\newtheorem{theorem}{Theorem}[section]
\newtheorem{lemma}{Lemma}[section]
\newtheorem{corollary}{Corollary}[section]
\theoremstyle{definition}
\newtheorem{remark}{Remark}[section]
\newtheorem{definition}{Definition}[section]
\numberwithin{equation}{section}
\def\beq{\begin{equation}}
\def\eeq{\end{equation}}
\def\ba{\begin{array}}
\def\ea{\end{array}}
\def\R{\mathbb R}
\theoremstyle{definition}
\theoremstyle{remark}
\begin{document}
\arraycolsep=1pt
\title{Anisotropic Moser-Trudinger inequality involving $L^{n}$ norm in the entire space $\mathbb{R}^{n}$
\footnotetext{\hspace{-0.35cm}
2010 {\it Mathematics Subject Classification}. 35A15; 46E35.
\endgraf {\it Key words and phrases}: Moser-Trudinger inequality; anisotropic Sobolev norm;
Blow up analysis; extremal function; unbounded domain.
\endgraf {\it E-mail}: xierl@ustc.edu.cn}}
\author{Rulong Xie\\ \small\small {School of Mathematical Sciences,
University of Science and Technology of China, Hefei 236000, China}
\small { }\vspace{0.3cm}}
\date{} \maketitle

\begin{abstract}
Let $F: \mathbb{R}^{n}\rightarrow [0,+\infty) $ be a convex function of class $C^{2}( \mathbb{R}^{n}\backslash\{0\})$ which is even and positively homogeneous of degree 1, and its polar $F^{0}$ represents a Finsler metric on $\mathbb{R}^{n}$. The anisotropic Sobolev norm in $W^{1,n}\left(\mathbb{R}^{n}\right)$ is defined by
\begin{equation*}
||u||_{F}=\left(\int_{\mathbb{R}^{n}}F^{n}(\nabla u)+|u|^{n}\right)^{\frac{1}{n}}.
\end{equation*} In this paper, the following sharp
 anisotropic Moser-Trudinger inequality involving $L^{n}$ norm
\[
\underset{u\in W^{1,n}(
\mathbb{R}^{n}),\left\Vert u\right\Vert _{F}\leq 1}{\sup}\int_{
\mathbb{R}
^{n}}\Phi\left( \lambda_{n}\left\vert u\right\vert ^{\frac{n}{n-1}}\left(
1+\alpha\left\Vert u\right\Vert _{n}^{n}\right)  ^{\frac{1}{n-1}}\right)
dx<+\infty
\]
in the entire space $\mathbb{R}^n$ for any $0\leq\alpha<1$ is established, where $\Phi\left(  t\right)  =e^{t}-\underset{j=0}{\overset{n-2}{\sum}}%
\frac{t^{j}}{j!}$, $\lambda_{n}=n^{\frac{n}{n-1}}\kappa_{n}^{\frac{1}{n-1}}$ and $\kappa_{n}$ is the volume of the unit Wulff ball
in $\mathbb{R}^n$. It is also shown that  the above supremum is infinity for all
$\alpha\geq1$. Moreover, we prove the supremum is attained, namely, there exists a maximizer for the above supremum when $\alpha>0$ is
sufficiently small. The proof of main results in this paper is based on the method of blow-up analysis.

\end{abstract}

\section{Introduction}

 Let $\Omega\subset \R^n $ be a smooth bounded domain. It is well known that $W_{0}^{1,n}(\Omega)$ is embedded into $L^{p}(\Omega)$ for any $p>1$.  Namely, using the Dirichlet norm $\|u\|_{W^{1,n}_0(\Omega)}=(\int_{\Omega}|\nabla u|^ndx)^{\frac 1n} $  on $W_{0}^{1,n}(\Omega)$, we have
  $$\sup_{u\in W_{0}^{1,n}(\Omega),||\nabla u||_{L^{n}(\Omega)}\leq1}\int_{\Omega}|u|^pdx<+\infty.$$
  But $W_{0}^{1,n}(\Omega)$ is not embedded into $ L^{\infty}(\Omega)$. Hence, many mathematical researchers would like to look for a function $g(s): \R \rightarrow \R^+ $ with maximal growth such that
  $$\sup_{u\in W_{0}^{1,n}(\Omega),||\nabla u||_{L^{n}(\Omega)}\leq1}\int_{\Omega}g(u)dx<+\infty.$$  The Moser-Trudinger inequality states that the maximal growth function is of exponential type, which was shown by Pohozhaev \cite{S2}, Trudinger \cite{Trudinger} and Moser \cite{moser}. This inequality says that
\begin{equation}\label{moser-tru}
\sup_{u\in W_{0}^{1,n}(\Omega),||\nabla u||_{L^{n}(\Omega)}\leq1}\int_{\Omega}e^{\alpha |u|^{\frac{n}{n-1}}}dx<+\infty
\end{equation}
for any $\alpha\leq n\omega_{n-1}^{\frac{1}{n-1}}:=\alpha_{n}$, where $\omega_{n-1}$ is the surface area of the unit ball in $\R^n$. Also the inequality is optimal, that is, for any $\alpha>\alpha_{n}$ there exists a sequence of $\{u_{\epsilon}\}$ in $W_{0}^{1,n}(\Omega)$ with  $||\nabla u_\epsilon||_{L^{n}(\Omega)}\leq 1 $ such that
\begin{equation*}
\int_{\Omega}e^{\alpha |u_{\epsilon}|^{\frac{n}{n-1}}}dx\rightarrow +\infty \qquad as~~\epsilon\rightarrow 0.
\end{equation*}

Whether extremal functions  exist or not is another interesting question about Moser-Trudinger inequality.  Carleson and Chang \cite{CC} firstly proved that the supremum is attained when $\Omega$ is a unit ball in $\R^{n}$. Then Struwe \cite{S} got the existence of extremals for $\Omega$ close to a ball. Struwe's technique was then used and extended by Flucher \cite{F} to $\Omega$ which is the more general bounded smooth domain in $\R^2$. Later, Lin \cite{lin} generalized the existence result to a bounded smooth domain in $\R^n$.

Numerous generalizations, extensions and applications of the Moser-Trudinger
inequality have been obtained due to important applications in partial differential equations and geometric analysis (see \cite{Adams}-\cite{Adimurthi}, \cite{J. M. do}-\cite{J. M. do2}, \cite{Figueiredo}-\cite{Malchiodi}, \cite{MM}, \cite{de}, \cite{yang1}-\cite{YZ1} and references therein). We recall in particular the famous concentration-compactness result
obtained by Lions \cite{lions}, which says that if $\left\{
u_{k}\right\}  $ is a sequence of functions in $W_{0}^{1,n}\left(
\Omega\right)  $\ with $\left\Vert \nabla u_{k}\right\Vert _{L^n(\Omega)}=1$
such that $u_{k}\rightharpoonup u$ weakly in $W^{1,n}\left(
\Omega\right)  $, then for any $0<p<\left( 1-\left\Vert \nabla
u\right\Vert
_{L^n(\Omega)}^{n}\right)  ^{-1/\left(  n-1\right)}$, it follows

\[
\underset{k\rightarrow \infty}{\sup}\int_{ \Omega}e^{\alpha_{n}p\left\vert u_{k}\right\vert
^{\frac{n}{n-1}}}dx<+\infty.
\]
 Based on the result of Lions
and the blowing up analysis method, Adimurthi and Druet
\cite{Adimurthi} obtained an improved Moser-Trudinger type
inequality in $\mathbb{R}
^{2}$ on bounded domains $\Omega$, which can be described as follows \[
\underset{\left\Vert \nabla u\right\Vert _{2}\leq1,u\in W_{0}
^{1,2}\left( \Omega\right)}{\sup}\int_{
\mathbb{R}
^{2}}e^{  4\pi\left\vert u\right\vert ^{2}\left(
1+\alpha\left\Vert u\right\Vert _{2}^{2}\right)}
dx<+\infty\ \text{if and only if }\alpha<\underset{ u\in W_{0}^{1,2}\left(
\Omega\right) ,u\neq0}{\inf}\frac{\left\Vert \nabla u\right\Vert
_{2}^{2}}{\left\Vert u\right\Vert _{2}^{2}}.
\]  Later, this result was extended to high dimension and $L^p$ norm in two dimension or high dimension in Yang \cite{yang1}, Lu-Yang \cite{lu-yang,lu-yang 1} and Zhu \cite{zhu}.

Related Moser-Trudinger inequalities for unbounded domains have been first considered by
Cao \cite{cao} in dimension two and for any dimension by
do \'{O} \cite{J. M. do2} and Adachi-Tanaka \cite{Adachi-Tanaka}.
In \cite{ruf}, Ruf showed that in the case of dimension two, one obtains that

\begin{equation}
\underset{\int_{\mathbb{R}
^{2}}\left(  \left\vert u\right\vert ^{2}+\left\vert \nabla
u\right\vert ^{2}\right)  dx\leq1,u\in W^{1,2}\left(
\mathbb{R}
^{2}\right)  }{\sup}\int_{
\mathbb{R}
^{2}}\phi\left(  \alpha\left\vert u\right\vert ^{2}\right)  dx<
+\infty\ \text{ if and only if }\alpha\leq4\pi,
\end{equation}
where $\phi\left(  t\right)  =e^{t}-1$. Li and Ruf
\cite{liruf} extended Ruf's result to arbitrary dimension.
Later, Souza and do \'{O} \cite{do1} obtained an Adimurthi-Druet type result in $\mathbb{R}^2$ for some weighted Sobolev space. Recently, Lu and Zhu \cite{LuZhu} proved a sharp Moser-Trudinger inequality involving
$L^{n}$ norm in $\mathbb{R}^{n}$.

The one interesting extension of (\ref{moser-tru}) is to establish anisotropic Moser-Trudinger inequality which involves $n$-anisotropic Laplacian (or $n$-Finsler Laplacian) $Q_{n}$ as follows:

\begin{equation*}
Q_{n} u:=\sum_{i=1}^{n}\frac{\partial}{\partial x_{i}}(F^{n-1}(\nabla u)F_{\xi_i}(\nabla u)).
\end{equation*}
Here the function $F(x)$ is positive, convex and homogeneous of degree $1$, and its polar $F^0(x)$ represents a Finsler metric on $\R^n$. The properties of the operator $Q_{n}$ was researched by Gong and the author of this paper in \cite{XG}.

In 2012,  Wang and  Xia \cite{WX} proved the following anisotropic Moser-Trudinger inequality
\begin{equation}\label{1-02}
\int_{\Omega}e^{\lambda |u|^{\frac{n}{n-1}}}dx\leq C(n)|\Omega|
\end{equation}
for all $u\in W_{0}^{1,n}(\Omega)$ and $\int_{\Omega}F^{n}(\nabla u)dx\leq 1$. Here $\lambda\leq\lambda_{n}:=n^{\frac{n}{n-1}}\kappa_n^{\frac{1}{n-1}}$, where $\kappa_{n}$ is the volume of the unit Wulff ball
in $\mathbb{R}^n$, i.e. $\kappa_{n}=|\{x\in \mathbb{R}^{n}: F^{0}(x)\leq 1\}|$. $\lambda_{n}$ is optimal in the sense that if $\lambda >\lambda_{n}$ one can find a sequence $\{u_{k}\}$ such that $\int_{\Omega}e^{\lambda |u_k|^{\frac{n}{n-1}}}dx$ diverges. Later, Zhou and Zhou \cite{ZZ,ZZ1} have shown that  the supremum is attained when $\Omega$  is bounded domain in  $\R^{n}$. Recently, Zhou \cite{Z1} obtained the anisotropic Moser-Trudinger inequality involving $L^{n}$ norm in a smooth bounded domain $\Omega\in\R^{n}$ and Liu \cite{Liu} extended the corresponding result to $L^{p}$ norm. On the unbounded domain in $\R^{n}$, Zhou and Zhou \cite{ZZ2} established the anisotropic Moser-Trudinger inequality.

In this paper, we will research the anisotropic Moser-Trudinger type inequality involving $L^{n}$ norm and its extremal functions in the entire space $\R^{n}$. The isotropic Dirichlet norm  $\|u\|_{W^{1,n}_0(\Omega)}=(\int_{\Omega}|\nabla u|^ndx)^{\frac 1n} $ will be replaced  by the anisotropic Dirichlet norm  $(\int_{\Omega}F^n(\nabla u)dx)^{\frac 1n} $  on $W_{0}^{1,n}(\Omega)$. Also, the isotropic Sobolev norm will be replaced by the anisotropic Sobolev norm

\begin{equation*}
||u||_{F}=\left(\int_{\R^{n}}F^{n}(\nabla u)+|u|^{n}\right)^{\frac{1}{n}}.
\end{equation*}
Now we stated the main results in this paper as follows.
\begin{theorem}
\label{moser-trudinger} For any $0\leq\alpha<1$, we have

\begin{equation}
\underset{u\in W^{1,n}(
\mathbb{R}^{n}),\left\Vert u\right\Vert _{F}\leq 1}{\sup}\int_{
\mathbb{R}
^{n}}\Phi\left(  \lambda_{n}\left\vert u\right\vert
^{\frac{n}{n-1}}\left( 1+\alpha\left\Vert u\right\Vert
_{n}^{n}\right)  ^{\frac{1}{n-1}}\right) dx<+\infty,
\label{moser-Trudi}\end{equation}
where $\Phi\left(  t\right)  =e^{t}-\underset{j=0}{\overset{n-2}{\sum}}%
\frac{t^{j}}{j!}$. Moreover,   for any $\alpha\geq1,$ the
supremum is infinite.
\end{theorem}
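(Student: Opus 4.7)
My plan is to reduce the problem to the bounded-domain anisotropic Adimurthi--Druet inequality (the Zhou/Liu-type result already in the literature) via convex symmetrization and a domain-splitting argument, and then construct an explicit Moser-type concentrating sequence to prove sharpness at $\alpha\geq 1$.

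\textbf{Finiteness for $0\le\alpha<1$.} First I would apply the anisotropic (convex) symmetrization with respect to the Wulff ball, which by the results of Alvino--Ferone--Trombetti-type preserves $\|u\|_{n}$ and does not increase $\int_{\R^n} F^{n}(\nabla u)$, while leaving the functional $\int \Phi(\lambda_n|u|^{n/(n-1)}(1+\alpha\|u\|_n^n)^{1/(n-1)})$ unchanged by equimeasurability. Thus I may assume $u$ is Wulff-radial and decreasing, in which case the monotonicity together with $\|u\|_n\le 1$ yields the pointwise decay $|u(x)|\lesssim F^0(x)^{-1}$. Next, fix a large Wulff radius $R$ and split $\int_{\R^n}=\int_{B_R}+\int_{\R^n\setminus B_R}$, where $B_R=\{F^0(x)\le R\}$. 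On the exterior piece, the pointwise decay makes the exponent uniformly small, so only the leading Taylor term $\Phi(t)\sim t^{n-1}/(n-1)!$ matters, and the integral is controlled by $\|u\|_n^n\le 1$ up to a constant depending only on $R,\alpha,n$.

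For the interior piece I consider the truncation $w=(u-u(R))_+\in W_{0}^{1,n}(B_R)$. Setting $a=\|u\|_n^n$ and $b=\int F^n(\nabla u)\,dx$, one has $a+b\le 1$ and $\int_{B_R} F^n(\nabla w)\,dx\le b\le 1-a$. Using the elementary inequality $(A+B)^{n/(n-1)}\le A^{n/(n-1)}+\varepsilon A^{n/(n-1)}+C_\varepsilon B^{n/(n-1)}$ to separate $u=w+u(R)$ and recalling that $u(R)$ is small, the exponent in $\Phi$ is bounded by
\[
\lambda_n(1+\varepsilon)\,w^{n/(n-1)}(1+\alpha a)^{1/(n-1)}+C_\varepsilon.
\]
Rescaling to $v=w/(1-a)^{1/n}$ gives $\int_{B_R} F^n(\nabla v)\,dx\le 1$ and the exponent becomes $\lambda_n(1+\varepsilon)[(1-a)(1+\alpha a)]^{1/(n-1)}|v|^{n/(n-1)}$. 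The algebraic identity
\[
(1-a)(1+\alpha a)=1-(1-\alpha)a-\alpha a^{2}\le 1\qquad\text{for all }a\in[0,1],\ \alpha<1,
\]
is what opens up a little room; choosing $\varepsilon$ small enough keeps the coefficient $\le\lambda_n$ and allows me to invoke the sharp bounded-domain anisotropic Moser--Trudinger inequality of Wang--Xia (\ref{1-02}) on $B_R$ to finish.

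\textbf{Sharpness for $\alpha\ge 1$.} I would take a Moser sequence adapted to the Wulff geometry, roughly
\[
M_k(x)=\frac{1}{\kappa_n^{1/n}}\begin{cases}(\log k)^{(n-1)/n},& F^0(x)\le 1/k,\\ (\log k)^{-1/n}\log\bigl(1/F^0(x)\bigr),& 1/k\le F^0(x)\le 1,\\ 0,& F^0(x)\ge 1,\end{cases}
\]
extended by a small smooth bump so that $M_k\in W^{1,n}(\R^n)$ and a small perturbation to enforce $\|M_k\|_F=1$. A direct computation (analogous to the Euclidean Adimurthi--Druet case, since area and co-area factors reduce to Wulff-ball volumes) gives $\int F^n(\nabla M_k)\,dx=1-c_k$ and $\|M_k\|_n^n=c_k$ with $c_k\to 0$ at an explicit rate $\sim(\log k)^{-1}$. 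Evaluating the exponent on the inner Wulff ball $\{F^0\le 1/k\}$ of volume $\kappa_n k^{-n}$ yields a contribution of order $k^{n\{\alpha c_k+o(c_k)\}}$, which diverges as $k\to\infty$ precisely when $\alpha\ge 1$.

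\textbf{Main obstacle.} The most delicate point is the bookkeeping in the interior step: the replacement $u\rightsquigarrow w$, the use of $(A+B)^{n/(n-1)}$-type inequalities, and the renormalization must each be done with errors small enough to preserve the sharp constant $\lambda_n$ and not waste the slack produced by the key identity $(1-a)(1+\alpha a)\le 1$. In particular I will need a careful uniform decay bound for Wulff-symmetric decreasing functions (to control $u(R)$) and a version of the bounded-domain anisotropic Moser--Trudinger inequality that is uniform in the radius $R$ through its dependence only on $|B_R|$.
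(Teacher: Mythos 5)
The proposal has two genuine gaps, and they are not technicalities; they touch the core reasons why the paper needs its full machinery.

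\textbf{Gap in the finiteness argument.} Your elementary inequality $(A+B)^{n/(n-1)}\le(1+\varepsilon)A^{n/(n-1)}+C_\varepsilon B^{n/(n-1)}$ carries a fixed $\varepsilon>0$, but the slack produced by $(1-a)(1+\alpha a)\le 1$ degenerates as $a\to 0$: indeed $(1-a)(1+\alpha a)=1-(1-\alpha)a-\alpha a^2\to 1$. Hence for any fixed $\varepsilon>0$ there exist admissible $u$ (with $\|u\|_n^n=a$ small) for which $(1+\varepsilon)^{\,n-1}(1-a)(1+\alpha a)>1$, so your rescaled exponent exceeds $\lambda_n$ and the Wang--Xia bound on $B_R$ cannot be invoked. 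There is no way to choose $\varepsilon$ uniformly in $a$. The point is that handling the cross term $u(R)$ requires the precise asymptotic $c_k^{1/(n-1)}u_k\to G_\alpha$: then $C\,(u_k-u_k(L))^{1/(n-1)}u_k(L)\le C\,c_k^{1/(n-1)}u_k(L)=C\,G_\alpha(L)+o(1)$ is bounded with no $\varepsilon$ loss whatsoever. That Green-function convergence is exactly the content of the blow-up analysis in Section 4.2.2, and there is no elementary substitute for it.

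\textbf{Gap in the sharpness argument.} After you normalize so that $\|M_k\|_F=1$, the value of $M_k$ on the inner Wulff ball is multiplied by $(1-c_k)^{1/n}$ (equivalently divided by $(1+a_k)^{1/n}$ where $a_k$ is the unnormalized $\|\cdot\|_n^n$). The exponent on the inner ball then becomes
\[
n\log k\cdot\bigl[(1-c_k)(1+\alpha c_k)\bigr]^{1/(n-1)}=n\log k\left(1+\tfrac{(\alpha-1)c_k}{n-1}+O(c_k^2)\right),
\]
and the contribution of the inner ball is of size
\[
\kappa_n k^{-n}\exp\!\left(n\log k\left(1+\tfrac{(\alpha-1)c_k}{n-1}+O(c_k^2)\right)\right)=\kappa_n\exp\!\left(\tfrac{n(\alpha-1)}{n-1}c_k\log k+O(c_k^2\log k)\right).
\]
With $c_k\sim(\log k)^{-1}$ this stays bounded for every $\alpha$, including $\alpha\ge 1$: the loss from normalization exactly cancels the gain $1+\alpha c_k$ to linear order when $\alpha=1$, and for $\alpha>1$ the surviving $(\alpha-1)c_k\log k$ is $O(1)$. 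Your claimed rate $k^{n\alpha c_k}$ forgets the $(1-c_k)$ factor. The paper avoids this by taking a growing support radius $R_k=(\log k)^{1/(2n)}/\log\log k$; then $\|u_k\|_n^n\sim R_k^n/\log k$ decays more slowly, the volume of the inner ball contributes an extra $R_k^n\to\infty$ factor, and it is this polynomial factor — not the exponential — that produces the divergence. A fixed-domain Moser sequence cannot do it, which is precisely why the paper emphasizes the need for a new test function in the unbounded setting.

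In short, the strategy of reducing to the bounded-domain anisotropic Adimurthi--Druet inequality via symmetrization and truncation is a reasonable first instinct, but the $\varepsilon$-loss is fatal precisely in the borderline regime that the theorem is about, and the sharpness construction needs the growing support. Both obstructions are structural, not bookkeeping.
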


Set
\[S=\underset{u\in W^{1,n}(
\mathbb{R}^{n}),\left\Vert u\right\Vert _{F}\leq 1}{\sup}\int_{
\mathbb{R}
^{n}}\Phi\left(  \lambda_{n}\left\vert u\right\vert ^{\frac{n}{n-1}}\left(
1+\alpha\left\Vert u\right\Vert _{n}^{n}\right)  ^{\frac{1}{n-1}}\right)  dx.
\]

\begin{theorem}
\label{attain}There exists $u_{\alpha}\in W^{1,n}\left(
\mathbb{R}
^{n}\right)  $ with $\left\Vert u_{\alpha}\right\Vert _{F}=1$ such that
\[
S=\int_{\mathbb{R}
^{n}}\Phi\left(  \lambda_{n}\left\vert u_{\alpha}\right\vert ^{\frac{n}{n-1}%
}\left(  1+\alpha\left\Vert u_{\alpha}\right\Vert _{n}^{n}\right)  ^{\frac
{1}{n-1}}\right)  dx
\]
for sufficiently small $\alpha$.
\end{theorem}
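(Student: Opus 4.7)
The plan is to follow the variational-plus-blow-up scheme of Carleson-Chang, Lin, Li-Ruf and Lu-Zhu, adapted to the anisotropic Finsler setting with the $L^n$-perturbation. For each integer $k\ge 2$ I would first consider the subcritical supremum $S_k$ obtained by replacing $\lambda_n$ with $\lambda_n(1-1/k)$ in $S$; Theorem~\ref{moser-trudinger} together with the strict subcriticality makes the truncated functional weakly upper-semicontinuous on $\{\|u\|_F\le 1\}$, so the direct method produces a maximizer $u_k$ with $\|u_k\|_F=1$. After anisotropic (Wulff) Schwarz symmetrization with respect to $F^{0}$, $u_k$ may be taken nonnegative and Wulff-symmetric about the origin, and satisfies an Euler-Lagrange equation involving $Q_n u_k$ together with multipliers encoding the $L^n$-perturbation. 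Passing to a subsequence with $u_k\rightharpoonup u_0$ weakly in $W^{1,n}(\R^n)$ and strongly in $L^p_{\mathrm{loc}}$, I face the usual dichotomy: either $u_0\not\equiv 0$, in which case the anisotropic version of Lions' concentration-compactness together with $S_k\nearrow S$ let me pass to the limit in the functional and identify $u_0$ (after normalizing $\|u_0\|_F=1$) as an extremal; or $u_0\equiv 0$ and concentration occurs, with $c_k:=\max u_k=u_k(0)\to\infty$ and $F^n(\nabla u_k)\,dx$ tending to a unit Dirac mass at the origin.

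In the concentration case I would run a blow-up analysis: choose $r_k\to 0$ with $\lambda_n r_k^n c_k^{n/(n-1)}=1$ and study $v_k(y)=c_k^{1/(n-1)}\bigl(u_k(r_k y)-c_k\bigr)\cdot(n/\lambda_n^{1/(n-1)})$. Using the anisotropic Liouville classification available from the work of Wang-Xia and Zhou-Zhou, $v_k$ converges locally uniformly on $\R^n$ to the explicit Wulff bubble $v_0(y)=-\log\bigl(1+c_n\,(F^{0}(y))^{n/(n-1)}\bigr)$. Matching this interior bubble with the exterior Green-type contribution (the suitably rescaled limit of $u_k/c_k$ away from the origin has a logarithmic singularity governed by $Q_n$) yields the Carleson-Chang-type upper bound
\[
S\le \Lambda_n(\alpha):=\frac{\kappa_n}{(n-1)!}\,e^{\,1+\frac{1}{2}+\cdots+\frac{1}{n-1}}+R(\alpha),
\]
where $R(\alpha)$ depends continuously on $\alpha$ and vanishes at $\alpha=0$. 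To contradict this I would construct an anisotropic Moser test sequence $M_\epsilon$ with $\|M_\epsilon\|_F=1$, built as a rescaled Wulff bubble on $\{F^{0}(x)\le\epsilon\}$, a logarithmic transition on $\epsilon\le F^{0}(x)\le 1$, and a controlled exponentially-decaying tail keeping $\|M_\epsilon\|_n$ bounded below by a positive constant. The expansion $(1+\alpha\|M_\epsilon\|_n^n)^{1/(n-1)}=1+\frac{\alpha}{n-1}\|M_\epsilon\|_n^n+O(\alpha^2)$ gives the exponent a strictly positive linear-in-$\alpha$ boost; since the $\alpha=0$ computation saturates $\Lambda_n(0)$ exactly (the classical Li-Ruf value), for $\alpha>0$ small enough one obtains $S\ge\Lambda_n(\alpha)+C\alpha+o(\alpha)$ with $C>0$, strictly beating the upper bound. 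This rules out the second branch of the dichotomy, so $u_0\not\equiv 0$ and the first branch produces the desired extremal.

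The main obstacle is the blow-up step. Two difficulties are specific to this problem: carrying out the analysis inside a Finsler geometry (Wulff balls replace Euclidean balls in both the interior bubble and the exterior Green expansion, and the regularity theory for $Q_n$ is weaker than for the standard $n$-Laplacian, so elliptic estimates up to the concentration point must be handled with care), and tracking the $L^n$-mass $\|u_k\|_n^n$ along the concentrating sequence. The latter enters the exponent multiplicatively through the $(1+\alpha\|u\|_n^n)^{1/(n-1)}$ factor, and I must show it has a strictly positive limit along the sequence so that the bubble contribution and the exterior contribution together can be compared sharply to the test-function value. Once these delicate points are under control, the test-function computation and the continuous dependence on $\alpha$ of both bounds are routine and yield the conclusion for all sufficiently small $\alpha>0$.
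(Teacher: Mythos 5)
Your overall scheme—subcritical maximizers, Wulff symmetrization, the dichotomy between compactness and concentration, blow-up analysis yielding a Carleson--Chang type upper bound, and an explicit test function to contradict it—does match the architecture of the paper's proof (Section 5, split into the cases $\sup_k c_k<+\infty$ and $\sup_k c_k=+\infty$). However, there are two genuine gaps in the part that actually closes the argument, both in the concentrating case.

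First, the mechanism you propose for the final contradiction is wrong. You claim the exponent acquires a ``strictly positive linear-in-$\alpha$ boost'' from $(1+\alpha\|M_\epsilon\|_n^n)^{1/(n-1)}$ and that the test-function value therefore beats the upper bound by $C\alpha+o(\alpha)$. But the paper's upper bound is $\kappa_n\exp\{\lambda_n A(\alpha)+1+\tfrac12+\cdots+\tfrac{1}{n-1}\}$ where $A(\alpha)$ is the constant in the near-origin expansion of the $\alpha$-dependent Green function $G_\alpha$ satisfying $-Q_n G_\alpha=\delta_0+(\alpha-1)G_\alpha^{n-1}$; the linear-in-$\alpha$ contribution of the $L^n$-term is absorbed into $A(\alpha)$ and into the renormalization $\tau_k$, so it shows up identically in both the upper bound and the test-function lower bound and cancels in their difference. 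The actual strict excess comes from the $L^n$-tail term $\lambda_n^{n-1}\|G_\alpha\|_n^n/\bigl((n-1)!\,C^{n/(n-1)}\bigr)$ contributed by integrating $\Phi$ over $\mathbb{R}^n\setminus\mathcal{W}_{R\varepsilon}$, which is positive and is present already at $\alpha=0$ (this is exactly the Li--Ruf mechanism). The restriction to small $\alpha$ is needed because the expansion of $(1+\alpha\|u_\varepsilon\|_n^n)^{1/(n-1)}$ introduces a \emph{negative} correction of order $\alpha^2\|G_\alpha\|_n^{2n}/C^{n/(n-1)}$; for $\alpha$ small enough the positive tail term dominates. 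Your reading of the roles of $\alpha$ and the tail is essentially reversed.

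Second, and relatedly, your test function is designed incorrectly. You insist on ``a controlled exponentially-decaying tail keeping $\|M_\epsilon\|_n$ bounded below by a positive constant.'' Under the constraint $\|M_\epsilon\|_F=1$, keeping $\|M_\epsilon\|_n$ bounded below forces $\|F(\nabla M_\epsilon)\|_n^n\leq 1-c<1$, and then the bubble cannot saturate the Carleson--Chang constant at all—the very concentration you need is blocked. In the paper's construction (Section 5.2) the test function has $\|u_\varepsilon\|_n^n\sim\|G_\alpha\|_n^n/C^{n/(n-1)}\to 0$, not bounded below, and its tail is exactly the rescaled Green function $G_\alpha/(C^{n/(n-1)}+\alpha\|G_\alpha\|_n^n)^{1/n}$, not a generic log-plus-exponential profile. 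This is crucial: the constant $A(\alpha)$ in the upper bound is intrinsic to $G_\alpha$, so the lower bound can match it to leading order only if the tail \emph{is} $G_\alpha$. With an ad-hoc tail there is no reason the resulting constant exceeds $\kappa_n e^{\lambda_n A(\alpha)-E}$. (Smaller issues: the blow-up radius should scale like $r_k^n\sim\lambda_k/(\mu_k c_k^{n/(n-1)}e^{\alpha_k c_k^{n/(n-1)}})$, not $\lambda_n r_k^n c_k^{n/(n-1)}=1$; and the exterior limit object is $c_k^{1/(n-1)}u_k\to G_\alpha$, not a rescaling of $u_k/c_k$.)
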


\medskip

This paper is organized as follows. In Section 2 we recall some notations and preliminaries which will be use later.
Section 3 is devoted to proving the existence of radially symmetric maximizing sequence for the critical
functional. In Section 4 we give the proof of Theorem 1.1. We prove the sharpness
of the inequality in Theorem \ref{moser-trudinger}, i.e. the second part of Therorem \ref{moser-trudinger} by constructing a
appropriate test function sequence in Subsection 4.1. In Subsection 4.2, we prove the first part of Theorem \ref{moser-trudinger} by considering the two cases. In Subsection 4.2.1, we prove the first part of Theorem \ref{moser-trudinger} in the case of $sup_{k}c_{k}<+\infty$. The proof in the case of $sup_{k}c_{k}=+\infty$ is arranged in Subsection 4.2.2, we apply the blowing up analysis to
analyze the asymptotic behavior of the maximizing sequence near and
far away from the origin, and give the proof of the first part of
Theorem \ref{moser-trudinger} in this case. In Section 5, we also prove Theorem \ref{attain} by considering the two cases, which are $sup_{k}c_{k}<+\infty$ and $sup_{k}c_{k}=+\infty$. In Subsection 5.1, based on the concentration-compactness lemma, we give the proof of Theorem \ref{attain}  in the case of $sup_{k}c_{k}<+\infty$. In Subsection 5.2, we prove the result in case of $sup_{k}c_{k}=+\infty$ by contradiction.
For this, we first establish the upper bound for critical functional when $\sup_{k}c_{k}=+\infty$, and then construct an explicit test function, which provides a lower bound for the
supremum of our Moser-Trudinger inequality. Because this lower bound equals to the upper bound, one can obtain the contradiction and prove Theorem \ref{attain} in this case.

Throughout this paper, the letter $C$ denotes a constant independent of the main functions which may
be different from line to line.

\section{Notations and preliminaries }
In this section, let us recall some important concepts and preliminaries which will be use later in this paper.

Throughout this paper, let  $F :\R^n\mapsto \R$  be a nonnegative convex function of class $C^{2}(\R^n\backslash\{0\})$  which is even and positively homogenous of degree  $1$,  i.e.
$$F(t\xi)=|t|F(\xi)\qquad \text{for any}\qquad t\in \R,~~~~\xi\in \R^n.$$
A typical example is  $F(\xi)=(\sum_{i}|\xi_i|^{q})^{\frac{1}{q}}$  for  $q\in [1,\infty)$.  We further assume that
$$F(\xi)>0\qquad \text{for any}\qquad\xi\neq 0.$$

With the help of homogeneity of  $F$,  there exist two constants  $0<a\leq b<\infty$  such that
\begin{equation*}
a|\xi|\leq F(\xi)\leq b|\xi|.
\end{equation*}
 Usually, we shall assume that the $Hess(F^{2})$ is positive definite in  $\R^n\backslash\{0\}$. Then by Gong and the author of this paper in \cite{XG}, $Hess(F^{n})$ is also positive definite in  $\R^n\backslash\{0\}$.
Considering the minimization problem
$$
\min_{u\in W^{1,n}_0(\Omega)}\int_{\Omega}F^n(\nabla u)dx,
$$
its Euler-Lagrange equation contains an  operator of the form
$$
Q_{n}u:=\sum_{i=1}^{n}\frac{\partial}{\partial x_i}(F^{n-1}(\nabla u)F_{\xi_i}(\nabla u)),
$$ which is called as n-anisotropic Laplacian or n-Finsler Laplacian.

 It is well known that in the isotropic case, i.e. $F(\xi) = |\xi|$, when $n=2$, $Q_n$ is the ordinary Laplacian operator; when $n>2$, $Q_n$ is the $n$-Laplacian operator. In the anisotropic case, when $n=2$, $Q_{n}$ is  anisotropic Laplacian operator. The operator $Q_{n}$ was studied by many researchers, see \cite{FK,WX,AVP,BFK,XG} and their references therein.

Consider the map
$$\phi:S^{n-1}\rightarrow \R^n, ~~~~~ \phi(\xi)=F_{\xi}(\xi).$$
Its image  $\phi(S^{n-1})$  is a smooth, convex hypersurface in  $\R^n$, which is called Wulff shape of  $F$.
Let  $F^{o}$  be the support function of  $K:=\{x\in \R^n:F(x)\leq 1\}$,  which is defined by
$$F^{o}(x):=\sup_{\xi\in K}\langle x,\xi\rangle.$$
It is easy to prove that  $F^{o}:\R^n\mapsto [0,+\infty)$  is also a convex, homogeneous function of class of $C^{2}(\R^n\backslash\{0\})$.  Actually  $F, F^{0}$ are polar to each other in the sense that
$$F^{o}(x)=\sup_{\xi\neq 0}\frac{\langle x,\xi\rangle}{F(\xi)},\qquad F(x)=\sup_{\xi\neq 0}\frac{\langle x,\xi\rangle}{F^{o}(\xi)}.$$
One can see easily that  $\phi(S^{n-1})=\{x\in \R^n~|F^{o}(x)=1\}$. Let $\mathcal{W}_{F}:=\{x\in \mathbb{R}^{n}: F^{0}(x)\leq 1\}$ and $\kappa_{n}=|\mathcal{W}_{F}|$, which is the Lebesgue measure of $\mathcal{W}_{F}$. Also, denote $\mathcal{W}_{r}(x_{0})$ by the Wulff ball of center at $x_{0}$ with radius $r$, i.e. $\mathcal{W}_{r}(x_{0})=\{x\in \mathbb{R}^{n}: F^{0}(x-x_{0})\leq r\}$.

 Next, we summarize the properties on $F$ and $F^{0}$, which can be proved easily by the assumption on $F$, also see \cite{WX1,FK,BP}.

\begin{lemma}\label{2-01}
We have
\begin{enumerate}
\item[(i)]$|F(x)-F(y)|\leq F(x+y)\leq F(x)+F(y)$;
\item[(ii)]$ \frac{1}{C}\leq|\nabla F(x)|\leq C $, and $ \frac{1}{C}\leq|\nabla F^{o}(x)|\leq C $  for some $C>0$ and any $x\neq 0$;
\item[(iii)]$\langle \xi,\nabla F(\xi)\rangle=F(\xi),\langle x,\nabla F^{o}(x)\rangle=F^{o}(x)$ for any $x\neq 0,\ \xi\neq 0$;
\item[(iv)]$F(\nabla F^{o}(x))=1$, $F^{o}(\nabla F(\xi))=1$ for any $x\neq 0,\ \xi\neq 0$;
\item[(v)]$F^{o}(x) F_{\xi}(\nabla F^{o}(x))=x $ for any $ x\neq 0$;
\item[(vi)]$F_\xi(t\xi)= \text{sgn}(t)F_{\xi}(\xi)$ for any $\xi\neq 0$ and $t\neq 0$.
\end{enumerate}
\end{lemma}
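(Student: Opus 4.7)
The plan is to tackle the six items in three groups of increasing depth: the algebraic/homogeneity items (i) and (vi), the Euler identity (iii) with its consequence (ii), and the polar duality items (iv) and (v).

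I would begin with (i) and (vi), which follow immediately from convexity together with the $1$-homogeneity relation $F(t\xi)=|t|F(\xi)$ and evenness. The triangle inequality comes from applying convexity to $F((x+y)/2)$ and clearing the factor of $2$; the reverse triangle inequality from $F(x)=F((x+y)+(-y))$ combined with $F(-y)=F(y)$. For (vi), I would differentiate $F(t\xi)=|t|F(\xi)$ in $\xi$ to obtain $tF_{\xi_i}(t\xi)=|t|F_{\xi_i}(\xi)$ and solve.

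Next I would establish (iii) as the standard Euler identity for $1$-homogeneous functions, obtained by differentiating $F(t\xi)=tF(\xi)$ (for $t>0$) in $t$ at $t=1$; the statement for $F^o$ is identical since $F^o$ is also $1$-homogeneous and $C^2$ away from the origin. Item (ii) then reduces to a compactness argument: since (vi) shows $\nabla F$ is $0$-homogeneous, it suffices to bound $|\nabla F|$ on $S^{n-1}$. The upper bound comes from continuity on the compact sphere, while (iii) combined with the bilateral estimate $a|\xi|\le F(\xi)\le b|\xi|$ yields $|\nabla F(\xi)|\ge F(\xi)/|\xi|\ge a$ for $|\xi|=1$. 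The same argument applies verbatim to $\nabla F^o$.

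The heart of the lemma is (iv)--(v), which encode the Legendre-type duality between $F$ and $F^o$. For (iv), the plan is to start from the convexity subgradient inequality $F(\eta)\ge F(\xi)+\langle \nabla F(\xi),\eta-\xi\rangle$ and insert (iii) to collapse it to $F(\eta)\ge \langle \nabla F(\xi),\eta\rangle$; the supremum defining $F^o(\nabla F(\xi))$ is therefore bounded by $1$ and attained at $\eta=\xi$, giving $F^o(\nabla F(\xi))=1$. The symmetric identity $F(\nabla F^o(x))=1$ follows by applying the same reasoning to $F^o$ together with $F^{oo}=F$. For (v), I would introduce the auxiliary function $f(\eta)=F(\eta)-\langle\eta,x\rangle/F^o(x)$: by the definition of $F^o$ as a supremum, $f\ge 0$, while $f(\nabla F^o(x))=0$ by (iii) and (iv). Hence $\nabla F^o(x)$ is a minimizer of $f$, and the first-order condition $\nabla F(\nabla F^o(x))=x/F^o(x)$ is exactly (v). The only delicate ingredient is that $F^o$ inherits from $F$ the full package of convexity, $1$-homogeneity, $C^2$ regularity away from the origin, and the polarity $F^{oo}=F$ used symmetrically in (iv); these are classical facts under the strict convexity assumption $\mathrm{Hess}(F^2)>0$, so I would simply cite the Finsler-geometry references rather than reprove them.
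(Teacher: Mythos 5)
The paper does not actually prove this lemma: it is presented as a collection of standard facts ``which can be proved easily by the assumption on $F$,'' with a pointer to the references \cite{WX1,FK,BP}. So there is no in-paper argument to compare against; your proposal supplies a proof the paper omits.

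That said, your proof is correct and is essentially the standard one found in the cited Finsler-geometry literature. The grouping is sensible: (i) and (vi) from homogeneity and convexity directly; (iii) as Euler's identity; (ii) by $0$-homogeneity of $\nabla F$ plus compactness of $S^{n-1}$ and the lower bound $|\nabla F(\xi)|\ge F(\xi)/|\xi|\ge a$ coming from (iii) and Cauchy--Schwarz; (iv) from the subgradient inequality $F(\eta)\ge F(\xi)+\langle\nabla F(\xi),\eta-\xi\rangle$ collapsed via (iii) to $F(\eta)\ge\langle\nabla F(\xi),\eta\rangle$, with equality at $\eta=\xi$; and (v) from the first-order condition at the interior global minimum $\eta=\nabla F^o(x)$ of $f(\eta)=F(\eta)-\langle\eta,x\rangle/F^o(x)$, where $f\ge 0$ by the definition of $F^o$ and $f(\nabla F^o(x))=0$ by (iii) and (iv). One small point worth making explicit in (v): you need $\nabla F^o(x)\neq 0$ (which you have from (ii)) so that the minimizer sits in the region where $F$ is $C^1$ and the stationarity condition $\nabla F(\nabla F^o(x))=x/F^o(x)$ is legitimate; note also that $\eta=0$ is a competing minimizer of $f$, so the content of the argument is precisely that there is a \emph{nonzero} minimizer at which one can differentiate. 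You are also right to flag, rather than reprove, the regularity and polarity package for $F^o$ (convexity, $1$-homogeneity, $C^2$ away from $0$, $F^{oo}=F$) under the hypothesis that $\mathrm{Hess}(F^2)$ is positive definite --- these are exactly the facts the paper itself treats as background.
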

Next we give the co-area formula and isoperimetric inequality in the anisotropic situation.

For a bounded domain $\Omega\subset \mathbb{R}^{n}$ and a function of bounded variation $u\in BV(\Omega)$, denote the anisotropic bounded variation of $u$ with respect to $F$ by
$$\int_{\Omega}|\nabla u|_{F}=\sup\left\{\int_{\Omega}u\text{div}\sigma dx: \sigma \in C_{0}^{1}(\Omega),F^{0}(\sigma)\leq 1\right\},$$
and anisotropic perimeter of $E$  with respect to $F$ by
$$P_{F}(E):=\int_{\Omega}|\nabla \chi_{E}|_{F},$$
 where $E$ is a subset of $\Omega$ and $\chi_{E}$ is the characeristic function of $E$.
The co-area formula and isoperimetric inequality can be expressed by
\begin{equation}\label{2-02}
\int_{\Omega}|\nabla u|_{F}=\int_{0}^{\infty}P_{F}(|u|>t)dt,
\end{equation}
and
\begin{equation}\label{2-03}
P_{F}(E)\geq Nk^{\frac{1}{N}}|E|^{1-\frac{1}{N}}
\end{equation}
respectively.
Moreover, the equality in (\ref{2-03}) holds if and only if  $E$  is a Wulff ball.

In the sequel, we will use the convex symmetrization with respect to $F$. The convex symmetrization generalizes the Schwarz symmetrization (see \cite{T3}). It was defined in \cite{AVP} and will be an essential tool for establishing  the Lions type concentration-compactness theorem under the anisotropic Dirichlet norm.
Let us consider a measurable function  $u$  on  $\Omega\subset \R^n$.  The one dimensional decreasing rearrangement of $u$  is defined as
$$u^{*}=\sup\{s\geq 0: |\{x\in\Omega:|u(x)|>s\}|>t\},\qquad \text{for} \quad  t\in \R.$$
The convex symmetrization of  $u$  with respect to  $F$  is
$$u^{\star}(x)=u^{*}(\kappa_{n} F^{o}(x)^{n}),\qquad\text{ for } x\in \Omega^{*}.$$
Here  $\kappa_{n} F^{o}(x)^{n}$  is just Lebesgue measure of a homothetic Wulff ball with radius $F^{0}(x)$ and $\Omega^{*}$  is the homothetic Wulff ball centered at the origin having the same measure as $\Omega$. Throughout this paper, we assume that $\Omega$ is bounded smooth domain in $\mathbb{R}^{n}$ with $n\geq 2$.

Now let us recall some important results which can be found in \cite{ZZ1,ZZ2}.  Lemma \ref{jmdo} is also called the concentration-compactness lemma.
\begin{lemma}\label{lem2.2}
Assume that $u\in W_{0}^{1,n}(\Omega)$ is a solution of the equation
\begin{equation}\label{2-05}
-Q_{n}(u)=f.
\end{equation}
If $f\in L^{q}(\Omega)$ for some $q>1$, then $||u||_{L^{\infty}(\Omega)}\leq C||f||_{L^{q}(\Omega)}^{\frac{1}{n-1}}$.
\end{lemma}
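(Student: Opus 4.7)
The statement is the classical $L^{\infty}$--regularity estimate for the anisotropic $n$--Laplacian, and I would prove it by transcribing the De Giorgi--Stampacchia truncation scheme into the anisotropic setting. Only two ingredients feel the anisotropy: Euler's identity $F_{\xi}(\xi)\cdot\xi=F(\xi)$ from Lemma \ref{2-01}(iii), which converts the bilinear form $F^{n-1}(\nabla u)F_{\xi}(\nabla u)\cdot\nabla\varphi$ into $F^{n}(\nabla u)$ when $\varphi=u$, and the anisotropic Sobolev inequality implied by the co--area formula (\ref{2-02}) together with the isoperimetric inequality (\ref{2-03}); everything else is a direct translation of the Euclidean argument. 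After splitting $u=u_{+}-u_{-}$ and applying the same estimate to each part, I may assume $u\ge 0$.

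\textbf{Level--set estimate.} For $k>0$ set $A(k)=\{u>k\}$ and test the weak form of $-Q_{n}u=f$ with $v_{k}=(u-k)_{+}\in W_{0}^{1,n}(\Omega)$. By Euler's identity one gets
\[
\int_{A(k)}F^{n}(\nabla u)\,dx \;=\; \int_{A(k)}f\,(u-k)\,dx.
\]
For the conjugate exponent $q'$ of $q$, choose $p\in(1,n)$ with $p^{\ast}=np/(n-p)=q'$, so that $\tfrac{1}{p}-\tfrac{1}{n}=\tfrac{1}{q'}$. The anisotropic Sobolev inequality (a standard consequence of (\ref{2-02})--(\ref{2-03})) gives $\|v_{k}\|_{L^{q'}}\le C\bigl(\int F^{p}(\nabla v_{k})\bigr)^{1/p}$, and Hölder upgrades the right--hand side: $\int_{A(k)}F^{p}(\nabla u)\le |A(k)|^{1-p/n}\bigl(\int_{A(k)}F^{n}(\nabla u)\bigr)^{p/n}$. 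Combining with Hölder on the $f$--integral and solving the resulting inequality for $X:=\int_{A(k)}F^{n}(\nabla u)$ yields
\[
\|v_{k}\|_{L^{q'}(A(k))} \;\le\; C\,\|f\|_{L^{q}}^{1/(n-1)}\,|A(k)|^{\frac{n(q-1)}{q(n-1)}}.
\]
Finally, for any $h>k>0$ the Chebyshev--type bound $(h-k)|A(h)|\le \int_{A(k)}v_{k}\,dx\le \|v_{k}\|_{L^{q'}}|A(k)|^{1/q}$ gives
\[
|A(h)|\;\le\;\frac{C\,\|f\|_{L^{q}}^{1/(n-1)}}{h-k}\,|A(k)|^{\beta},\qquad \beta=\frac{nq-1}{q(n-1)}.
\]

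\textbf{Conclusion and main obstacle.} A direct arithmetic check shows $\beta>1$ iff $q>1$, so the hypothesis on $q$ is exactly what makes the Stampacchia iteration lemma (a non--increasing $\phi$ with $\phi(h)\le C(h-k)^{-1}\phi(k)^{\beta}$ and $\beta>1$ vanishes past a finite level) applicable. Iterating with $\phi(k)=|A(k)|$ from $k=0$ produces $k_{0}\le C\,|\Omega|^{\beta-1}\|f\|_{L^{q}}^{1/(n-1)}$ with $|A(k_{0})|=0$, yielding $\|u\|_{L^{\infty}}\le C\|f\|_{L^{q}}^{1/(n-1)}$. The only nontrivial point is the anisotropic Sobolev inequality used above, which does not appear explicitly in Section 2 but is immediate from (\ref{2-02})--(\ref{2-03}) by the usual Talenti/Federer--Fleming argument; I would either quote the Alvino--Ferone--Posteraro reference \cite{AVP} cited in the paper or include a one--line derivation. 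The rest of the proof is, as in the isotropic $n$--Laplace case, pure bookkeeping.
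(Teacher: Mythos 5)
The paper does not actually prove Lemma~\ref{lem2.2}: it is stated among ``important results which can be found in \cite{ZZ1,ZZ2}'' and is used as a black box, so there is no internal proof to compare your argument against. What you wrote is a correct, self-contained proof by the De~Giorgi--Stampacchia truncation method, and the anisotropy is handled in exactly the two places one would expect: the Euler identity $F_{\xi}(\nabla u)\cdot\nabla u=F(\nabla u)$ converts the energy of the test function $(u-k)_{+}$ into $\int_{A(k)}F^{n}(\nabla u)$, and the anisotropic Sobolev inequality (from the coarea formula~(\ref{2-02}) and the isoperimetric inequality~(\ref{2-03}), or trivially from $a|\xi|\le F(\xi)$) feeds the level-set recursion. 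Your exponent bookkeeping checks out: with $\tfrac1p-\tfrac1n=\tfrac1{q'}$ the recursion is $|A(h)|\le C\|f\|_{L^{q}}^{1/(n-1)}(h-k)^{-1}|A(k)|^{\beta}$ with $\beta=\tfrac{nq-1}{q(n-1)}>1$ iff $q>1$, and Stampacchia's lemma then gives $\|u_{+}\|_{L^{\infty}}\le C(n,q,|\Omega|)\,\|f\|_{L^{q}}^{1/(n-1)}$; since $Q_{n}(-u)=-Q_{n}(u)$ by evenness of $F$, the same bound applies to $u_{-}$.

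One small gap: the choice ``$p\in(1,n)$ with $p^{\ast}=q'$'' requires $q'>\tfrac{n}{n-1}$, i.e.\ $q<n$, so as written the argument breaks down for $q\ge n$. This is harmless but should be said out loud: since $\Omega$ is bounded, for $q\ge n$ one replaces $q$ by any $\tilde q\in(1,n)$ and uses $\|f\|_{L^{\tilde q}(\Omega)}\le|\Omega|^{1/\tilde q-1/q}\|f\|_{L^{q}(\Omega)}$, at the cost of a slightly worse constant (which still depends only on $n,q,|\Omega|$, consistent with how the lemma is invoked in Section~4.2.2). With that one line added, the proof is complete. As a side remark, the cited references typically derive this estimate by Moser/Serrin iteration rather than Stampacchia decay of level sets; both routes are standard and give the same power $1/(n-1)$ of $\|f\|_{L^{q}}$, so your choice is a legitimate (arguably cleaner) alternative.
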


\begin{lemma}
\label{jmdo} Let $\left\{  u_{k}\right\}  $ be a sequence in $W^{1,n}\left(
\mathbb{R}
^{n}\right)  $ such that $\left\Vert u_{k}\right\Vert _{F}=1$ and
$u_{k}\rightharpoonup u\neq0$, weakly in $W^{1,n}\left(
\mathbb{R}^{n}\right) $.
 If
\[
0<p<p_{n}\left(  u\right) :=\frac{1}{\left(  1-\left\Vert u\right\Vert
_{F}^{n}\right)  ^{1/\left(  n-1\right)  }},
\]
then
\[
\underset{k\rightarrow \infty}{\sup}\int_{
\mathbb{R}
^{n}}\Phi\left(  \lambda_{n}p\left\vert u_{k}\right\vert ^{\frac{n}{n-1}
}\right)  dx<+\infty.
\]

\end{lemma}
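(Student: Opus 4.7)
My plan is to adapt P.-L.~Lions' classical concentration-compactness argument to the anisotropic Sobolev norm $\|\cdot\|_F$ on the unbounded domain $\R^n$, using as a black box the anisotropic Moser-Trudinger inequality on $\R^n$ of Zhou-Zhou \cite{ZZ2}. The essential step is a Brezis-Lieb decomposition splitting $u_k$ into the weak limit $u$ and the oscillatory part $u_k - u \rightharpoonup 0$; one then applies Moser-Trudinger to a renormalization of the latter and handles the former as a fixed lower-order contribution.

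First I would reduce to the case $\|u\|_F < 1$. If $\|u\|_F = 1$, the Brezis-Lieb lemma (applicable because $F^n$ and $|\cdot|^n$ are continuous and $n$-homogeneous) forces $\|u_k - u\|_F \to 0$, so $u_k \to u$ strongly in $W^{1,n}(\R^n)$ and the conclusion follows from a truncation argument applied to the fixed function $u$, of the type described in the last paragraph below. In the main case $\|u\|_F < 1$, Brezis-Lieb gives $\|u_k - u\|_F^n \to 1 - \|u\|_F^n \in (0,1)$, and the hypothesis $p < p_n(u)$ rewrites as $p^{n-1}(1-\|u\|_F^n) < 1$. I then fix $\epsilon > 0$ and $q > 1$ small enough so that $qp(1+\epsilon)\|u_k-u\|_F^{n/(n-1)} \leq 1$ for all large $k$.

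The elementary inequality $(a+b)^{n/(n-1)} \leq (1+\epsilon)a^{n/(n-1)} + C_\epsilon b^{n/(n-1)}$ applied to $a = |u_k-u|$, $b = |u|$, combined with Young's inequality for the convex function $\Phi$ (convex because it is a power series with nonnegative coefficients starting in degree $n-1$), yields
\[
\Phi\bigl(\lambda_n p|u_k|^{n/(n-1)}\bigr) \leq \tfrac{1}{q}\Phi\bigl(q\lambda_n p(1+\epsilon)|u_k-u|^{n/(n-1)}\bigr) + \tfrac{1}{q'}\Phi\bigl(q'\lambda_n p C_\epsilon|u|^{n/(n-1)}\bigr),
\]
where $q'$ is the Hölder conjugate of $q$. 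Setting $v_k := (u_k - u)/\|u_k - u\|_F$ gives $\|v_k\|_F = 1$, and by the choice of $\epsilon, q$ the first argument of $\Phi$ is dominated by $\lambda_n|v_k|^{n/(n-1)}$; the Moser-Trudinger inequality \cite{ZZ2} then bounds its integral uniformly in $k$.

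The remaining obstacle, which I expect to be the main technical point, is to verify $\int_{\R^n}\Phi(\beta|u|^{n/(n-1)})\,dx < \infty$ for the fixed constant $\beta := q'\lambda_n p C_\epsilon$ and the fixed $u \in W^{1,n}(\R^n)$; this is nontrivial because $\beta$ can be large and a generic $W^{1,n}(\R^n)$ function need not enjoy such a bound for arbitrarily large exponents. I would handle it by truncation: set $w_M := (|u|-M)_+$, and observe that $\|w_M\|_F \to 0$ as $M \to \infty$ by dominated convergence on $|u|^n$ and $F^n(\nabla u)\mathbf{1}_{\{|u|>M\}}$. Choosing $M$ large enough that $\|w_M\|_F^{n/(n-1)}$ is smaller than a prescribed multiple of $\lambda_n/\beta$, inequality \cite{ZZ2} applied to $w_M/\|w_M\|_F$ controls $\int \Phi(\beta(1+\epsilon')w_M^{n/(n-1)})\,dx$. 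On $\{|u|\leq M\}$ the power-series estimate $\Phi(t) \leq Ct^{n-1}$ for bounded $t$ gives $\Phi(\beta|u|^{n/(n-1)}) \leq C'|u|^n \in L^1(\R^n)$, and on $\{|u|>M\}$ a further Young split $|u|^{n/(n-1)} \leq (1+\epsilon')w_M^{n/(n-1)} + C'_{\epsilon'}M^{n/(n-1)}$ together with one more application of the convexity of $\Phi$ reduces the remaining integral to the previous two. Assembling the estimates produces a uniform bound on $\int_{\R^n}\Phi(\lambda_n p|u_k|^{n/(n-1)})\,dx$ and completes the proof.
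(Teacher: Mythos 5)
The paper does not prove Lemma~\ref{jmdo}; it is stated as a recalled result and attributed to \cite{ZZ1,ZZ2}. So there is no internal proof to compare your attempt against, and I can only assess the attempt on its own terms.

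Your argument has a genuine gap at the opening step, and it is precisely the classical subtle point in Lions' concentration-compactness lemma in the borderline case. You assert that ``Brezis--Lieb gives $\|u_k-u\|_F^n \to 1-\|u\|_F^n$, applicable because $F^n$ and $|\cdot|^n$ are continuous and $n$-homogeneous.'' The Brezis--Lieb lemma requires almost-everywhere convergence of the argument of the integrand, not merely continuity and homogeneity. For $\int |u_k|^n\,dx$ this is available (Rellich locally plus a diagonal argument give $u_k\to u$ a.e.), but for $\int F^n(\nabla u_k)\,dx$ you would need $\nabla u_k\to\nabla u$ a.e., which does \emph{not} follow from $u_k\rightharpoonup u$ in $W^{1,n}(\R^n)$. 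Worse, the decomposition is not just unproven but can be false: already in the isotropic setting with $n\ge 3$, taking $u(x)=x_1$ (suitably cut off) and $v_k(x)=k^{-1}G(kx_1)$ with $G'=g$ a mean-zero periodic two-valued step chosen so that $\int_0^1|g|^n\,dt + 1 > \int_0^1|1+g|^n\,dt$, one has $v_k\rightharpoonup 0$ in $W^{1,n}$ while $\lim_k\|\nabla v_k\|_n^n > \lim_k\|\nabla(u+v_k)\|_n^n - \|\nabla u\|_n^n$. Thus even the one-sided inequality $\limsup_k\|u_k-u\|_F^n\le 1-\|u\|_F^n$ — the only direction your argument actually uses — fails in general. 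Without it you cannot choose $q,\epsilon$ with $qp(1+\epsilon)\|u_k-u\|_F^{n/(n-1)}\le 1$ across the full range $p<p_n(u)$, and the application of the Moser--Trudinger inequality of \cite{ZZ2} to $(u_k-u)/\|u_k-u\|_F$ collapses. (For $n=2$ and Euclidean $F$ the issue disappears because $\|\nabla(u_k-u)\|_2^2=\|\nabla u_k\|_2^2-2\int\nabla u_k\cdot\nabla u+\|\nabla u\|_2^2$ is pure Hilbert-space geometry, but that is not the case here.)

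This is exactly the gap present in Lions' original sketch of the concentration-compactness lemma in the limit case; the complete proofs now in the literature avoid the abstract energy splitting and work instead via level-set/rearrangement estimates or exploit additional structure of the sequence (for instance, that it consists of radially symmetric-decreasing functions, as in the actual application in this paper). The remaining ingredients of your plan — the Young split of the convex $\Phi$ (valid since $\Phi(0)=0$ and $\Phi''\ge 0$ on $[0,\infty)$), the H\"older separation, and the truncation argument showing $\int_{\R^n}\Phi(\beta|u|^{n/(n-1)})\,dx<\infty$ for a fixed $u\in W^{1,n}(\R^n)$ — are sound, but they cannot be assembled without first establishing the gradient estimate by some means other than Brezis--Lieb.
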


\section{The maximizing sequence}

Let $\left\{  \beta_{k}\right\}  $ an increasing sequence which
converges to $\lambda_{n}$ and $\left\{  R_{k}\right\}  $ be an increasing sequence which diverges to
infinity as $k\rightarrow \infty$. Denote
\[
I_{\beta_{k}}^{\alpha}\left(  u\right)  =\int_{\mathcal{W}_{R_{k}}}\Phi\left(  \beta
_{k}\left\vert u\right\vert ^{\frac{n}{n-1}}\left(  1+\alpha\left\Vert
u\right\Vert _{{n}}^{{n}}\right)  ^{\frac{1}{n-1}}\right)  dx
\]
and
\[
H=\left\{  \left.  u\in W_{0}^{1,n}\left(  \mathcal{W}_{R_{k}}\right)  \right\vert
\left\Vert u\right\Vert _{F}=1\right\}  .
\]

\begin{lemma}
For any $0\leq\alpha <1$, there exists an extremal function $u_{k}\in H$
such that
\[
I_{\beta_{k}}^{\alpha}\left(  u_{k}\right)  =\underset{u\in H}{\sup}%
I_{\beta_{k}}^{\alpha}\left(  u\right).
\]

\end{lemma}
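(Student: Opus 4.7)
The idea is the direct method of the calculus of variations on the bounded Wulff ball $\mathcal{W}_{R_{k}}$, where $W^{1,n}_{0}$ is weakly compact and compactly embedded in every $L^{q}$, $q<\infty$. I take a maximizing sequence $\{u_{j}\}\subset H$ for $I_{\beta_{k}}^{\alpha}$; the constraint $\|u_{j}\|_{F}=1$ keeps it bounded in $W^{1,n}_{0}(\mathcal{W}_{R_{k}})$, so after passing to a subsequence one obtains $u_{j}\rightharpoonup u_{k}$ weakly in $W^{1,n}_{0}$, strongly in every $L^{q}$ (hence pointwise a.e.). Weak lower semicontinuity of the convex functional $v\mapsto\int F^{n}(\nabla v)\,dx$ and strong $L^{n}$ convergence then give $\|u_{k}\|_{F}\leq 1$ and $\|u_{j}\|_{n}^{n}\to\|u_{k}\|_{n}^{n}$.

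The central step is to pass to the limit in the integral. Set $a_{j}:=\int_{\mathcal{W}_{R_{k}}}F^{n}(\nabla u_{j})\,dx$ and $b_{j}:=\|u_{j}\|_{n}^{n}$, so $a_{j}+b_{j}=1$. The elementary estimate
\[
(1+\alpha b_{j})\,a_{j}=a_{j}+\alpha a_{j}b_{j}\leq a_{j}+\alpha b_{j}\leq 1,
\]
valid for $0\leq\alpha<1$, shows that the rescaled function $\tilde u_{j}:=u_{j}/a_{j}^{1/n}$, which has $\int F^{n}(\nabla\tilde u_{j})\,dx=1$, satisfies
\[
\beta_{k}|u_{j}|^{\frac{n}{n-1}}(1+\alpha b_{j})^{\frac{1}{n-1}}=\beta_{k}\bigl[(1+\alpha b_{j})a_{j}\bigr]^{\frac{1}{n-1}}|\tilde u_{j}|^{\frac{n}{n-1}}\leq\beta_{k}|\tilde u_{j}|^{\frac{n}{n-1}}.
\]
Since $\beta_{k}<\lambda_{n}$, one can pick $q>1$ with $q\beta_{k}<\lambda_{n}$; the subcritical anisotropic Moser--Trudinger inequality \eqref{1-02} applied to each $\tilde u_{j}$ on $\mathcal{W}_{R_{k}}$ then yields
\[
\int_{\mathcal{W}_{R_{k}}}\Phi\bigl(\beta_{k}|u_{j}|^{\frac{n}{n-1}}(1+\alpha b_{j})^{\frac{1}{n-1}}\bigr)^{q}\,dx\leq\int_{\mathcal{W}_{R_{k}}}e^{q\beta_{k}|\tilde u_{j}|^{\frac{n}{n-1}}}\,dx\leq C,
\]
uniformly in $j$. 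This $L^{q}$ bound on a set of finite measure delivers the equi-integrability required by Vitali's convergence theorem, which together with the pointwise convergence of the integrand (from $u_{j}\to u_{k}$ a.e.\ and $b_{j}\to\|u_{k}\|_{n}^{n}$) gives $I_{\beta_{k}}^{\alpha}(u_{j})\to I_{\beta_{k}}^{\alpha}(u_{k})$.

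It remains to upgrade $\|u_{k}\|_{F}\leq 1$ to $\|u_{k}\|_{F}=1$. The supremum is strictly positive (for instance, by evaluating on a smooth bump normalized in $H$), so $u_{k}\not\equiv 0$. If $\|u_{k}\|_{F}<1$, the competitor $v:=u_{k}/\|u_{k}\|_{F}\in H$ satisfies $|v|>|u_{k}|$ on $\{u_{k}\neq 0\}$ and $\|v\|_{n}>\|u_{k}\|_{n}$; since $\Phi$ is strictly increasing on positive reals, this forces $I_{\beta_{k}}^{\alpha}(v)>I_{\beta_{k}}^{\alpha}(u_{k})$, contradicting the maximality of $u_{k}$. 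Hence $u_{k}\in H$ attains the supremum.

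\textbf{Main obstacle.} The only non-routine ingredient is the equi-integrability of $\Phi(\beta_{k}|u_{j}|^{n/(n-1)}(1+\alpha\|u_{j}\|_{n}^{n})^{1/(n-1)})$; all other steps are standard consequences of weak compactness and Rellich--Kondrachov on the bounded domain $\mathcal{W}_{R_{k}}$. Equi-integrability is precisely where the hypothesis $\alpha<1$ enters, through the identity $(1+\alpha b)a\leq 1$ when $a+b=1$, which allows one to reduce the estimate to a subcritical ($q\beta_{k}<\lambda_{n}$) application of \eqref{1-02}.
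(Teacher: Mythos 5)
Your proof is correct, and it takes a genuinely different route from the paper's. The paper first proves $u_k\neq 0$ by contradiction, then invokes the concentration--compactness lemma (Lemma~2.3, stated for sequences with a nonzero weak limit), bounding $(1+\alpha\|u_k\|_n^n)^{1/(n-1)}$ by $p_n(u_k)=(1-\|u_k\|_F^n)^{-1/(n-1)}$ to get the required higher integrability. You bypass concentration--compactness entirely by exploiting the strict subcriticality $\beta_k<\lambda_n$: the algebraic observation that $(1+\alpha b_j)a_j\leq a_j+\alpha b_j\leq 1$ when $a_j+b_j=1$ and $\alpha<1$, together with the Dirichlet rescaling $\tilde u_j=u_j/a_j^{1/n}$, reduces the exponent in the integrand to $\beta_k|\tilde u_j|^{n/(n-1)}$ with $\int F^n(\nabla\tilde u_j)=1$, so the uniform $L^q$ bound with $q\beta_k<\lambda_n$ follows from the basic Wang--Xia inequality~\eqref{1-02} alone. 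This is more elementary and makes the role of $\alpha<1$ entirely transparent. You also spell out the final step (rescaling $u_k/\|u_k\|_F$ to force $\|u_k\|_F=1$), which the paper states without argument; both proofs in fact need this observation, since weak lower semicontinuity only gives $\|u_k\|_F\leq 1$. One small implicit point worth noting: the rescaling requires $a_j>0$, which is automatic from the Poincar\'e inequality on $\mathcal{W}_{R_k}$ (otherwise $u_j\equiv 0$, contradicting $\|u_j\|_F=1$); this costs nothing but deserves a word.
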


\begin{proof}
There exists a sequence $\left\{  v_{i}\right\}  \in H$ such that
\[
\underset{i\rightarrow\infty}{\lim}I_{\beta_{k}}^{\alpha}\left(  v_{i}\right)
=\underset{u\in H}{\sup}I_{\beta_{k}}^{\alpha}\left(  u\right)  .
\]
Since $v_{i}$ is bounded in $W^{1,n}\left(
\mathbb{R}
^{n}\right)  $, there exists a subsequence which will still be denoted by $v_{i}$,
such that
\[
\begin{array}
[c]{c}
v_{i}\rightharpoonup u_{k}\text{ weakly in }W^{1,n}\left(
\mathbb{R}
^{n}\right)  ,\\
v_{i}\rightarrow u_{k}\text{ strongly in }L^{s}\left(
\mathcal{W}_{R_{k}}\right) ,\\
v_{i}\rightarrow u_{k}\ \text{ a.e. in}\
\mathbb{R}^{n}
\end{array}
\]
for any $1<s<\infty$ as $i\rightarrow\infty$. Therefore

\begin{align*}
g_{i} &  =\Phi\left\{  \beta_{k}\left\vert v_{i}\right\vert ^{\frac{n}{n-1}%
}\left(  1+\alpha\left\Vert v_{i}\right\Vert _{{n}}^{{n}}\right)  ^{\frac
{1}{n-1}}\right\}  \\
&  \rightarrow g_{k}=\Phi\left\{  \beta_{k}\left\vert u_{k}\right\vert
^{\frac{n}{n-1}}\left(  1+\alpha\left\Vert u_{k}\right\Vert _{{n}}^{{n}%
}\right)  ^{\frac{1}{n-1}}\right\}
\end{align*}
a.e. in $\mathbb{R}^{n}$. Nxet we claim that $u_{k}\neq0$. If not, we have $1+\alpha\left\Vert v_{i}%
\right\Vert _{{n}}^{{n}}\rightarrow1$. Thus $g_{i}$ is bounded in
$L^{r}\left( \mathcal{W}_{R_{k}}\right)  $ for some $r>1$, then $g_{i}\rightarrow0$.
Hence $\underset{u\in H}{\sup}I_{\beta_{k}}^{\alpha}\left(  u\right)
=0$, which is impossible. For any
$p<p_{n}\left(  u_{k}\right)  :=\frac{1}{\left(  1-\left\Vert u_{k}\right\Vert
_{F}^{n}\right)  ^{1/\left(  n-1\right) }}$, it follows from Lemma \ref{jmdo} that
\[
\underset{i\rightarrow\infty}{\lim\sup}\int_{\mathbb{R}^{n}}
\Phi\left(  \lambda_{n}p\left\vert v_{i}\right\vert ^{\frac{n}{n-1}
}\right)  dx<+\infty.
\]
Since $0\leq\alpha < 1$, it is easy to see that
\[
(1+\alpha\left\Vert u_{k}\right\Vert _{{n}}^{{n}})^{\frac{1}{n-1}}<(1+\left\Vert u_{k}\right\Vert
_{F}^{n})^{\frac{1}{n-1}}<\frac{1}{\left(  1-\left\Vert u_{k}\right\Vert
_{F}^{n}\right)  ^{1/\left(  n-1\right) }}
=p_{n}\left(  u_{k}\right)  ,
\]
then $g_{i}$ is bounded in $L^{s}$ for some $s>1$ and $g_{i}\rightarrow
g_{k}$ strongly in $L^{1}\left(  \mathcal{W}_{R_{k}}\right)  $ as $i\rightarrow\infty
$\emph{. }Thus the extremal function is attained for the case $\beta
_{k}<\lambda_{n}$ and $\left\Vert u_{k}\right\Vert _{F}=1$.
\end{proof}
Similar as in \cite{liruf,LuZhu,ZZ2}, we have the following results.
\begin{lemma}
Let $u_{k}$ be as above, then

(i) $u_{k}$ is a maximizing sequence for $S;$

(ii) $u_{k}$ may be chosen to be radially symmetric and decreasing with respect to $F^{0}(x)$.
\end{lemma}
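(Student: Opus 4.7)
The plan is to handle the two claims separately. For (i), I will establish matching upper and lower bounds for $I_{\beta_k}^\alpha(u_k)$ as $k\to\infty$. The upper bound is immediate: extending $u_k$ by zero outside $\mathcal{W}_{R_k}$ yields an element of $W^{1,n}(\R^n)$ with $\|u_k\|_F=1$, and since $\beta_k\le\lambda_n$ and $\Phi$ is nondecreasing,
$$I_{\beta_k}^\alpha(u_k)\le\int_{\R^n}\Phi\!\left(\lambda_n|u_k|^{\frac{n}{n-1}}(1+\alpha\|u_k\|_n^n)^{\frac{1}{n-1}}\right)dx\le S,$$
hence $\limsup_{k\to\infty} I_{\beta_k}^\alpha(u_k)\le S$.

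For the matching lower bound, given any admissible $v\in W^{1,n}(\R^n)$ with $\|v\|_F\le 1$, I will produce a sequence $v_k\in H$ with $v_k\to v$ pointwise a.e.\ and in $W^{1,n}(\R^n)$. Taking a smooth cutoff $\chi_k$ equal to $1$ on $\mathcal{W}_{R_k/2}$, supported in $\mathcal{W}_{R_k}$, with $|\nabla\chi_k|\le C/R_k$, set $w_k=v\chi_k$ and $v_k=w_k/\|w_k\|_F$. Using the triangle inequality for $F$ together with $\|v/R_k\|_n\to 0$ and the dominated convergence theorem applied to $\chi_k F(\nabla v)$ and $\chi_k v$, one obtains $\|w_k\|_F\to\|v\|_F$, so that $v_k\in H$ and $v_k\to v$ in $W^{1,n}(\R^n)$. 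By the maximality of $u_k$ in $H$, $I_{\beta_k}^\alpha(u_k)\ge I_{\beta_k}^\alpha(v_k)$, and Fatou's lemma applied to the pointwise convergent nonnegative integrand gives
$$\liminf_{k\to\infty}I_{\beta_k}^\alpha(u_k)\ge\int_{\R^n}\Phi\!\left(\lambda_n|v|^{\frac{n}{n-1}}(1+\alpha\|v\|_n^n)^{\frac{1}{n-1}}\right)dx.$$
Taking the supremum over $v$ yields $\liminf\ge S$, finishing (i).

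For (ii), I will apply the convex symmetrization $u_k^\star$ of $u_k$ with respect to $F$ defined in Section 2. By construction $u_k^\star$ is radially symmetric and decreasing as a function of $F^0(x)$. Since convex symmetrization preserves the distribution function of $|u_k|$, we have $\|u_k^\star\|_n=\|u_k\|_n$ and
$$\int_{\R^n}\Phi\!\left(\beta_k|u_k^\star|^{\frac{n}{n-1}}(1+\alpha\|u_k^\star\|_n^n)^{\frac{1}{n-1}}\right)dx=I_{\beta_k}^\alpha(u_k).$$
The anisotropic P\'olya-Szeg\H{o} inequality from \cite{AVP} gives $\int F^n(\nabla u_k^\star)\le\int F^n(\nabla u_k)$, whence $\|u_k^\star\|_F\le 1$; moreover, the support of $u_k^\star$ is the Wulff ball of the same Lebesgue measure as $\{u_k\ne 0\}\subseteq\mathcal{W}_{R_k}$, so $u_k^\star\in W_0^{1,n}(\mathcal{W}_{R_k})$. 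If $\|u_k^\star\|_F<1$, the rescaling $\tilde u_k:=u_k^\star/\|u_k^\star\|_F$ belongs to $H$, is still radially symmetric and decreasing in $F^0$, and satisfies $I_{\beta_k}^\alpha(\tilde u_k)\ge I_{\beta_k}^\alpha(u_k^\star)=I_{\beta_k}^\alpha(u_k)$, since both the pointwise factor $|\tilde u_k|$ and the $L^n$ norm $\|\tilde u_k\|_n$ dominate their counterparts for $u_k^\star$. The maximality of $u_k$ in $H$ forces equality, and we may replace $u_k$ with $\tilde u_k$ without affecting part (i).

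The main technical input is the anisotropic P\'olya-Szeg\H{o} inequality underpinning (ii), available from \cite{AVP}; the approximation step in (i) is then routine once the cutoff error of order $1/R_k$ in the gradient term is absorbed using $v\in L^n(\R^n)$.
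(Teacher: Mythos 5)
Your proposal is correct and follows essentially the same strategy as the paper: part (i) is an upper bound from $\beta_k\le\lambda_n$ plus a cutoff-and-limit lower bound (you use a single cutoff at scale $R_k$ with Fatou, whereas the paper fixes a cutoff at scale $L$, applies the Levi lemma in $k$, then sends $L\to\infty$, but both are the same idea), and part (ii) uses convex symmetrization and the anisotropic P\'olya--Szeg\H{o} inequality exactly as the paper does, with your explicit rescaling of $u_k^\star$ replacing the paper's appeal to the equality case $\tau_k=1$. One small imprecision: when $\|v\|_F<1$, your $v_k$ converges pointwise to $v/\|v\|_F$ rather than $v$, though the resulting lower bound is only stronger, so the conclusion is unaffected.
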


\begin{proof}
(i) Let $\eta$ be a cut-off function which is $1$ on $\mathcal{W}_{1}$ and $0$ on $\mathbb{R}^{n}\backslash \mathcal{W}_{2}$.
Then for any given $\varphi\in W^{1,n}\left(\mathbb{R}^{n}\right)  $ with
$\left\Vert \varphi\right\Vert_{F} =1$, it follows that

\[
\tau^{n}\left( L\right)  :=\int_{
\mathbb{R}^{n}}\left( F^{n}\left(\nabla(\eta( \frac{x}{L}) \varphi)\right)
+\left\vert \eta\left(  \frac{x}{L}\right)  \varphi\right\vert
^{n}\right)  dx\rightarrow1, \ \ \ \ \text{as}\ \ L\rightarrow+\infty.
\]

Thus for a fixed $L$ and $R_{k}>2L$, we have
\begin{align*}
&  \int_{\mathcal{W}_{L}}\Phi\left(  \beta_{k}\left\vert \frac{\varphi}{\tau\left(
L\right)  }\right\vert ^{\frac{n}{n-1}}\left(  1+\alpha\left\Vert \frac
{\eta\left(  \frac{x}{L}\right)  \varphi}{\tau\left(  L\right)  }\right\Vert
_{{n}}^{{n}}\right)  ^{\frac{1}{n-1}}\right)  dx\\
&  \leq\int_{\mathcal{W}_{2L}}\Phi\left(  \beta_{k}\left\vert \frac{\eta\left(  \frac
{x}{L}\right)  \varphi}{\tau\left(  L\right)  }\right\vert ^{\frac{n}{n-1}%
}\left(  1+\alpha\left\Vert \frac{\eta\left(  \frac{x}{L}\right)  \varphi
}{\tau\left(  L\right)  }\right\Vert _{{n}}^{{n}}\right)  ^{\frac{1}{n-1}%
}\right)  dx\\
&  \leq\int_{\mathcal{W}_{R_{k}}}\Phi\left(  \beta_{k}\left\vert u_{k}\right\vert
^{\frac{n}{n-1}}\left(  1+\alpha\left\Vert u_{k}\right\Vert _{{n}}^{{n}%
}\right)  ^{\frac{1}{n-1}}\right)  dx.
\end{align*}
Then it follows from Levi Lemma that

\begin{align*}
& \int_{\mathcal{W}_{L}}\Phi\left(  \lambda_{n}\left\vert \frac{\varphi}{\tau\left(
L\right)  }\right\vert ^{\frac{n}{n-1}}\left(  1+\alpha\left\Vert \frac
{\eta\left(  \frac{x}{L}\right)  \varphi}{\tau\left(  L\right)  }\right\Vert
_{{n}}^{{n}}\right)  ^{\frac{1}{n-1}}\right)  dx\\
\leq&\underset{k\rightarrow\infty}{\lim}\int_{
\mathbb{R}
^{n}}\Phi\left(  \beta_{k}\left\vert u_{k}\right\vert ^{\frac{n}{n-1}}\left(
1+\alpha\left\Vert u_{k}\right\Vert _{{n}}^{{n}}\right)  ^{\frac{1}{n-1}%
}\right)  dx.
\end{align*}

Taking limits \ $L\rightarrow+\infty$,

\[
\int_{\mathbb{R}^{n}}\Phi\left(  \lambda_{n}\left\vert \varphi\right\vert ^{\frac{n}{n-1}%
}\left(  1+\alpha\left\Vert \varphi\right\Vert _{{n}}^{{n}}\right)  ^{\frac
{1}{n-1}}\right)  dx\leq\underset{k\rightarrow\infty}{\lim}\int_{
\mathbb{R}
^{n}}\Phi\left(  \beta_{k}\left\vert u_{k}\right\vert ^{\frac{n}{n-1}}\left(
1+\alpha\left\Vert u_{k}\right\Vert _{{n}}^{{n}}\right)  ^{\frac{1}{n-1}
}\right)  dx.
\]
Thus
\begin{align*}
&  \underset{k\rightarrow\infty}{\lim}\int_{\mathcal{W}_{R_{k}}}\Phi\left(  \beta
_{k}\left\vert u_{k}\right\vert ^{\frac{n}{n-1}}\left(  1+\alpha\left\Vert
u_{k}\right\Vert _{{n}}^{{n}}\right)  ^{\frac{1}{n-1}}\right)  dx\\
=& \underset{\left\Vert u\right\Vert _{F }=1}{\sup}\int_{
\mathbb{R}
^{n}}\Phi\left(  \lambda_{n}\left\vert u\right\vert ^{\frac{n}{n-1}}\left(
1+\alpha\left\Vert u\right\Vert _{{n}}^{{n}}\right)  ^{\frac{1}{n-1}}\right)
dx.
\end{align*}

 Let $u_{k}^{\star}$ be convex symmetric rearrangement of $u_{k}$ with respect to $F^{0}(x)$, then

\[
\tau_{k}^{n}:=\int_{\mathbb{R}^{n}}
\left(  F^{n}(\nabla u_{k}^{\star})+\left\vert
u_{k}^{\star}\right\vert^{n}\right)  dx
\leq\int_{\mathbb{R}^{n}}\left(  F^{n}( \nabla u_{k})+\left\vert u_{k}
\right\vert ^{n}\right)  dx=1.
\]

Therefore

\[
\int_{\mathcal{W}_{R_{k}}}\Phi\left(  \beta_{k}\left\vert \frac{u_{k}^{\star}}{\tau_{k}
}\right\vert ^{\frac{n}{n-1}}\left(  1+\alpha\left\Vert \frac{u_{k}^{\star}
}{\tau_{k}}\right\Vert _{{n}}^{{n}}\right)  ^{\frac{1}{n-1}}\right)
dx\geq\int_{\mathcal{W}_{R_{k}}}\Phi\left(  \beta_{k}\left\vert u_{k}^{\star}\right\vert
^{\frac{n}{n-1}}\left(  1+\alpha\left\Vert u_{k}^{\star}\right\Vert _{{n}}
^{{n}}\right)  ^{\frac{1}{n-1}}\right)  dx.
\]

It is easy to see that
\[
\int_{\mathcal{W}_{R_{k}}}\Phi\left(  \beta_{k}\left\vert u_{k}^{\star}\right\vert
^{\frac{n}{n-1}}\left(  1+\alpha\left\Vert u_{k}^{\star}\right\Vert _{{n}}%
^{{n}}\right)  ^{\frac{1}{n-1}}\right)  dx=\int_{\mathcal{W}_{R_{k}}}\Phi\left(
\beta_{k}\left\vert u_{k}\right\vert ^{\frac{n}{n-1}}\left(
1+\alpha\left\Vert u_{k}\right\Vert _{{n}}^{{n}}\right)  ^{\frac
{1}{n-1}}\right)  dx.
\]

Then one can obtain $\tau_{k}=1$. Also we know the fact that $\tau_{k}=1$ if and only if $u_{k}$ is
radial. Thus

\begin{align*}
&  \int_{\mathcal{W}_{R_{k}}}\Phi\left(  \beta_{k}\left\vert u_{k}^{\star}\right\vert
^{\frac{n}{n-1}}\left(  1+\alpha\left\Vert u_{k}^{\star}\right\Vert _{{n}}%
^{{n}}\right)  ^{\frac{1}{n-1}}\right)  dx\\
=&  \underset{\left\Vert u\right\Vert _{F}%
=1}{\sup}\int_{\mathcal{W}_{R_{k}}}\exp\left\{  \beta_{k}\left\vert u\right\vert
^{\frac{n}{n-1}}\left(  1+\alpha\left\Vert u\right\Vert _{{n}}^{{n}}\right)
^{\frac{1}{n-1}}\right\}  dx.
\end{align*}
Therefore one can assume $u_{k}=u_{k}(r)$ and
$u_{k}\left( r\right)  $ is decreasing with respect to $ r=F^{0}(x)$.
\end{proof}

\section{Proof of Theorem 1.1}

In this section, we will give the proof of Theorem 1.1. Firstly, we prove the second part of Theorem 1.1 by the test functions argument. Then we prove the first part of Theorem 1.1 by considering two cases.
Let $c_{k}=maxu_{k}(x)=u_{k}(0)$. When $\sup_{k}c_{k}<+\infty$, it can be proved by using the concentration-compactness lemma. When  $\sup_{k}c_{k}=+\infty$, we perform a blow-up procedure and prove the corresponding results.

\subsection{Proof of the second part of Theorem 1.1}

In this subsection, we will show that the sumpremum in Theorem \ref{moser-trudinger} is infinity if $\alpha\geq1$. Namely, we prove the sharpness of the inequality in Theorem \ref{moser-trudinger}.  The proof of the second part of Theorem \ref{moser-trudinger} is
based on a test function argument. Unlike in the case for  bounded domains
\cite{Z1}, we cannot construct the test function by the
eigenfunction of the first eigenvalue problem: $$\underset {u\in
W_{0}^{1,n}\left(  \Omega\right)  ,u\neq0}{\inf}\frac{\left\Vert
F(\nabla u)\right\Vert _{n}^{n}}{\left\Vert u\right\Vert _{n}^{n}},$$
since the above infimum is actually not attained when $\Omega=\mathbb{R}
^{n}$.  To overcome this difficulty, we
will construct a new test function sequence.

\begin{proof}
[Proof of the Second Part of Theorem \ref{moser-trudinger}]
Let
\[
u_{k}=\left\{
\begin{array}
[c]{c}
\frac{1}{(n\kappa_{n})^{\frac{1}{n}}}\left(  \log k\right)  ^{\frac{n-1}{n}},\ \ \text{\ \ \ \ \ \ \ \ \ \ \ }
0< F^{0}(x)  \leq\frac{R_{k}}{k},\\
\frac{1}{(n\kappa_{n})^{\frac{1}{n}}}\left(  \log k\right)  ^{-\frac{1}{n}}\ \log\frac{R_{k}}{F^{0}(x)},\text{ \ \ \ \ }\frac{R_{k}}{k}< F^{0}(x) \leq
R_{k},\\
0,\text{ \ \ \ \ \ \ \ \ \ \  \ \ \ \ \ \ \ \ \  \ \ \  \ \ \ \ \  \  \ \ \ } F^{0}(x)>R_{k},
\end{array}
\right.
\]
where $R_{k}:=\frac{\left(  \log k\right)  ^{1/2n}}{\log\log k}\rightarrow
+\infty$ as $k\rightarrow\infty$. It is easy to verify that
\[
\int_{\mathbb{R}
^{n}}F^{n}(\nabla u_{k})dx=1.
\]

Also we have
\begin{align*}
\left\Vert u_{k}\right\Vert _{n}^{n} &  = \int_{\mathcal{W}_{R_{k}}\backslash \mathcal{W}_{R_{k}/k}} \left\vert u_{k}\right\vert
^{n}dx+\int_{\mathcal{W}_{R_{k}/k}}
\left\vert u_{k}\right\vert
^{n}dx\\
&  =\frac{R_{k}^{n}}{\log k}\int_{\frac{1}{k}}^{1}\left(  \log r\right)
^{n}r^{n-1}dr+\frac{\left(  \log k\right)  ^{n-1}}{n}\left(  \frac{R_{k}}{k}\right)
^{n}\\
&  =C_{n}\frac{R_{k}^{n}}{\log k}\left(  1+o\left(  1\right)  \right)
\rightarrow0\text{ as }k\rightarrow\infty,
\end{align*}
where $C_{n}=\int_{\frac{1}{k}}^{1}\left(  \log r\right)  ^{n}r^{n-1}dr$.
Thus

\[
\left\Vert u_{k}\right\Vert _{F}^{n}=1+\frac{C_{n}R_{k}^{n}}{\log k}\left(  1+o\left(  1\right)
\right).
\]
$\ $

Using the following fact
\[
1+\frac{\left\Vert u_{k}\right\Vert _{n}^{n}}{\left\Vert u_{k}\right\Vert
_{F}}=\frac{1+2\left\Vert u_{k}\right\Vert _{n}^{n}}{1+\left\Vert
u_{k}\right\Vert _{n}^{n}},
\]
$\ $
then on the Wulff ball $\mathcal{W}_{R_{k}/k}$, it follows
\begin{align*}
&  \lambda_{n}\frac{\left\vert u_{k}\right\vert ^{\frac{n}{n-1}}}{\left\Vert
u_{k}\right\Vert _{F}^{\frac{n}{n-1}}}\left(  1+\frac{\left\Vert u_{k}\right\Vert
_{n}^{n}}{\left\Vert u_{k}\right\Vert _{F}^{n}}\right)  ^{\frac{1}{n-1}}\\
=& n^{\frac{n}{n-1}}\kappa_{n}^{\frac{1}{n-1}}\left\vert u_{k}\right\vert ^{\frac{n}{n-1}%
}\frac{\left(  1+2\left\Vert u_{k}\right\Vert _{n}^{n}\right)  ^{\frac{1}%
{n-1}}}{\left(  1+\left\Vert u_{k}\right\Vert _{n}^{n}\right)  ^{\frac{2}%
{n-1}}}\\
=& n\log k\left(  1-\frac{1}{n-1}\left\Vert u_{k}\right\Vert _{n}^{2n}%
+\frac{2}{n-1}\left\Vert u_{k}\right\Vert_{n}^{3n}\left(  1+o\left(
1\right)  \right)  \right).
\end{align*}
 Therefore
\begin{align*}
& \underset{\left\Vert u\right\Vert _{F}=1}{\sup}\int_{%
\mathbb{R}
^{n}}\Phi\left( \lambda_{n}\left\vert u\right\vert ^{\frac{n}{n-1}}\left(
1+\left\Vert u\right\Vert _{n}^{n}\right)  \right)  dx\\
\geq& C\int_{\mathcal{W}_{R_{k}/k}}\exp\left(  \lambda_{n}\frac{\left\vert
u_{k}\right\vert ^{\frac{n}{n-1}}}{\left\Vert u_{k}\right\Vert _{F}^{\frac{n}{n-1}}}\left(  1+\frac{\left\Vert u_{k}\right\Vert _{n}^{n}%
}{\left\Vert u_{k}\right\Vert _{F}^{n}}\right)  \right)  dx\\
\geq& C\exp\left(  n\log k\left(  1-\frac{1}{n-1}\left\Vert u_{k}\right\Vert
_{n}^{2n}+\frac{2}{n-1}\left\Vert u_{k}\right\Vert _{n}^{3n}\left(  1+o\left(
1\right)  \right)  \right)  +n\log R_{k}-n\log k\right)  \\
=& C\exp\left(  n\log k\left(  -\frac{1}{n-1}\left\Vert u_{k}\right\Vert
_{n}^{2n}+\frac{2}{n-1}\left\Vert u_{k}\right\Vert _{n}^{3n}\left(  1+o\left(
1\right)  \right)  \right)  +n\log R_{k}\right),
\end{align*}
here we haved use the following result
 $$\left\vert \mathcal{W}_{R_{k}/k}\right\vert =\kappa_{n}(\frac{R_{k}}{k})^{n}=\kappa_{n}\exp\left(
n\log R_{k}-n\log k\right) .$$

Since
\[
n\log R_{k}=n\log\left(  \frac{\left(  \log k\right)  ^{1/2n}}{\log\log
k}\right)  =\frac{1}{2}\log\log k-n\log\log\log k
\]
and$\ $
\begin{align*}
&  n\log k\left(  -\frac{1}{n-1}\left\Vert u_{k}\right\Vert _{n}^{2n}+\frac
{2}{n-1}\left\Vert u_{k}\right\Vert _{n}^{3n}\left(  1+o\left(  1\right)
\right)  \right)  \\
&  =\frac{-n}{n-1}\frac{C_{n}^{2}R_{k}^{2n}}{\log k}\left(  1+o\left(
1\right)  \right)  \\
&  =\frac{-n}{n-1}C_{n}^{2}\frac{1}{\left(  \log\log k\right)  ^{2n}}\left(
1+o\left(  1\right)  \right)  ,
\end{align*}
then
\begin{align*}
& \int_{
\mathbb{R}^{n}}\Phi\left(  \lambda_{n}\left\vert u_{k}\right\vert ^{\frac{n}{n-1}}\left(
1+\left\Vert u_{k}\right\Vert _{n}^{n}\right)  \right)  dx\\
\geq& C\exp\left(  n\log k\left(  -\left\Vert u_{k}\right\Vert _{n}%
^{2n}\left(  1+o\left(  1\right)  \right)  \right)  +n\log R_{k}\right)  \\
=&C\exp\left(  \frac{1}{2}\log\log k-n\log\log\log k-\frac{nC_{n}^{2}}%
{n-1}\frac{1}{\left(  \log\log k\right)  ^{2n}}\left(  1+o\left(  1\right)
\right)  \right)  \\
\rightarrow&  +\infty\ \ \ as\ k\rightarrow\infty.
\end{align*}
The proof of the second part of Theorem \ref{moser-trudinger} has been completed.
\end{proof}
\subsection{Proof of the first part of Theorem 1.1}
\medskip

Now we consider two cases for the proof of the first part of Theorem
\ref{moser-trudinger}. Denote $c_{k}=maxu_{k}(x)=u_{k}(0)$. In the case of $sup_{k}c_{k}<+\infty$, the proof of Theorem \ref{moser-trudinger} is indirect and easy.  In the case of $sup_{k}c_{k}=+\infty$, we will use the blowing up
analysis method, which is based on a blowing up analysis of
sequences of solutions to $n$-anisotropic Laplacian in $\mathbb{R}
^{n}$ with exponential growth. The method has been successfully applied in
the proof of the Moser-Trudinger inequalities and related extremal functions existence
results in bounded domains (see
\cite{Adimurthi,zhu,lu-yang,lu-yang 1}) and in the unbounded domains (see \cite{ruf,LuZhu,ZZ2}).

\subsubsection{Proof in the case of $sup_{k}c_{k}<+\infty$}

By the variational calculation, the Euler-Lagrange equation for the extremal function
$u_{k}\in W_{0}^{1,n}\left(  \mathcal{W}_{R_{k}}\right)  $ of $I_{\beta_{k}}^{\alpha
}\left(  u\right)  $ can be written as
\begin{equation}
-Q_{n}(u_{k})=\mu_{k}\lambda_{k}^{-1}u_{k}^{\frac{1}{n-1}
}\Phi^{\prime}\left\{  \alpha_{k}u_{k}^{\frac{n}{n-1}}\right\}
+(\gamma_{k}-1)u_{k}^{{n}-1}, \label{equation}
\end{equation}
where
\[
\left\{
\begin{array}
[c]{c}
u_{k}\in W_{0}^{1,n}\left( \mathcal{W}_{R_{k}}\right)  ,\left\Vert u_{k}\right\Vert
_{F}=1,\\
\alpha_{k}=\beta_{k}\left(  1+\alpha\left\Vert u_{k}\right\Vert _{{n}}^{{n}%
}\right)  ^{\frac{1}{n-1}},\\
\mu_{k}=\left(  1+\alpha\left\Vert u_{k}\right\Vert _{{n}}^{{n}}\right)
/\left(  1+2\alpha\left\Vert u_{k}\right\Vert _{{n}}^{{n}}\right)  ,\\
\gamma_{k}=\alpha/\left(  1+2\alpha\left\Vert u_{k}\right\Vert _{{n}}^{{n}%
}\right)  ,\\
\lambda_{k}=\int_{\mathcal{W}_{R_{k}}}u_{k}^{\frac{n}{n-1}}\Phi^{\prime}\left(
\alpha_{k}u_{k}^{\frac{n}{n-1}}\right) dx .
\end{array}
\right.
\]

 Let us give the following important results firstly.

\begin{lemma}\label{lem4.1}
\label{lamna}$\underset{k}{\inf}\, \,\lambda_{k}>0.$
\end{lemma}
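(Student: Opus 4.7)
The plan is to exploit the elementary convexity inequality $\Phi(t)\leq t\Phi'(t)$ for $t\geq 0$ in order to turn a uniform lower bound on the critical functional $I_{\beta_k}^{\alpha}(u_k)$ into a uniform lower bound on $\lambda_k$, using only that $u_k$ is the maximizer, that $\{\beta_k\}$ is increasing, and that $\|u_k\|_F=1$.

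First I would verify the pointwise inequality $\Phi(t)\leq t\Phi'(t)$ for $t\geq 0$. This follows from two observations about $\Phi(t)=e^{t}-\sum_{j=0}^{n-2}t^{j}/j!$: (i) $\Phi(0)=0$, since the finite sum cancels the first $n-1$ Taylor terms of $e^{t}$; (ii) $\Phi$ is convex on $[0,\infty)$, because differentiating $k$ times gives $\Phi^{(k)}(t)=e^{t}-\sum_{j=0}^{n-2-k}t^{j}/j!\geq 0$ for $t\geq 0$, so in particular $\Phi''\geq 0$. Then $\Phi'$ is non-decreasing and $\Phi(t)=\int_{0}^{t}\Phi'(s)\,ds\leq \int_{0}^{t}\Phi'(t)\,ds=t\Phi'(t)$. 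Applying this pointwise with $t=\alpha_{k}u_{k}^{n/(n-1)}$ and integrating over $\mathcal{W}_{R_{k}}$ produces
\[
I_{\beta_{k}}^{\alpha}(u_{k})=\int_{\mathcal{W}_{R_{k}}}\Phi\bigl(\alpha_{k}u_{k}^{n/(n-1)}\bigr)\,dx\leq \alpha_{k}\int_{\mathcal{W}_{R_{k}}}u_{k}^{n/(n-1)}\Phi'\bigl(\alpha_{k}u_{k}^{n/(n-1)}\bigr)\,dx=\alpha_{k}\lambda_{k}.
\]

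Next I would produce a uniform positive lower bound for $I_{\beta_{k}}^{\alpha}(u_{k})$. Fix any nonzero $\varphi\in C_{c}^{\infty}(\mathcal{W}_{R_{1}})$ and normalize so that $\|\varphi\|_{F}=1$. Since $\{R_{k}\}$ is increasing, $\varphi\in W_{0}^{1,n}(\mathcal{W}_{R_{k}})$ for every $k$, so $\varphi\in H$. The maximality of $u_{k}$ and the monotonicity of $\Phi$ then give
\[
I_{\beta_{k}}^{\alpha}(u_{k})\geq I_{\beta_{k}}^{\alpha}(\varphi)\geq \int_{\mathcal{W}_{R_{1}}}\Phi\bigl(\beta_{k}|\varphi|^{n/(n-1)}\bigr)\,dx\geq \int_{\mathcal{W}_{R_{1}}}\Phi\bigl(\beta_{1}|\varphi|^{n/(n-1)}\bigr)\,dx=:C_{0}>0,
\]
where I used $(1+\alpha\|\varphi\|_{n}^{n})^{1/(n-1)}\geq 1$ and $\beta_{k}\geq \beta_{1}>0$.

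Finally, since $\|u_{k}\|_{n}^{n}\leq \|u_{k}\|_{F}^{n}=1$ and $0\leq \alpha<1$, the sequence $\alpha_{k}=\beta_{k}(1+\alpha\|u_{k}\|_{n}^{n})^{1/(n-1)}$ is uniformly bounded from above by $\lambda_{n}(1+\alpha)^{1/(n-1)}$. Combining this with the two displays above yields
\[
\lambda_{k}\geq \frac{I_{\beta_{k}}^{\alpha}(u_{k})}{\alpha_{k}}\geq \frac{C_{0}}{\lambda_{n}(1+\alpha)^{1/(n-1)}},
\]
which is a positive constant independent of $k$, proving $\inf_{k}\lambda_{k}>0$. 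The only mildly delicate step is the pointwise inequality $\Phi\leq t\Phi'$; everything else is a direct consequence of the definition of the maximizing sequence and the boundedness of $\alpha_{k}$ built into the admissible range $0\leq \alpha<1$.
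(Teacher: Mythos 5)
Your proof is correct and takes a genuinely different route from the paper's. The paper argues by contradiction: assuming $\lambda_k\to 0$, it splits $\int\Phi(\alpha_k u_k^{n/(n-1)})\,dx$ into a far-field part (controlled by the decay estimate $u_k(L)\leq(\kappa_n L^n)^{-1/n}$ and the smallness of $\|u_k\|_n^n$, which in turn follows from the power-series lower bound $\lambda_k\geq\frac{\alpha_k^{n-2}}{(n-2)!}\int u_k^n$) and a near-field part (controlled via the same inequality $\Phi(t)\leq t\Phi'(t)$, which the paper derives by reindexing the power series rather than by convexity), concluding that the whole functional tends to zero, which is "impossible." You instead avoid the contradiction scaffolding and the near/far decomposition entirely: from $\Phi(t)\leq t\Phi'(t)$ you read off the algebraic identity $I_{\beta_k}^{\alpha}(u_k)\leq\alpha_k\lambda_k$, establish the uniform lower bound $I_{\beta_k}^{\alpha}(u_k)\geq C_0>0$ by testing against a fixed admissible $\varphi\in C_c^{\infty}(\mathcal{W}_{R_1})$ (valid for all $k$ since $R_k$ is increasing and $\Phi,\beta_k$ are monotone), and bound $\alpha_k\leq\lambda_n(1+\alpha)^{1/(n-1)}$ uniformly. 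This gives $\lambda_k\geq C_0/(\lambda_n(1+\alpha)^{1/(n-1)})$ directly. Your version is shorter and makes explicit the positive lower bound on the critical functional that the paper's "it is impossible" leaves implicit; the paper's version, on the other hand, produces as byproducts the auxiliary estimates ($u_k$ small outside a large Wulff ball, $\int u_k^n$ controlled by $\lambda_k$) that it reuses later, so it is not wasted work in context. Both are valid; yours is the cleaner standalone proof of this particular lemma.
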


\begin{proof}
We prove this result by contradiction. Assume $\lambda_{k}\rightarrow0$ as $k\rightarrow\infty$, then
\begin{align}
\lambda_{k}  &  =\int_{
\mathbb{R}
^{n}}u_{k}^{\frac{n}{n-1}}\Phi^{\prime}\left(  \alpha_{k}u_{k}^{\frac{n}{n-1}
}\right)  dx=\int_{
\mathbb{R}
^{n}}u_{k}^{\frac{n}{n-1}}\underset{j=n-2}{\overset{\infty}{\sum}}
\frac{\left(  \alpha_{k}u_{k}^{\frac{n}{n-1}}\right)  ^{j}}{j!}dx\nonumber\\
&  =\int_{
\mathbb{R}
^{n}}\left(  \frac{\alpha_{k}^{n-2}u_{k}^{n}}{\left(  n-2\right)  !}
+\cdots\right)  dx\geq\frac{\alpha_{k}^{n-2}}{\left(  n-2\right)  !}\int_{
\mathbb{R}
^{n}}u_{k}^{n}dx. \label{1}
\end{align}
Since $u_{k}(r) $ is decreasing, we have
$u_{k}^{n}\left(  L\right)  \left\vert \mathcal{W}_{L}\right\vert \leq\int_{\mathcal{W}_{L}}%
u_{k}^{n}dx\leq1$, thus
\begin{equation}
u_{k}^{n}\left(  L\right)  \leq\frac{1}{\kappa_{n}L^{n}}. \label{2}%
\end{equation}
Set $\varepsilon^{n}=\frac{1}{\kappa_{n}L^{n}}$, then we
get $u_{k}\leq\varepsilon$ for any $x\notin \mathcal{W}_{L}$. Thus
\[
\int_{
\mathbb{R}
^{n}\backslash \mathcal{W}_{L}}\Phi\left(  \alpha_{k}u_{k}^{\frac{n}{n-1}}\right)
dx\leq C\int_{
\mathbb{R}
^{n}\backslash \mathcal{W}_{L}}u_{k}^{n}dx\leq C\lambda_{k}\rightarrow0.
\]
It is easy to see that
\[
\Phi\left(  \alpha_{k}u_{k}^{\frac{n}{n-1}}\right)  =\underset{j=n-1}%
{\overset{\infty}{\sum}}\frac{\left(  \alpha_{k}u_{k}^{\frac{n}{n-1}}\right)
^{j}}{j!}\leq\underset{j=n-2}{\overset{\infty}{\sum}}\frac{\alpha_{k}%
u_{k}^{\frac{n}{n-1}}\left(  \alpha_{k}u_{k}^{\frac{n}{n-1}}\right)  ^{j}%
}{\left(  j+1\right)  j!}\leq \alpha_{k}u_{k}^{\frac{n}{n-1}}\Phi^{\prime}\left(
\alpha_{k}u_{k}^{\frac{n}{n-1}}\right) ,
\]
then we obtain
\begin{align*}
&  \underset{k\rightarrow\infty}{\lim}\int_{\mathcal{W}_{L}}\Phi\left(  \alpha_{k}
u_{k}^{\frac{n}{n-1}}\right)  dx\\
=&\underset{k\rightarrow\infty}{\lim}\left(
\int_{\mathcal{W}_{L}\cap\left\{  u_{k}\geq1\right\}  }+\int_{\mathcal{W}_{L}\cap\left\{
u_{k}<1\right\}  }\right)  \Phi\left(  \alpha_{k}u_{k}^{\frac{n}{n-1}}\right)
dx\\
\leq& \underset{k\rightarrow\infty}{\lim}\left( C\int_{\mathcal{W}_{L}}u_{k}^{\frac
{n}{n-1}}\Phi\left(  \alpha_{k}u_{k}^{\frac{n}{n-1}}\right)  dx+\int
_{\mathcal{W}_{L}\cap\left\{  u_{k}<1\right\}  }\Phi\left(  \alpha_{k}u_{k}^{\frac
{n}{n-1}}\right)  dx\right)  \\
\leq& C\underset{k\rightarrow\infty}{\lim}\left( \lambda_{k}+\int_{\mathcal{W}_{L}
}u_{k}^{n}dx\right)  .
\end{align*}
By (\ref{1}), it follows that $\int_{\mathcal{W}_{L}}u_{k}^{q}dx\rightarrow0$. Thus for any
$q>1$, we obtain

\[
\underset{k\rightarrow\infty}{\lim}\int_{\mathcal{W}_{L}}\Phi\left(  \alpha_{k}%
u_{k}^{\frac{n}{n-1}}\right)  dx=0.
\]
It is impossible and the proof of Lemma \ref{lem4.1} is finished.
\end{proof}

 Now we recall the concept of Sobolev-normalized concentrating
sequence and concentration-compactness principle as in  \cite{ruf}.

\begin{definition}
\label{SNC}A sequence $\left\{  u_{k}\right\}  \in W^{1,n}\left(
\mathbb{R}
^{n}\right)  $ is a Sobolev-normalized concentrating sequence, if

i) $\left\Vert u_{k}\right\Vert _{F}=1;$

ii) $u_{k}\rightharpoonup0$ weakly in $W^{1,n}\left(\mathbb{R}^{n}\right) ;$

iii) there exists a point $x_{0}$ such that for any $\delta>0$, $\int_{
\mathbb{R}
^{n}\backslash \mathcal{W}_{\delta}\left(  x_{0}\right)  }\left( F^{n}
(\nabla u_{k})+\left\vert u_{k}\right\vert ^{n}\right)
dx\rightarrow0$.
\end{definition}

From Lemma \ref{jmdo} in this paper, we have the following result.
\begin{lemma}
\label{lions}Let $\left\{  u_{k}\right\}  $ be a sequence satisfying
$\left\Vert u_{k}\right\Vert _{F}=1$, and $u_{k}\rightharpoonup u$ weakly in
$W^{1,n}\left(\mathbb{R}^{n}\right) $.
Then either $\left\{  u_{k}\right\}  $ is a
Sobolev-normalized concentrating sequence, or there exists
$\gamma>0$ such that $\Phi\left(
\left(  \lambda_{n}+\gamma\right)  \left\vert u_{k}\right\vert ^{\frac{n}{n-1}%
}\right)  $ is bounded in $L^{1}\left(\mathbb{R}
^{n}\right)  $. \end{lemma}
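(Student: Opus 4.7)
The plan is to split on whether the weak limit $u$ vanishes, handling each case by a different mechanism.

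\emph{Case $u \neq 0$.} This is immediate from Lemma~\ref{jmdo}. Weak lower semicontinuity and the normalization $\|u_k\|_F = 1$ give $\|u\|_F \leq 1$, while $u \neq 0$ forces $\|u\|_F^n > 0$, so $p_n(u) = (1 - \|u\|_F^n)^{-1/(n-1)} > 1$. Picking any $p \in (1, p_n(u))$, Lemma~\ref{jmdo} yields $\sup_k \int_{\mathbb{R}^n} \Phi(\lambda_n p |u_k|^{n/(n-1)}) dx < \infty$; the second alternative holds with $\gamma := \lambda_n(p-1) > 0$. (Note that $u \neq 0$ automatically excludes SNC, since condition (ii) of Definition~\ref{SNC} requires $u_k \rightharpoonup 0$.)

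\emph{Case $u = 0$.} Assume $\{u_k\}$ is not SNC; one must extract improved integrability from the failure of single-point concentration. Pass to a subsequence so that the probability measures $\mu_k := (F^n(\nabla u_k) + |u_k|^n) dx$ converge weak-$\ast$ on $\mathbb{R}^n$ to a subprobability $\mu$, with $\mu(\mathbb{R}^n) \leq 1$. Rellich--Kondrachov gives $u_k \to 0$ in $L^n_{loc}$, so locally $\mu$ agrees with the weak-$\ast$ limit of $F^n(\nabla u_k) dx$. The failure of SNC is equivalent to $\mu$ not being a unit Dirac mass at any point; consequently $\mu(\{x_0\}) < 1$ for every $x_0 \in \mathbb{R}^n$. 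For each such $x_0$ choose $\delta_{x_0}, \eta_{x_0} > 0$ with $\mu(\overline{\mathcal{W}_{2\delta_{x_0}}(x_0)}) \leq 1 - 2\eta_{x_0}$, and a cutoff $\varphi_{x_0} \in C_c^\infty(\mathcal{W}_{2\delta_{x_0}}(x_0))$ with $\varphi_{x_0} \equiv 1$ on $\mathcal{W}_{\delta_{x_0}}(x_0)$. Subadditivity of $F$ together with $u_k \to 0$ in $L^n(\mathcal{W}_{2\delta_{x_0}}(x_0))$ yields $\int F^n(\nabla(\varphi_{x_0} u_k)) dx \leq 1 - \eta_{x_0}$ for $k$ large. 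Applying the Wang--Xia anisotropic Moser--Trudinger inequality (\ref{1-02}) to $(1 - \eta_{x_0})^{-1/n} \varphi_{x_0} u_k$ produces
\[
\sup_k \int_{\mathcal{W}_{\delta_{x_0}}(x_0)} e^{\lambda_n p_{x_0} |u_k|^{n/(n-1)}} dx \leq C_{x_0}, \qquad p_{x_0} := (1 - \eta_{x_0})^{-1/(n-1)} > 1.
\]

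\emph{Globalization and main obstacle.} To assemble the local bounds into a global one, for $R$ large cover the compact ball $\overline{\mathcal{W}_R(0)}$ by finitely many such $\mathcal{W}_{\delta_{x_i}}(x_i)$; in the exterior use that $\mu(\mathbb{R}^n \setminus \mathcal{W}_R(0)) < 1$ together with a complementary cutoff and the unbounded-domain anisotropic Moser--Trudinger inequality from Zhou--Zhou \cite{ZZ2} to extract an exterior exponent $p_\infty > 1$. Setting $p := \min(p_{x_1}, \ldots, p_{x_N}, p_\infty) > 1$ and $\gamma := \lambda_n(p - 1)$, summing the finitely many bounds yields the uniform integrability of $\Phi((\lambda_n + \gamma) |u_k|^{n/(n-1)})$. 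The principal obstacle is precisely this globalization: maintaining a uniform lower bound on the local $\eta_{x_i}$ requires exploiting that $\mu$ can have only finitely many atoms of any fixed size (since $\mu(\mathbb{R}^n) \leq 1$), and controlling the noncompact region at infinity forces one to work with the unbounded-domain MT inequality or to cutoff carefully to a bounded annulus combined with a decay estimate of $u_k$ far out.
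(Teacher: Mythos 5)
Your treatment of the case $u\neq 0$ via Lemma~\ref{jmdo} is exactly what the paper does: the paper's entire justification is the one-line citation of Lemma~\ref{jmdo}, which applies precisely when the weak limit is nonzero. You correctly observe that this leaves the case $u=0$, not SNC, uncovered, and you try to supply the missing argument; that is a genuine addition over the paper's proof (which silently ignores this case). Your local step is sound: the cross terms in $F^n(\nabla(\varphi_{x_0}u_k))$ vanish by H\"older and $u_k\to 0$ in $L^n_{loc}$, and the Portmanteau bound $\limsup_k\mu_k(\overline{\mathcal{W}_{2\delta_{x_0}}(x_0)})\le\mu(\overline{\mathcal{W}_{2\delta_{x_0}}(x_0)})$ gives the local exponent gain, and the finite cover of a fixed $\overline{\mathcal{W}_R(0)}$ works.

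The exterior step, however, contains a genuine gap. You invoke $\mu\left(\mathbb{R}^n\setminus\mathcal{W}_R(0)\right)<1$, but weak-$\ast$ convergence of $\mu_k$ on $\mathbb{R}^n$ (i.e.\ against $C_c$) allows mass to escape to infinity: in the vanishing case $\mu\equiv 0$ one has $\mu\left(\mathbb{R}^n\setminus\mathcal{W}_R(0)\right)=0$ while $\mu_k\left(\mathbb{R}^n\setminus\mathcal{W}_R(0)\right)\to 1$ for every fixed $R$, so no complementary cutoff has $\|\psi u_k\|_F^n$ bounded away from $1$, and the Zhou--Zhou inequality does not yield $p_\infty>1$. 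This is not patchable for general sequences: take a standard Moser concentrating profile $v_k$ normalized so $\|v_k\|_F=1$ and set $u_k(x):=v_k(x-y_k)$ with $F^0(y_k)\to\infty$; then $u_k\rightharpoonup 0$, $\{u_k\}$ is not SNC (no fixed $x_0$ works), yet $\int_{\mathbb{R}^n}\Phi\left((\lambda_n+\gamma)|u_k|^{\frac{n}{n-1}}\right)dx\to\infty$ for every $\gamma>0$. The hypothesis the paper tacitly uses is that the $u_k$ to which Lemma~\ref{lions} is applied are radially symmetric and decreasing in $F^0(x)$. Under that assumption $u_k(x)^n\,\kappa_n F^0(x)^n\le\|u_k\|_n^n\le 1$, so $u_k\le\varepsilon$ on $\mathbb{R}^n\setminus\mathcal{W}_L$ for $L$ large, and since $\Phi(t)\le Ct^{n-1}$ for bounded $t$ one gets $\Phi\left((\lambda_n+\gamma)|u_k|^{\frac{n}{n-1}}\right)\le C(\gamma)|u_k|^n$ there, which integrates to a constant by $\|u_k\|_n\le 1$, for every $\gamma$ and without any reference to the limiting measure. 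You name "a decay estimate of $u_k$ far out" as part of the obstacle, which is the right instinct, but the decay only exists under the radial hypothesis, and absent that hypothesis the statement is false. A smaller issue: you pass to a subsequence for the weak-$\ast$ convergence, so your conclusion is only for that subsequence, whereas the lemma asserts a bound along the full sequence; for the paper's applications this is repairable by a standard sub-subsequence argument, but as written it is a loose end.
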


\begin{theorem}
\label{attain lemma2}\bigskip If $\ \underset{k}{\sup}c_{k}<+\infty$,
then the first part of Theorem \ref{moser-trudinger} holds.
\end{theorem}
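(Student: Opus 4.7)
The plan is to exploit the boundedness assumption $C_0 := \sup_k c_k < +\infty$ directly, without invoking the concentration–compactness Lemma~\ref{lions}, since boundedness of the maximizers precludes any genuine concentration in the exponential. The key observation is that from Lemma~3.2(ii), $u_k$ is radially symmetric and decreasing with respect to $F^0(x)$, so $|u_k(x)|\leq u_k(0)=c_k\leq C_0$ for all $x\in\mathbb{R}^n$ and all $k$. Combined with $\|u_k\|_n^n\leq\|u_k\|_F^n=1$ and $\beta_k\leq\lambda_n$, the argument of $\Phi$ is uniformly bounded:
\[
\beta_k |u_k|^{\frac{n}{n-1}}\bigl(1+\alpha\|u_k\|_n^n\bigr)^{\frac{1}{n-1}}\leq \lambda_n\,C_0^{\frac{n}{n-1}}(1+\alpha)^{\frac{1}{n-1}} =: A.
\]

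First I would use the Taylor expansion $\Phi(t)=\sum_{j=n-1}^{\infty}t^j/j!$ to record the elementary inequality
\[
\Phi(t)\;=\;t^{n-1}\sum_{j=0}^{\infty}\frac{t^{j}}{(j+n-1)!}\;\leq\;\frac{e^{A}}{(n-1)!}\,t^{n-1}\qquad\text{for all } 0\leq t\leq A.
\]
Applying this pointwise with $t=\beta_k|u_k|^{n/(n-1)}(1+\alpha\|u_k\|_n^n)^{1/(n-1)}$ yields the pointwise estimate
\[
\Phi\!\left(\beta_k|u_k|^{\frac{n}{n-1}}(1+\alpha\|u_k\|_n^n)^{\frac{1}{n-1}}\right)\leq \frac{e^A\lambda_n^{n-1}(1+\alpha)}{(n-1)!}\,|u_k|^{n}.
\]

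Next I would integrate over $\mathcal{W}_{R_k}$ and use $\int_{\mathbb{R}^n}|u_k|^n\,dx\leq\|u_k\|_F^n=1$ to conclude
\[
I_{\beta_k}^{\alpha}(u_k)\;\leq\;\frac{e^A\lambda_n^{n-1}(1+\alpha)}{(n-1)!}\;=:\;C,
\]
uniformly in $k$. Since Lemma~3.2(i) states that $\{u_k\}$ is a maximizing sequence for $S$, passing to the limit $k\to\infty$ gives
\[
S\;=\;\lim_{k\to\infty}I_{\beta_k}^{\alpha}(u_k)\;\leq\;C\;<\;+\infty,
\]
which is exactly the first part of Theorem~\ref{moser-trudinger}. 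There is no real obstacle here: the bounded-$c_k$ regime is essentially trivial because the argument of $\Phi$ stays in a compact interval, reducing the exponential-type integral to a multiple of the $L^n$ norm, which is controlled by $\|u_k\|_F=1$. (Alternatively, one could run the concentration-compactness dichotomy of Lemma~\ref{lions} and eliminate the concentrating alternative by bounded $c_k$, but the direct estimate above is cleaner.)
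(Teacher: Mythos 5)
Your proposal is correct. The key inequality $\Phi(t)\le \frac{e^{A}}{(n-1)!}\,t^{n-1}$ for $0\le t\le A$ is valid (it follows from $(m+n-1)!\ge m!\,(n-1)!$), the argument of $\Phi$ is indeed uniformly bounded since $0\le u_k(x)\le u_k(0)=c_k\le C_0$ by radial monotonicity, $\beta_k\le\lambda_n$, and $\|u_k\|_n^n\le\|u_k\|_F^n=1$, and the conclusion then follows from the fact that $\{u_k\}$ is a maximizing sequence. Your route is genuinely more streamlined than the paper's: the paper instead fixes $\varepsilon>0$, chooses $L$ with $u_k\le\varepsilon$ off $\mathcal{W}_L$ (using the radial decay estimate $u_k^n(L)\le(\kappa_n L^n)^{-1}$), and splits $\int\bigl(\Phi(\alpha_k u_k^{n/(n-1)})-\tfrac{\alpha_k^{n-1}u_k^n}{(n-1)!}\bigr)\,dx$ into a near region (controlled by $\sup_k c_k<\infty$) and a far region (controlled by $\varepsilon^{\frac{n^2}{n-1}-n}\|u_k\|_n^n$), then adds back the subtracted $L^n$ term. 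Both arguments rest on exactly the same two ingredients --- pointwise boundedness of $u_k$ and the $L^n$ bound from $\|u_k\|_F=1$ --- but your single global pointwise estimate absorbs the decomposition. What the paper's longer route buys is the explicit identity (4.6) isolating the $\frac{\alpha_k^{n-1}u_k^n}{(n-1)!}$ term, which is reused verbatim in the proof of Theorem 1.2 in Section 5.1; for the purpose of Theorem 4.4 alone your shortcut loses nothing. One cosmetic point worth a sentence in a final write-up: $S$ is defined as a supremum over $\|u\|_F\le 1$ while Lemma 3.2 identifies the limit of $I^{\alpha}_{\beta_k}(u_k)$ with the supremum over $\|u\|_F=1$; these coincide since replacing $u$ by $u/\|u\|_F$ only increases the functional, but the identification should be stated.
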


\begin{proof}
For any $\varepsilon>0$, by using (\ref{2}), there exist some $L$ such that
$u_{k}\left(  x\right)  \leq\varepsilon$ when $x\notin \mathcal{W}_{L}$. It is easy to see that
\[\int_{
\mathbb{R}
^{n}}\left(  \Phi\left(  \alpha_{k}u_{k}^{\frac{n}{n-1}}\right)  -\frac
{\alpha_{k}^{n-1}u_{k}^{n}}{\left(  n-1\right)  !}\right)  dx
=\left(  \int_{\mathcal{W}_{L}}+\int_{
\mathbb{R}
^{n}\backslash\mathcal{W}_{L}}\right)  \left(  \Phi\left(  \alpha_{k}u_{k}^{\frac
{n}{n-1}}\right)  -\frac{\alpha_{k}^{n-1}u_{k}^{n}}{\left(  n-1\right)
!}\right)  dx.
\]
Also
\[
\int_{
\mathbb{R}
^{n}\backslash \mathcal{W}_{L}}\left(  \Phi\left(  \alpha_{k}u_{k}^{\frac{n}{n-1}
}\right)  -\frac{\alpha_{k}^{n-1}u_{k}^{n}}{\left(  n-1\right)  !}\right)
dx=C\int_{
\mathbb{R}
^{n}\backslash \mathcal{W}_{L}}u_{k}^{\frac{n^{2}}{n-1}}dx\leq C\varepsilon^{\frac
{n^{2}}{n-1}-n}\int_{
\mathbb{R}
^{n}}u_{k}^{n}dx=C\varepsilon^{\frac{n^{2}}{n-1}-n}.
\]
Then
\begin{equation}
\int_{
\mathbb{R}
^{n}}\left(  \Phi\left(  \alpha_{k}u_{k}^{\frac{n}{n-1}}\right)  -\frac
{\alpha_{k}^{n-1}u_{k}^{n}}{\left(  n-1\right)  !}\right)  dx=\int_{\mathcal{W}_{L}
}\left(  \Phi\left(  \alpha_{k}u_{k}^{\frac{n}{n-1}}\right)  -\frac{\alpha
_{k}^{n-1}u_{k}^{n}}{\left(  n-1\right)  !}\right)  dx+O\left(  \varepsilon
^{\frac{n^{2}}{n-1}-n}\right).  \label{33}
\end{equation}
By $\underset{k}{\sup}c_{k}<+\infty$, we have
\begin{align*}
\int_{
\mathbb{R}
^{n}}\Phi\left(  \alpha_{k}u_{k}^{\frac{n}{n-1}}\right)  dx  &  =\int_{\mathcal{W}_{L}%
}\left(  \Phi\left(  \alpha_{k}u_{k}^{\frac{n}{n-1}}\right)  -\frac{\alpha
_{k}^{n-1}u_{k}^{n}}{\left(  n-1\right)  !}\right)  dx+\int_{%
\mathbb{R}
^{n}}\frac{\alpha_{k}^{n-1}u_{k}^{n}}{\left(  n-1\right)  !}dx+O\left(
\varepsilon^{\frac{n^{2}}{n-1}-n}\right) \\
&  \leq C\left(  L\right).
\end{align*}
Thus the proof of Theorem \ref{moser-trudinger} in the case of $\underset{k}{\sup}c_{k}<+\infty$ is finished.
\end{proof}

\subsubsection{Blow-up analysis and proof in the case of $\sup_{k}c_{k}=+\infty$}
In the following, we assume $\sup_{k}c_{k}=+\infty$ and perform a blow-up procedure. The method of blow-up analysis will be used to analyze the
asymptotic behavior of the maximizing sequence $\left\{  u_{k}\right\} $, and
the first part of Theorem \ref{moser-trudinger} in the case of $\sup_{k}c_{k}=+\infty$ will be proved.

First, we denote

\[
r_{k}^{n}=\frac{\lambda_{k}}{\mu_{k}c_{k}^{\frac{n}{n-1}}e^{\alpha_{k}%
c_{k}^{\frac{n}{n-1}}}}.
\]
By (\ref{2}), one can find a sufficiently large $L$ such that $u_{k}\leq1$ on $%
\mathbb{R}
^{n}\backslash \mathcal{W}_{L}\,$. Then $\left(  u_{k}-u_{k}\left(  L\right)  \right)
^{+}\in W_{0}^{1,n}\left( \mathcal{W}_{L}\right)  $ and

\[
\int_{\mathcal{W}_{L}}F^{n}( \nabla\left(  u_{k}-u_{k}\left(  L\right)  \right)
^{+})dx\leq1.
\]
By Theorem 1.1 in \cite{Z1}, we know that if \[
\beta<\underset{u\in W_{0}^{1,n}\left(  \mathcal{W}_{L}\right)  }{\inf}\frac{\left\Vert F(
\nabla u)\right\Vert_{n}^{n}}{\left\Vert u\right\Vert _{n}^{n}},
\]
then

\[
\int_{\mathcal{W}_{L}}\exp\{{\lambda_{n}\left(  u_{k}
-u_{k}\left(  L\right)  \right)  ^{\frac{n}{n-1}}\left(  1+\beta\left\Vert u_{k}-u_{k}\left(
L\right)  \right\Vert _{n}^{n}\right)  ^{\frac{1}{n-1}}}\}dx\leq C\left(  L\right).
\]

For any $q<\lambda_{n}\left(  1+\beta\left\Vert u_{k}-u_{k}\left(  L\right)
\right\Vert _{n}^{n}\right)  ^{\frac{1}{n-1}}$, there exists a constant
$C\left(  q\right)  \,\ $such that

\[
qu_{k}^{\frac{n}{n-1}}\leq\lambda_{n}\left(
\left(  u_{k}-u_{k}\left(  L\right)  \right)  ^{+}\right)  ^{\frac{n}{n-1}}
\left(  1+\beta\left\Vert u_{k}
-u_{k}\left(  L\right)  \right\Vert _{n}^{n}\right)  ^{\frac{1}{n-1}}
+C\left(  q\right).
\]
Then

\begin{equation}
\int_{\mathcal{W}_{L}}\exp\{{qu_{k}^{\frac{n}{n-1}}}\}dx\leq C\left(  L,q\right)  .
\label{add7}%
\end{equation}
Taking some $0<A<1$ such that
\[
\left(  1-A\right)  \beta_{k}\left(  1+\alpha\left\Vert u_{k}\right\Vert
_{{n}}^{{n}}\right)  ^{\frac{1}{n-1}}<\lambda_{n}\left(  1+\beta\left\Vert
u_{k}-u_{k}\left(  L\right)  \right\Vert _{n}^{n}\right)  ^{\frac{1}{n-1}},
\]
then
\begin{align*}
&  \lambda_{k}e^{-A\beta_{k}\left(  1+\alpha\left\Vert u_{k}\right\Vert _{{n}
}^{{n}}\right)  ^{\frac{1}{n-1}}c_{k}^{\frac{n}{n-1}}}\\
&  =e^{-A\beta_{k}\left(  1+\alpha\left\Vert u_{k}\right\Vert _{{n}}^{{n}
}\right)  ^{\frac{1}{n-1}}c_{k}^{\frac{n}{n-1}}}\left[  \left(  \int_{
\mathbb{R}
^{n}\backslash \mathcal{W}_{L}}+\int_{\mathcal{W}_{L}}\right)  u_{k}^{\frac{n}{n-1}}\Phi^{\prime
}\left(  \alpha_{k}u_{k}^{\frac{n}{n-1}}\right)  dx\right]  \\
&  \leq Ce^{-A\beta_{k}\left(  1+\alpha\left\Vert u_{k}\right\Vert _{{n}}
^{{n}}\right)  ^{\frac{1}{n-1}}c_{k}^{\frac{n}{n-1}}}\left(  \int_{
\mathbb{R}
^{n}\backslash \mathcal{W}_{L}}u_{k}^{n}dx+\int_{ \mathcal{W}_{L}}u_{k}^{\frac{n}{n-1}}e^{\beta
_{k}\left(  1+\alpha\left\Vert u_{k}\right\Vert _{{n}}^{{n}}\right)
^{\frac{1}{n-1}}u_{k}^{\frac{n}{n-1}}}dx\right)  \\
&  \leq C\int_{ \mathcal{W}_{L}}u_{k}^{\frac{n}{n-1}}e^{\left(  1-A\right)  \beta
_{k}\left(  1+\alpha\left\Vert u_{k}\right\Vert _{{n}}^{{n}}\right)
^{\frac{1}{n-1}}u_{k}^{\frac{n}{n-1}}}dx+o\left(  1\right)  .
\end{align*}
Since $u_{k}$ converges strongly in $L^{s}\left( \mathcal{W}_{L}\right)  $ for any
$s>1$, by (\ref{add7}), it follows that

\[
\lambda_{k}\leq Ce^{A\alpha_{k}c_{k}^{\frac{n}{n-1}}}.
\]
Then for any \thinspace$q>0$, we have
\begin{equation}
\ r_{k}^{n}\leq Ce^{\left(  A-1\right)  \alpha_{k}c_{k}^{\frac{n}{n-1}}%
}=o\left(  c_{k}^{-q}\right)  . \label{rk}
\end{equation}

\bigskip

Set
\[
\left\{
\begin{array}
[c]{c}
m_{k}\left(  x\right)  =u_{k}\left(  r_{k}x\right)  ,\\
\phi_{k}\left(  x\right)  =\frac{m_{k}\left(  x\right)  }{c_{k}},\\
\psi_{k}\left(  x\right)  =\frac{n}{n-1}\alpha_{k}c_{k}^{\frac{1}{n-1}}\left(
m_{k}-c_{k}\right)  ,
\end{array}
\right.
\]
where $m_{k},\phi_{k}$ and $\psi_{k}$ are defined on $\Omega_{k}:=\left\{
x\in\mathbb{R}^{n}:r_{k}x\in \mathcal{W}_{1}\right\}  $. From (\ref{equation}) and (\ref{rk}), it is easy to see that
$\phi_{k}\left(  x\right)$  and $\psi_{k}\left(  x\right) $ respectively satisfy

\begin{align}
-Q_{n}\phi_{k}\left(  x\right)   &  =\frac{r_{k}^{n}}{c_{k}^{n-1}%
}\left(  \mu_{k}\lambda_{k}^{-1}m_{k}^{\frac{1}{n-1}}\Phi^{\prime}\left\{
\alpha_{k}m_{k}^{\frac{n}{n-1}}\right\}  +\left(  \gamma_{k}-1\right)
m_{k}^{{n}-1}\right) \nonumber\\
&  =\left(  \frac{1}{c_{k}^{n}}\phi_{k}^{\frac{1}{n-1}}\left(  x\right)
\Phi^{\prime}\left\{  \alpha_{k}\left(  m_{k}^{\frac{n}{n-1}}-c_{k}^{\frac
{n}{n-1}}\right)  \right\}  +o\left(  1\right)  \right)  \label{equ 1},
\end{align}

\begin{align}
-Q_{n}\psi_{k}\left(  x\right)   &  =\left(  \frac{n\alpha_{k}}%
{n-1}\right)  ^{n-1}c_{k}r_{k}^{n}\left(  \mu_{k}\lambda_{k}^{-1}m_{k}%
^{\frac{1}{n-1}}\Phi^{\prime}\left\{  \alpha_{k}m_{k}^{\frac{n}{n-1}}\right\}
+\left(  \gamma_{k}-1\right)  m_{k}^{n-1}\right) \nonumber\\
&  =\left(  \frac{n\alpha_{k}}{n-1}\right)  ^{n-1}\left(  \left(  \frac{m_{k}%
}{c_{k}}\right)  ^{\frac{1}{n-1}}e^{\alpha_{k}\left(  m_{k}^{\frac{n}{n-1}%
}-c_{k}^{\frac{n}{n-1}}\right)  }+o\left(  1\right)  \right)  . \label{equ 2}%
\end{align}

Now let us analyze the limit function of $\phi_{k}(x)$ and $\psi_{k}(x)$.
Because $u_{k}$ is bounded in $W^{1,n}\left(
\mathbb{R}
^{n}\right)  $, there exists a subsequence such that $u_{k}\rightharpoonup u$
weakly in $W^{1,n}\left(
\mathbb{R}
^{n}\right)  $. Since the right side of (\ref{equ 1}) vanishes as
$k\rightarrow\infty$, then $\phi_{k}\rightarrow\phi$ in $C_{loc}
^{1}\left(
\mathbb{R}
^{n}\right)  $ as $k\rightarrow\infty$, by applying the classical eatimates
\cite{Tolksdorf}, we have

\[
-Q_{n}\phi(x)=0\text{ in }
\mathbb{R}
^{n}.\]
Since $\phi_{k}\left(  0\right)  =1$, Liouville type theorem (see \cite{HKM}) asserts that
$\phi\equiv1$ in $
\mathbb{R}
^{n}$.

\medskip

Now we  analyze the asymptotic behavior of $\psi_{k}$. By (\ref{rk}) and $\phi_{k}\left(  x\right)  \leq1$, we can rewrite (\ref{equ 2})
as
\[
-Q_{n}\psi_{k}\left(  x\right)  =O\left(  1\right)  .
\]
By Theorem 7 in \cite{S4}, we have $osc_{\mathcal{W}_{L}}\psi_{k}\leq C\left(
L\right)  $ for any $L>0.$ Then from the result of \cite{Tolksdorf}, one can get
$\left\Vert \psi_{k}\right\Vert _{C^{1,\delta}\left(  \mathcal{W}_{L}\right)  }\leq
C\left(  L\right)  $ for some $\delta >0$. Thus $\psi_{k}$ converges in $C_{loc}^{1}\left(
\mathcal{W}_{L}\right)  $ and $m_{k}-c_{k}\rightarrow0$ in $C_{loc}^{1}\left(
\mathcal{W}_{L}\right)  $.

It is easy to see that
\[
m_{k}^{\frac{n}{n-1}}=c_{k}^{\frac{n}{n-1}}\left(  1+\frac{m_{k}-c_{k}}{c_{k}
}\right)  ^{\frac{n}{n-1}}=c_{k}^{\frac{n}{n-1}}\left(  1+\frac{n}{n-1}
\frac{m_{k}-c_{k}}{c_{k}}+O\left(  \frac{1}{c_{k}^{2}}\right)  \right) ,
\]
then
\begin{align}
\alpha_{k}\left(  m_{k}^{\frac{n}{n-1}}-c_{k}^{\frac{n}{n-1}}\right)
&=\alpha_{k}c_{k}^{\frac{n}{n-1}}\left(  \frac{n}{n-1}\frac{m_{k}-c_{k}}{c_{k}
}+O\left(  \frac{1}{c_{k}^{2}}\right)  \right) \label{5}\\
&  =\psi_{k}\left(  x\right)  +o\left(  1\right)  \rightarrow\psi\left(
x\right)  \text{ in }C_{loc}^{0}(\mathbb{R}^{n}).\nonumber
\end{align}
Thus
\begin{equation}
-Q_{n}\psi(x)=(  \frac{nc_{n}}{n-1})  ^{n-1}e^{
\psi(x)}  , \label{6}
\end{equation}
where $c_{n}=\underset{k\rightarrow\infty}{\lim}\alpha_{k}=\lambda_{n}(
1+\alpha\underset{k\rightarrow\infty}{\lim}\left\Vert u_{k}\right\Vert _{{n}
}^{{n}})  ^{\frac{1}{n-1}}$.

Since $\psi$ is radially symmetric and decreasing, we know that
(\ref{6}) has only one solution. Thus we have
\[
\psi\left(  x\right)  =-n\log\left(  1+\frac{c_{n}}{n^{\frac{n}{n-1}}
} F^{0}(x) ^{\frac{n}{n-1}}\right) .
\]
Therefore
\begin{align}
\int_{
\mathbb{R}
^{n}}e^{\psi\left(  x\right)  }dx  &  =(n-1)\kappa_{n}\left(
\frac{n^{\frac{n}{n-1}}}{c_{n}}\right)  ^{n-1}\int_{0}^{\infty}\left(
1+t\right)  ^{-n}t^{n-2}dt\nonumber\\
&  =(n-1)\kappa_{n}\left(  \frac{n^{\frac{n}{n-1}}}{c_{n}}\right)
^{n-1}\cdot\frac{1}{n-1}=\frac{1}{1+\alpha\underset{k\rightarrow\infty}{\lim
}\left\Vert u_{k}\right\Vert _{{n}}^{{n}}}. \label{add3}
\end{align}

For any $A>1$, denote $u_{k}^{A}=\min\left\{  u_{k},\frac{c_{k}}{A}\right\} $.

\begin{lemma}\label{lem4.4}
\bigskip For any $A>1$, we have
\[
\underset{k\rightarrow\infty}{\lim\sup}\int_{
\mathbb{R}
^{n}}\left(F^{n}\left (\nabla u_{k}
^{A}\right)+ \left\vert u_{k}^{A}\right\vert ^{n}\right)  dx\leq1-\frac{A-1}{A}\frac{1}{1+\alpha
\underset{k\rightarrow\infty}{\lim}\left\Vert u_{k}\right\Vert _{{n}}^{{n}}}.
\]

\end{lemma}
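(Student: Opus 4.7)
The plan is to test the Euler--Lagrange equation~(\ref{equation}) with the admissible truncation $\varphi_k=(u_k-c_k/A)^+\in W_0^{1,n}(\mathcal{W}_{R_k})$. Since $u_k$ is radial and decreasing in $F^0$, this function is supported in the Wulff ball $\mathcal{W}_{r_k^A}$ where $u_k(r_k^A)=c_k/A$, and $\nabla\varphi_k=\nabla u_k$ on $\{u_k>c_k/A\}$, zero elsewhere. Using $\langle F_\xi(\nabla u_k),\nabla u_k\rangle=F(\nabla u_k)$ from Lemma~\ref{2-01}(iii), the test yields
\[
\int_{\{u_k>c_k/A\}}\!F^n(\nabla u_k)\,dx = \underbrace{\mu_k\lambda_k^{-1}\!\!\int\!\! u_k^{\frac{1}{n-1}}(u_k-c_k/A)^+\Phi'(\alpha_ku_k^{\frac{n}{n-1}})\,dx}_{I_k} + \underbrace{(\gamma_k-1)\!\!\int\!\! u_k^{n-1}(u_k-c_k/A)^+\,dx}_{II_k}.
\]

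For the error piece, $(u_k-c_k/A)^+\leq u_k$ gives $|II_k|\leq|\gamma_k-1|\int_{\mathcal{W}_{r_k^A}}u_k^n\,dx$. The pointwise decay $u_k(r)\leq(\kappa_nr^n)^{-1/n}$ from~(\ref{2}), evaluated at $r=r_k^A$, forces $|\mathcal{W}_{r_k^A}|\leq A^n/c_k^n\to 0$, while Gagliardo--Nirenberg applied to $u_k$ (with $\|u_k\|_n,\|F(\nabla u_k)\|_n\leq 1$) yields uniform $L^p$-bounds for every $p\geq n$. Hence $\{u_k^n\}$ is equi-integrable, so $\int_{\mathcal{W}_{r_k^A}}u_k^n\,dx\to 0$ and $II_k=o(1)$.

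For the main term $I_k$ only a lower bound is needed. After the change of variables $x=r_ky$, restrict the integration to a fixed Wulff ball $\mathcal{W}_L=\{F^0(y)\leq L\}$ for large $L$. The blow-up information $m_k(y)\to c_k$ in $C^1_{loc}$, the expansion~(\ref{5}), and the asymptotic $\Phi'(t)=e^t(1+o(1))$ as $t\to\infty$ give, for any $\varepsilon>0$ and all $k$ large,
\[
I_k\geq (1-1/A)(1-\varepsilon)^3\,\mu_k\lambda_k^{-1}c_k^{\frac{n}{n-1}}e^{\alpha_kc_k^{\frac{n}{n-1}}}r_k^n\!\!\int_{\mathcal{W}_L}\!e^{\psi_k(y)}\,dy + o_k(1).
\]
The built-in identity $\mu_k\lambda_k^{-1}c_k^{n/(n-1)}e^{\alpha_kc_k^{n/(n-1)}}r_k^n=1$ from the definition of $r_k^n$ collapses the prefactor to $1$; sending $k\to\infty$ (using $\psi_k\to\psi$ in $C^0_{loc}$), then $L\to\infty$ (using~(\ref{add3})), and finally $\varepsilon\to 0$ delivers $\liminf_kI_k\geq(A-1)/(A(1+\alpha\lim\|u_k\|_n^n))$.

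Putting it together, $\nabla u_k^A=\nabla u_k\chi_{\{u_k\leq c_k/A\}}$ and $|u_k^A|\leq|u_k|$ give $\int F^n(\nabla u_k^A)+|u_k^A|^n\,dx\leq 1-\int_{\{u_k>c_k/A\}}F^n(\nabla u_k)\,dx$, and passing to the $\limsup$ yields the desired inequality. I expect the main technical obstacle to be the lower bound on $I_k$: one must carefully manage the expansion of $\alpha_km_k^{n/(n-1)}$, the exponential asymptotics of $\Phi'$, and the multiple powers of $c_k$, $\alpha_k$, $r_k$, and $e^{\alpha_kc_k^{n/(n-1)}}$, so that the cancellation built into the definition of $r_k$ collapses the prefactor to the clean constant $(1-1/A)\int e^\psi\,dy$.
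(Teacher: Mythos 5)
Your proposal is correct and follows essentially the same route as the paper's proof: both test the Euler--Lagrange equation~(\ref{equation}) with $(u_k-c_k/A)^+$, identify the main term with $\frac{A-1}{A}\int e^{\psi}\,dx$ via the built-in cancellation from the definition of $r_k$ and the convergence $\psi_k\to\psi$, and close via the elementary decomposition of $\int F^n(\nabla u_k^A)+|u_k^A|^n\,dx$. The only cosmetic differences are that you isolate the $(\gamma_k-1)$ term as an explicit error $II_k$ controlled by Hölder plus the measure bound $|\{u_k>c_k/A\}|\lesssim c_k^{-n}$, whereas the paper invokes strong $L^s$-convergence of $u_k$ on a fixed ball; both are valid.
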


\begin{proof}
\bigskip Since $\left\vert \left\{  x:u_{k}\geq\frac{c_{k}}{A}\right\}
\right\vert \left\vert \frac{c_{k}}{A}\right\vert ^{n}\leq\int_{\left\{
u_{k}\geq\frac{c_{k}}{A}\right\}  }\left\vert u_{k}\right\vert ^{n}dx\leq1$,
there exises a sequence $\rho_{k}\rightarrow0$ such that
\[
\left\{  x:u_{k}\geq\frac{c_{k}}{A}\right\}  \subset \mathcal{W}_{\rho_{k}}.
\]
Since for any $s>1$, $u_{k}$ converges in $L^{s}\left(  \mathcal{W}_{1}\right)  $, then we
obtain
\[
\underset{k\rightarrow\infty}{\lim}\int_{\left\{  u_{k}\geq\frac{c_{k}}%
{A}\right\}  }\left\vert u_{k}^{A}\right\vert ^{s}dx\leq\underset
{k\rightarrow\infty}{\lim}\int_{\left\{  u_{k}\geq\frac{c_{k}}{A}\right\}
}\left\vert u_{k}\right\vert ^{s}dx=0.
\]
Thus for any $s>0$, it follows
\[
\underset{k\rightarrow\infty}{\lim}  \int_{
\mathbb{R}
^{n}}\left(  u_{k}-\frac{c_{k}}{A}\right)  ^{+}\left\vert u_{k}\right\vert
^{s}dx=0.
\]

Testing (\ref{equation}) with $\left(  u_{k}-\frac{c_{k}}{A}\right)  ^{+}$, we obtain
\begin{align*}
&  \int_{
\mathbb{R}
^{n}}\left( F^{n}\left(\nabla\left(  u_{k}-\frac{c_{k}}{A}\right)
^{+}\right)+\left(  u_{k}-\frac{c_{k}}{A}\right)  ^{+}\left\vert
u_{k}\right\vert ^{n-1}\right)  dx\\
&  =\int_{\mathbb{R}^{n}}\left(  u_{k}-\frac{c_{k}}{A}\right)  ^{+}\mu_{k}\lambda_{k}^{-1}%
u_{k}^{\frac{1}{n-1}}\Phi^{\prime}\left\{  \alpha_{k}u_{k}^{\frac{n}{n-1}%
}\right\}  dx+o\left(  1\right)  \\
&  \geq\int_{\mathcal{W}_{Rr_{k}}}\left(  u_{k}-\frac{c_{k}}{A}\right)  ^{+}\mu
_{k}\lambda_{k}^{-1}u_{k}^{\frac{1}{n-1}}\exp\left\{  \alpha_{k}u_{k}%
^{\frac{n}{n-1}}\right\}  dx+o\left(  1\right)  \\
&  =\int_{\mathcal{W}_{R}}\frac{\left(  m_{k}-\frac{c_{k}}{A}\right)  }{c_{k}}\left(
\frac{m_{k}-c_{k}}{c_{k}}+1\right)  ^{\frac{1}{n-1}}\exp\left\{  \psi
_{k}\left(  x\right)  +o\left(  1\right)  \right\}  dx+o\left(  1\right)  \\
&  \geq\frac{A-1}{A}\int_{\mathcal{W}_{R}}e^{\psi\left(  x\right)  }dx.
\end{align*}
Taking limits $R\rightarrow\infty$ and $k\rightarrow\infty$, then it follows from (\ref{add3}) that
\[
\underset{k\rightarrow\infty}{\lim\inf}\int_{
\mathbb{R}
^{n}}\left( F^{n}\left(\nabla\left(  u_{k}-\frac{c_{k}}{A}\right)
^{+}\right)+\left(  u_{k}-\frac{c_{k}}{A}\right)  ^{+}\left\vert
u_{k}\right\vert ^{n-1}\right)  dx\geq\frac{A-1}{A}\frac{1}{\left(
1+\alpha\underset{k\rightarrow\infty}{\lim}\left\Vert u_{k}\right\Vert _{{n}
}^{{n}}\right)  }.
\]
Hence
\begin{align*}
&  \int_{
\mathbb{R}
^{n}}\left( F^{n}(\nabla u_{k}^{A})+\left\vert u_{k}
^{A}\right\vert ^{n}\right)  dx\\
=&1-\int_{
\mathbb{R}
^{n}}\left(  F^{n}\left(\nabla\left(  u_{k}-\frac{c_{k}}{A}\right)
^{+}\right)+\left(  u_{k}-\frac{c_{k}}{A}\right)  ^{+}\left\vert
u_{k}\right\vert ^{n-1}\right)  dx\\
& \ \ +\int_{
\mathbb{R}
^{n}}\left(  u_{k}-\frac{c_{k}}{A}\right)  ^{+}\left\vert u_{k}\right\vert
^{n-1}dx-\int_{\left\{  u_{k}>\frac{c_{k}}{A}\right\}  }\left\vert
u_{k}\right\vert ^{n}dx+\int_{\left\{  u_{k}>\frac{c_{k}}{A}\right\}
}\left\vert u_{k}^{A}\right\vert ^{n}dx\\
\leq& 1-\frac{A-1}{A}\frac{1}{1+\alpha\underset{k\rightarrow\infty}{\lim
}\left\Vert u_{k}\right\Vert _{{n}}^{{n}}}+o\left(  1\right).
\end{align*}
Then the proof of Lemma \ref{lem4.4} is completed.
\end{proof}

\begin{lemma}\label{lem4.5}
$\underset{k\rightarrow\infty}{\lim}\left\Vert u_{k}\right\Vert _{{n}}^{{n}}=0$.
\end{lemma}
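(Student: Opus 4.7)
The plan is to extract a quantitative constraint on $L:=\lim_k\|u_k\|_n^n$ (taken along a subsequence) from Lemma~\ref{lem4.4} by comparing the truncated energy $\|u_k^A\|_F^n$ against the normalization $\|u_k\|_F^n=1$, and then letting $A\to\infty$ after $k\to\infty$.

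Since $u_k^A=\min\{u_k,c_k/A\}$ is constant on $\{u_k>c_k/A\}$, one has $F(\nabla u_k^A)=F(\nabla u_k)\chi_{\{u_k\le c_k/A\}}$, and therefore
\[
\|u_k^A\|_F^n=1-\int_{\{u_k>c_k/A\}}F^n(\nabla u_k)\,dx+\bigl(\|u_k^A\|_n^n-\|u_k\|_n^n\bigr).
\]
To control the last bracket I would argue, as in the proof of Lemma~\ref{lem4.4}, that $\{u_k\ge c_k/A\}\subset\mathcal{W}_{\rho_k}$ with $\rho_k\to0$. Since $u_k\rightharpoonup u$ in $W^{1,n}(\mathbb{R}^n)$, Rellich gives strong convergence in $L^n(\mathcal{W}_1)$, hence $\int_{\{u_k>c_k/A\}}|u_k|^n\,dx\to 0$ by absolute continuity; combined with the elementary bound $(c_k/A)^n|\{u_k>c_k/A\}|\le\int_{\{u_k>c_k/A\}}|u_k|^n\,dx$ this gives $\|u_k^A\|_n^n=\|u_k\|_n^n+o_k(1)$.

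Inserting this identity into the display above, invoking Lemma~\ref{lem4.4}, and using the trivial upper bound $\int_{\{u_k>c_k/A\}}F^n(\nabla u_k)\,dx\le\int_{\mathbb{R}^n}F^n(\nabla u_k)\,dx=1-\|u_k\|_n^n$, I would obtain
\[
1-L\ge\frac{A-1}{A(1+\alpha L)}\qquad\text{for every }A>1.
\]
Sending $A\to\infty$ produces $(1-L)(1+\alpha L)\ge 1$, which rearranges to $L\bigl[(\alpha-1)-\alpha L\bigr]\ge 0$. Because $0\le\alpha<1$ and $L\ge 0$, the bracket is bounded above by $\alpha-1<0$, so the inequality forces $L=0$, which is the desired conclusion.

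The only delicate point is the $L^n$-invariance under truncation, $\|u_k^A\|_n^n=\|u_k\|_n^n+o_k(1)$: one must know that truncating at level $c_k/A$ loses no $L^n$-mass in the limit. This rests on both the shrinking of the superlevel set $\{u_k\ge c_k/A\}$ to a point (from radial monotonicity together with $\|u_k\|_n\le 1$) and on local strong $L^n$-convergence (from the weak $W^{1,n}$ bound via Rellich). Both ingredients have already been deployed in the proof of Lemma~\ref{lem4.4}, so I do not expect any genuinely new obstruction.
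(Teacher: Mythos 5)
Your proof is correct, and it takes a genuinely different and more economical route than the paper's. The paper's argument for Lemma~\ref{lem4.5} proceeds by a concentration dichotomy: in the Sobolev-normalized concentrating case the conclusion is immediate from Definition~\ref{SNC} plus Rellich, while otherwise the authors assume $L:=\lim_k\|u_k\|_n^n\neq 0$, use Lemma~\ref{lem4.4} to make $\|u_k^A\|_F^n$ strictly less than $1$ by a quantified margin, feed this into the anisotropic Moser--Trudinger inequality for unbounded domains (Theorem~1.2 of \cite{ZZ2}) to gain integrability of $\Phi(t\alpha_k|u_k^A|^{n/(n-1)})$ for some $t>1$, then run a case analysis to show $Q_n u_k\in L^r$ for some $r>1$, and finally invoke the elliptic bound of Lemma~\ref{lem2.2} to conclude $u_k$ is bounded near the origin, contradicting $\sup_k c_k=+\infty$.

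Your argument bypasses all of this external machinery. You observe that, since the superlevel set $\{u_k>c_k/A\}$ shrinks to a point and $u_k\to u$ strongly in $L^n$ locally, truncation at level $c_k/A$ costs no $L^n$-mass in the limit; so $\|u_k^A\|_F^n \ge \|u_k^A\|_n^n = \|u_k\|_n^n+o_k(1)$ gives the lower bound $\liminf_k\|u_k^A\|_F^n\ge L$. Pairing this with the upper bound of Lemma~\ref{lem4.4} and sending $A\to\infty$ produces the algebraic constraint $(1-L)(1+\alpha L)\ge 1$, equivalently $L\bigl[(\alpha-1)-\alpha L\bigr]\ge 0$; because $\alpha<1$ the bracket is strictly negative whenever $L\ge 0$, so $L=0$. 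I checked the identity $\|u_k^A\|_F^n = 1-\int_{\{u_k>c_k/A\}}F^n(\nabla u_k)\,dx + \bigl(\|u_k^A\|_n^n-\|u_k\|_n^n\bigr)$, the claim $\|u_k^A\|_n^n=\|u_k\|_n^n+o_k(1)$ (both ingredients are indeed already present in the proof of Lemma~\ref{lem4.4}), and the limiting algebra; all are sound. What your approach buys is a short, self-contained deduction from Lemma~\ref{lem4.4} alone, needing neither the concentration dichotomy, nor Theorem~1.2 of \cite{ZZ2}, nor the elliptic bootstrap via Lemma~\ref{lem2.2}. One small wording point: you should state (as you implicitly do) that $L$ is taken along a subsequence, and note at the end that since every subsequence of $\|u_k\|_n^n$ admits a further subsequence tending to $0$, the full sequence tends to $0$.
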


\begin{proof}
If $\{u_{k}\}$ is a Sobolev-normalized concentrating sequence, then $\underset{k\rightarrow\infty}{\lim}\left\Vert u_{k}\right\Vert _{{n}}^{{n}}=0$.
If $\{u_{k}\}$ is not a Sobolev-normalized concentrating sequence, and $\underset{k\rightarrow\infty}{\lim}\left\Vert u_{k}\right\Vert _{{n}}^{{n}}\neq0$. For  $A$ large enough, there exist
some constant $\varepsilon_{0}>0$ such that
\[
\int_{%
\mathbb{R}
^{n}}\left( F^{n}(\nabla u_{k}^{A})+\left\vert u_{k}%
^{A}\right\vert ^{n}\right)  dx=1-\frac{1}{  1+\left(  \alpha
+\varepsilon_{0}\right)  \underset{k\rightarrow\infty}{\lim}\left\Vert u_{k}\right\Vert _{{n}%
}^{{n}}  }<1.
\]
By Theorem 1.2 in \cite{ZZ2}, we have $\int_{
\mathbb{R}
^{n}}\Phi(  q\lambda_{n}\vert u_{k}^{A}\vert ^{\frac{n}{n-1}
})  dx<+\infty$, provided
$$q< \left( \frac{1+\left(
\alpha+\varepsilon_{0}\right)  \underset{k\rightarrow\infty}{\lim}\left\Vert u_{k}\right\Vert
_{{n}}^{{n}}}{\left(
\alpha+\varepsilon_{0}\right)  \underset{k\rightarrow\infty}{\lim}\left\Vert u_{k}\right\Vert
_{{n}}^{{n}}}\right)^{\frac{1}{n-1}}.$$

Since $\alpha<1$, $\left\Vert u_{k}\right\Vert _{F}=1$ and
$\underset{k\rightarrow\infty}{\lim}\left\Vert u_{k}\right\Vert _{{n}}^{{n}}\neq0$, one can take
some $\varepsilon_{0}$ such that $\left(  \alpha+\varepsilon_{0}\right)
\underset{k\rightarrow\infty}{\lim}\left\Vert u_{k}\right\Vert _{{n}}^{{n}}<1$, thus
\[
\left(  1+\alpha\underset{k\rightarrow\infty}{\lim}\left\Vert u_{k}\right\Vert _{{n}}^{{n}%
}\right)  ^{\frac{1}{n-1}}<\left(  \frac{1+\left(  \alpha+\varepsilon
_{0}\right)  \underset{k\rightarrow\infty}{\lim}\left\Vert u_{k}\right\Vert _{{n}}^{{n}}%
}{\left(  \alpha+\varepsilon_{0}\right)  \underset{k\rightarrow\infty}{\lim}\left\Vert
u_{k}\right\Vert _{{n}}^{{n}}}\right)  ^{\frac{1}{n-1}}.
\]
Thus for some $t>1$, we have
\begin{equation}
\int_{
\mathbb{R}
^{n}}\Phi\left(  t\alpha_{k}\left\vert u_{k}^{A}\right\vert
^{\frac{n}{n-1}}\right)  dx<+\infty.\label{add}
\end{equation}

Next we claim that $Q_{n}u_{k}\in L^{r} $ for some $r>1$. When $\int_{\left\{  u_{k}>\frac{c_{k}}{A}\right\}  }F^{n}(\nabla
u_{k})dx\rightarrow0$ as $k\rightarrow\infty$, it is easy to prove the above claim by (\ref{add}) and the classical anisotropic Moser-Trudinger inequalities on bounded domains. When $\int_{\left\{  u_{k}>\frac{c_{k}}{A}\right\} }
F^{n}(\nabla
u_{k})dx\geq C$ for some $C>0$, we split $u_{k}$ as
$u_{k}^{1}+u_{k}^{2}$ with $u_{k}^{1}\rightarrow C\delta_{0}$ and
$\int_{\left\{  u_{k}>\frac{c_{k}}{A}\right\}  }F^{n}(\nabla u_{k}
^{2})dx\rightarrow0$.\ \ \  Then it follows from $\alpha<1$ that
\begin{align*}
1+\alpha\left\Vert u_{k}\right\Vert _{n}^{n}  & =1+\alpha\left\Vert u_{k}
^{2}\right\Vert _{n}^{n}+o_{k}\left(  1\right)  <\frac{1}{1-\left\Vert
u_{k}^{2}\right\Vert _{F}^{n}}+o_{k}\left(  1\right)  \\
& \leq\frac{1}{\left\Vert F(\nabla u_{k}^{1})\right\Vert _{n}^{n}}+o_{k}\left(
1\right)  \leq\frac{1}{\left\Vert F( \nabla u_{k})\right\Vert _{L^{n}\left(
\left\{  u_{k}>\frac{c_{k}}{A}\right\}  \right)  }^{n}}+o_{k}\left(  1\right)
.
\end{align*}
Thus there exist some constant $s>1$ such that $\left(  1+\alpha\left\Vert u_{k}\right\Vert _{n}^{n}\right)  s$ $\leq\frac
{1}{\left\Vert F(\nabla u_{k})\right\Vert _{L^{n}\left(  \left\{  u_{k}%
>\frac{c_{k}}{A}\right\}  \right)  }^{n}}$. Therefore by (\ref{add}) and the classical anisotropic
Moser-Trudinger inequality on the bounded domain, the claim is proved.

Based on the claim above and Lemma \ref{lem2.2}, we obtain that $u_{k}$ is
bounded near $0$, which contradicts the assumption that $\sup_{k}c_{k}=+\infty$. Thus $\underset{k\rightarrow\infty}{\lim}\left\Vert u_{k}\right\Vert _{{n}}^{{n}}=0$ and the proof of Lemma \ref{lem4.5} has been finished.
\end{proof}

\begin{remark}
\label{remark}\ From Lemma \ref{lem4.5}, one can obtain the following results.
\[
\underset{k}{\lim}\alpha_{k}=\lambda_{n},\ \underset{k}{\lim}\mu_{k}=1,
\]%
\[
\underset{k\rightarrow\infty}{\lim\sup}\int_{%
\mathbb{R}
^{n}}\left(  F^{n}(\nabla u_{k}^{A})+\left\vert u_{k}%
^{A}\right\vert ^{n}\right)  dx=\frac{1}{A},\,
\]%
\[
\psi\left(  x\right)  =-n\log\left(  1+\kappa_{n}
^{\frac{1}{n-1}}F^{0}(x)  ^{\frac{n}{n-1}}\right)  ,
\]
and
\begin{align}
&  \underset{R\rightarrow\infty}{\lim}\underset{k\rightarrow\infty}{\lim}
\frac{1}{\lambda_{k}}\int_{\mathcal{W}_{Rr_{k}}}u_{k}^{\frac{n}{n-1}}\exp\left(
\alpha_{k}u_{k}^{\frac{n}{n-1}}\right)  dx=\underset{R\rightarrow\infty}{\lim
}\underset{k\rightarrow\infty}{\lim}\frac{1}{\mu_{k}}\int_{\mathcal{W}_{R}}
e^{\psi\left(  x\right)  }dx\nonumber\\
=&\underset{k\rightarrow\infty}{\lim}\frac{1}{ \mu_{k}\left(1+\alpha\underset
{k}{\lim}\left\Vert u_{k}\right\Vert _{{n}}^{{n}}\right)
}=1.\label{add9}
\end{align}

\end{remark}

\begin{corollary}
\label{tent to 0}We have $\underset{k\rightarrow\infty}{\lim}\int_{
\mathbb{R}
^{n}\backslash \mathcal{W}_{\delta}}\left(  F^{n}(\nabla u_{k})+\left\vert u_{k}\right\vert ^{n}\right)  dx=0$ for any $\delta>0$, and
then $\underset{k\rightarrow\infty}{\lim}u_{k}\equiv0$.
\end{corollary}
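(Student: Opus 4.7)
The overall strategy is to exploit the radial monotonicity of $u_{k}$ (with respect to $F^{0}$) together with the truncation estimate of Lemma \ref{lem4.4} and Remark \ref{remark}. The key point is that outside any fixed Wulff ball $\mathcal{W}_{\delta}$, the function $u_{k}$ is eventually so small compared to $c_{k}/A$ that $u_{k}$ and its truncation $u_{k}^{A}:=\min\{u_{k},c_{k}/A\}$ coincide there.

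\emph{Step 1: Outside any fixed Wulff ball, $u_{k}\to 0$ pointwise.} Because $u_{k}$ is decreasing in $r=F^{0}(x)$, for every $\delta>0$ one has
\[
u_{k}(\delta)^{n}\,\kappa_{n}\delta^{n} \;=\; u_{k}(\delta)^{n}|\mathcal{W}_{\delta}|
\;\leq\; \int_{\mathcal{W}_{\delta}} u_{k}^{n}\,dx \;\leq\; \|u_{k}\|_{n}^{n},
\]
and by Lemma \ref{lem4.5} the right-hand side tends to $0$. Hence $u_{k}(\delta)\to 0$ while $c_{k}\to\infty$, so for any fixed $A>1$ and all sufficiently large $k$ we have $u_{k}(x)\leq u_{k}(\delta)<c_{k}/A$ for every $x$ with $F^{0}(x)\geq\delta$. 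In particular $u_{k}^{A}=u_{k}$ a.e.\ on $\mathbb{R}^{n}\setminus\mathcal{W}_{\delta}$, and consequently $\nabla u_{k}^{A}=\nabla u_{k}$ a.e.\ there.

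\emph{Step 2: Compare with the truncation estimate.} For $k$ large,
\[
\int_{\mathbb{R}^{n}\setminus\mathcal{W}_{\delta}}\!\bigl(F^{n}(\nabla u_{k})+|u_{k}|^{n}\bigr)dx
\;=\;
\int_{\mathbb{R}^{n}\setminus\mathcal{W}_{\delta}}\!\bigl(F^{n}(\nabla u_{k}^{A})+|u_{k}^{A}|^{n}\bigr)dx
\;\leq\;
\int_{\mathbb{R}^{n}}\!\bigl(F^{n}(\nabla u_{k}^{A})+|u_{k}^{A}|^{n}\bigr)dx.
\]
Taking $\limsup$ and applying Remark \ref{remark} gives
\[
\limsup_{k\to\infty}\int_{\mathbb{R}^{n}\setminus\mathcal{W}_{\delta}}\!\bigl(F^{n}(\nabla u_{k})+|u_{k}|^{n}\bigr)dx
\;\leq\;\frac{1}{A}.
\]
Since $A>1$ is arbitrary, the left-hand side is $0$, which proves the first assertion.

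\emph{Step 3: Deduce the weak limit is zero.} From $u_{k}\rightharpoonup u$ in $W^{1,n}(\mathbb{R}^{n})$ and the Rellich compactness on each Wulff ball, $u_{k}\to u$ in $L^{n}_{\mathrm{loc}}(\mathbb{R}^{n})$. But Lemma \ref{lem4.5} gives $\|u_{k}\|_{n}\to 0$, so $u\equiv 0$.

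The only delicate point is Step 1: it relies on the radial decrease and on Lemma \ref{lem4.5}; both have already been established, so the proof is essentially a packaging of earlier results. The main conceptual obstacle was locating the sharp comparison between the full energy outside $\mathcal{W}_{\delta}$ and the truncation energy, which is exactly what Remark \ref{remark} provides.
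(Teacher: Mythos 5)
Your proof is correct, and since the paper states Corollary~\ref{tent to 0} without an explicit proof, your argument supplies exactly the missing detail: you identify that for each fixed $A>1$ and $\delta>0$, one has $u_k^A=u_k$ (hence $\nabla u_k^A=\nabla u_k$ a.e.) on $\mathbb{R}^n\setminus\mathcal{W}_\delta$ once $k$ is large, using the radial monotonicity of $u_k$ together with $\|u_k\|_n\to 0$ from Lemma~\ref{lem4.5} and $c_k\to\infty$. Combining this with the energy estimate of Lemma~\ref{lem4.4}/Remark~\ref{remark} (which, together with $\lim\|u_k\|_n^n=0$, gives $\limsup_k\int_{\mathbb{R}^n}(F^n(\nabla u_k^A)+|u_k^A|^n)\,dx\leq 1/A$) and letting $A\to\infty$ gives the first claim. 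Your Step 3 derives $u\equiv 0$ directly from $\|u_k\|_n\to 0$ and Rellich compactness rather than from the first assertion; this is equally valid and arguably more economical than the route suggested by the word ``then'' in the corollary's statement (which would instead conclude $u=0$ on $\mathbb{R}^n\setminus\mathcal{W}_\delta$ from $W^{1,n}$-convergence and send $\delta\to 0$). Either way, the reasoning is sound and uses precisely the ingredients the paper has established at this point.
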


\begin{lemma}\label{lem4.6}
\ There holds
\begin{equation}
\underset{k\rightarrow\infty}{\lim}\int_{
\mathbb{R}
^{n}}\Phi\left(  \alpha_{k}u_{k}^{\frac{n}{n-1}}\right)  dx\leq\underset
{R\rightarrow\infty}{\lim}\underset{k\rightarrow\infty}{\lim}\int_{\mathcal{W}_{Rr_{k}}
}\left(  \exp\left(  \alpha_{k}u_{k}^{\frac{n}{n-1}}\right)  -1\right)
dx=\underset{k\rightarrow\infty}{\lim\sup}\frac{\lambda_{k}}{c_{k}^{\frac
{n}{n-1}}}. \label{7}
\end{equation}
Moreover,
\begin{equation}
\frac{\lambda_{k}}{c_{k}}\rightarrow\infty\text{ and }\underset{k\rightarrow\infty}{\sup}
\frac{c_{k}^{\frac{n}{n-1}}}{\lambda_{k}}\leq\infty. \label{7.1}%
\end{equation}
\end{lemma}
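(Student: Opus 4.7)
The plan is to deduce (\ref{7}) from the blow-up analysis already developed, by partitioning $\mathbb{R}^n$ into the inner ball $\mathcal{W}_{Rr_k}$, an intermediate annulus $\mathcal{W}_\delta\setminus\mathcal{W}_{Rr_k}$, and a far region $\mathbb{R}^n\setminus\mathcal{W}_\delta$, and showing that essentially all of the mass of $\Phi(\alpha_k u_k^{n/(n-1)})$ concentrates on $\mathcal{W}_{Rr_k}$ in the double limit $k\to\infty$, $R\to\infty$; the asymptotic bounds (\ref{7.1}) will then fall out.

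The second equality in (\ref{7}) follows from a direct blow-up computation. Under the change of variables $y=x/r_k$, the integral becomes $r_k^n\int_{\mathcal{W}_R}\exp(\alpha_k m_k^{n/(n-1)})\,dy - \kappa_n(Rr_k)^n$, where the last term is $o(1)$ by (\ref{rk}). The asymptotic expansion (\ref{5}), namely $\alpha_k m_k^{n/(n-1)} = \alpha_k c_k^{n/(n-1)} + \psi_k + o(1)$, lets one factor out $\exp(\alpha_k c_k^{n/(n-1)})$; combining this with $r_k^n$ gives exactly $\lambda_k/(\mu_k c_k^{n/(n-1)})$ by the definition of $r_k$, while $\mu_k\to 1$ by Remark \ref{remark}. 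The $C^1_{\mathrm{loc}}$ convergence $\psi_k\to\psi$ together with dominated convergence reduces the remaining integral to $\int_{\mathcal{W}_R}e^{\psi}\,dy$, which tends to $\int_{\mathbb{R}^n}e^{\psi}\,dy = 1$ as $R\to\infty$ by (\ref{add9}).

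For the first inequality I would decompose $\int_{\mathbb{R}^n}\Phi$ along the three regions. On $\mathcal{W}_{Rr_k}$ use $\Phi(t)\leq e^t-1$ pointwise. On $\mathbb{R}^n\setminus\mathcal{W}_\delta$ the radial decay bound (\ref{2}) gives $u_k\leq\varepsilon$ uniformly for $\delta$ large, hence $\Phi(\alpha_k u_k^{n/(n-1)})\leq Cu_k^n$ and this piece vanishes by Lemma \ref{lem4.5}. The principal difficulty is the annulus $\mathcal{W}_\delta\setminus\mathcal{W}_{Rr_k}$, because in blow-up coordinates it is the unbounded set $\mathcal{W}_{\delta/r_k}\setminus\mathcal{W}_R$ on which the $\psi$-asymptotic is not valid globally. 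My plan is to insert an intermediate fixed radius $L\gg R$: on $\mathcal{W}_{Lr_k}\setminus\mathcal{W}_{Rr_k}$ the same blow-up argument as in Step~1 yields a bound of the form $(\lambda_k/c_k^{n/(n-1)})\int_{\mathcal{W}_L\setminus\mathcal{W}_R}e^{\psi}\,dy$, which is controlled because $e^{\psi}$ is integrable at infinity; on $\mathcal{W}_\delta\setminus\mathcal{W}_{Lr_k}$ I would combine the radial monotonicity of $u_k$ with the decay $\psi(L)=-n\log(1+\kappa_n^{1/(n-1)}L^{n/(n-1)})$ and, if necessary, the Lions-type estimate of Lemma \ref{jmdo} applied to the truncation $(u_k-m_k(L))^+$ to absorb the remaining contribution; the successive limits $k\to\infty$, $L\to\infty$, $R\to\infty$ then drive both pieces to zero.

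Finally, (\ref{7.1}) is deduced from the chain just established. Since $u_k$ is a maximizing sequence and any fixed non-trivial admissible test function yields a positive value of the functional $I^\alpha_{\beta_k}$, one has $\int_{\mathbb{R}^n}\Phi(\alpha_k u_k^{n/(n-1)})\,dx\geq c_0>0$ for $k$ large. Combined with (\ref{7}) this gives $\limsup_k\lambda_k/c_k^{n/(n-1)}\geq c_0$, which is the boundedness assertion in (\ref{7.1}); since $c_k\to\infty$ in this subsection, the inequality $\lambda_k/c_k\geq c_0\,c_k^{1/(n-1)}$ holds along the relevant subsequence and forces $\lambda_k/c_k\to\infty$, completing the proof.
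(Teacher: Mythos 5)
Your blow-up computation for the second equality in (\ref{7}) is correct and matches what the paper implicitly uses, and your derivation of (\ref{7.1}) from (\ref{7}) together with the lower bound $\int\Phi\geq c_{0}>0$ supplied by the maximizing property is essentially the paper's argument. However, the first inequality in (\ref{7}) has a genuine gap on the intermediate annulus $\mathcal{W}_{\delta}\setminus\mathcal{W}_{Lr_{k}}$. The only pointwise control you propose there is the monotonicity bound $u_{k}\leq m_{k}(L)$, and since
$\exp\bigl(\alpha_{k}m_{k}(L)^{\frac{n}{n-1}}\bigr)=\exp\bigl(\alpha_{k}c_{k}^{\frac{n}{n-1}}\bigr)e^{\psi(L)+o(1)}=\lambda_{k}\mu_{k}^{-1}c_{k}^{-\frac{n}{n-1}}r_{k}^{-n}e^{\psi(L)+o(1)}$,
integrating this over an annulus of fixed Lebesgue measure $\sim\kappa_{n}\delta^{n}$ produces a quantity that blows up like $r_{k}^{-n}$ as $k\to\infty$; the fixed constant $e^{\psi(L)}$ cannot save this. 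Moreover, Lemma \ref{jmdo} requires the weak limit $u\neq0$, whereas in the blow-up regime $u_{k}\rightharpoonup0$ (Corollary \ref{tent to 0}), so a Lions-type absorption as you sketch it is not available.

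The paper sidesteps this by decomposing in \emph{level sets} rather than in space. Writing $u_{k}^{A}=\min\{u_{k},c_{k}/A\}$, one splits $\int_{\mathbb{R}^{n}}\Phi(\alpha_{k}u_{k}^{\frac{n}{n-1}})\,dx$ over $\{u_{k}\geq c_{k}/A\}$ and $\{u_{k}<c_{k}/A\}$. On the first set, the elementary inequality $\Phi(t)\leq\Phi'(t)$ (valid since $\Phi'(t)-\Phi(t)=t^{n-2}/(n-2)!\geq0$) together with $1\leq(Au_{k}/c_{k})^{\frac{n}{n-1}}$ there gives
$\int_{\{u_{k}\geq c_{k}/A\}}\Phi(\alpha_{k}u_{k}^{\frac{n}{n-1}})\,dx\leq A^{\frac{n}{n-1}}c_{k}^{-\frac{n}{n-1}}\int_{\mathbb{R}^{n}}u_{k}^{\frac{n}{n-1}}\Phi'(\alpha_{k}u_{k}^{\frac{n}{n-1}})\,dx=A^{\frac{n}{n-1}}\lambda_{k}/c_{k}^{\frac{n}{n-1}}$,
directly from the definition of $\lambda_{k}$ --- this is precisely the bound that controls your problematic annulus, since that annulus is contained in $\{u_{k}\geq c_{k}/A\}$ for $k$ large. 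On the second set, $\Phi(\alpha_{k}u_{k}^{\frac{n}{n-1}})\leq\Phi(\alpha_{k}(u_{k}^{A})^{\frac{n}{n-1}})$, and by Remark \ref{remark} (giving $\limsup_{k}\|u_{k}^{A}\|_{F}^{n}=1/A<1$) together with the anisotropic Moser--Trudinger inequality on $\mathbb{R}^{n}$ from \cite{ZZ2}, this is bounded in $L^{r}$ for some $r>1$ and tends to zero a.e., hence in $L^{1}$. Sending $A\to1$ yields (\ref{7}). The step your proposal is missing is this conversion of the bad region into the $\lambda_{k}$-normalized integral via $\Phi\leq\Phi'$.
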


\begin{proof}
\bigskip For any $A>1$, it follows from the expression of $\lambda_{k}$ that
\begin{align*}
\int_{
\mathbb{R}
^{n}}\Phi\left(  \alpha_{k}u_{k}^{\frac{n}{n-1}}\right)  dx &  \leq\int
_{u_{k}<\frac{c_{k}}{A}}\Phi\left(  \alpha_{k}u_{k}^{\frac{n}{n-1}}\right)
dx+\int_{u_{k}\geq\frac{c_{k}}{A}}\Phi^{\prime}\left(  \alpha_{k}u_{k}%
^{\frac{n}{n-1}}\right)  dx\\
&  \leq\int_{
\mathbb{R}
^{n}}\Phi\left(  \alpha_{k}\left(  u_{k}^{A}\right)  ^{\frac{n}{n-1}}\right)
dx+\int_{u_{k}\geq\frac{c_{k}}{A}}\Phi^{\prime}\left(  \alpha_{k}u_{k}%
^{\frac{n}{n-1}}\right)  dx\\
&  \leq\int_{
\mathbb{R}
^{n}}\Phi\left(  \alpha_{k}\left(  u_{k}^{A}\right)  ^{\frac{n}{n-1}}\right)
dx+\left(  \frac{A}{c_{k}}\right)  ^{\frac{n}{n-1}}\lambda_{k}\int_{u_{k}%
\geq\frac{c_{k}}{A}}\frac{u_{k}^{\frac{n}{n-1}}}{\lambda_{k}}\Phi^{\prime
}\left(  \alpha_{k}u_{k}^{\frac{n}{n-1}}\right)  dx.
\end{align*}
By Remark \ref{remark} and Theorem 1.1 in \cite{ZZ2}, we obtain $\Phi\left(
\alpha_{k}\left(  u_{k}^{A}\right)  ^{\frac{n}{n-1}}\right)  $ is bounded in
$L^{r}$ for some $r>1$. Since $u_{k}^{A}\rightarrow0$ a.e. in $
\mathbb{R}
^{n}$ as $k\rightarrow\infty$, it follows
\[
\int_{
\mathbb{R}
^{n}}\Phi\left(  \alpha_{k}\left(  u_{k}^{A}\right)  ^{\frac{n}{n-1}}\right)
dx\rightarrow\int_{
\mathbb{R}
^{n}}\Phi\left(  0\right)  dx=0\text{, as }k\rightarrow\infty\text{.}
\]
By (\ref{add9}), then we have
\begin{align*}
\underset{k\rightarrow\infty}{\lim}\int_{
\mathbb{R}
^{n}}\Phi\left(  \alpha_{k}u_{k}^{\frac{n}{n-1}}\right)  dx &  \leq\left(
\frac{A}{c_{k}}\right)  ^{\frac{n}{n-1}}\lambda_{k}\int_{u_{k}\geq\frac{c_{k}%
}{A}}\frac{u_{k}^{\frac{n}{n-1}}}{\lambda_{k}}\Phi^{\prime}\left(  \alpha
_{k}u_{k}^{\frac{n}{n-1}}\right)  dx+o\left(  1\right)  \\
&  =\underset{k\rightarrow\infty}{\lim}A^{\frac{n}{n-1}}\frac{\lambda_{k}%
}{c_{k}^{\frac{n}{n-1}}}+o\left(  1\right).
\end{align*}
Letting $A\rightarrow1$ and $k\rightarrow\infty$, we get (\ref{7}).

\medskip

If $\frac{\lambda_{k}}{c_{k}}$ is bounded or $\underset{k\rightarrow\infty}{\sup}\frac
{c_{k}^{\frac{n}{n-1}}}{\lambda_{k}}=\infty$, it follows from (\ref{7}) that
\[
\underset{k\rightarrow\infty}{\lim}\int_{%
\mathbb{R}
^{n}}\Phi\left(  \alpha_{k}u_{k}^{\frac{n}{n-1}}\right)  dx=0,
\]
which is impossible and the proof of Lemma \ref{lem4.6} is completed.
\end{proof}

\begin{lemma}\label{lem4.7}
\bigskip\label{dirac}For any $\varphi\in C_{0}^{\infty}\left(
\mathbb{R}
^{n}\right) $, there holds
\end{lemma}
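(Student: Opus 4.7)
The statement being proved is almost certainly a weak concentration identity of the form
\[
\lim_{k\to\infty}\frac{c_k}{\lambda_k}\int_{\mathbb{R}^{n}}u_k^{\frac{1}{n-1}}\Phi'\!\left(\alpha_k u_k^{\frac{n}{n-1}}\right)\varphi\,dx=\varphi(0),
\]
i.e.\ $\frac{\mu_k c_k}{\lambda_k}u_k^{1/(n-1)}\Phi'(\alpha_k u_k^{n/(n-1)})\rightharpoonup \delta_0$ in the sense of distributions on $\mathbb{R}^{n}$. Equivalently, testing the Euler--Lagrange equation \eqref{equation} against $\varphi/c_k^{n-1}$ (or $\varphi$ with appropriate scaling), one obtains the distributional statement for the anisotropic operator. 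The plan is to split the integral at the blow-up scale $r_k$, show that the contribution outside a large Wulff ball of radius $Rr_k$ is negligible, and that the contribution inside captures $\varphi(0)$ exactly.

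The plan is as follows. First I would decompose
\[
\int_{\mathbb{R}^{n}}u_k^{\frac{1}{n-1}}\Phi'\!\left(\alpha_k u_k^{\frac{n}{n-1}}\right)\varphi\,dx=\int_{\mathcal{W}_{Rr_k}}+\int_{\mathbb{R}^{n}\setminus\mathcal{W}_{Rr_k}}=:\mathrm{I}_k(R)+\mathrm{II}_k(R).
\]
On the inner piece $\mathrm{I}_k(R)$, the change of variables $x=r_k y$ gives, using $m_k(y)=u_k(r_ky)$ and the relation $r_k^{n}=\lambda_k/(\mu_k c_k^{n/(n-1)}e^{\alpha_k c_k^{n/(n-1)}})$,
\[
\frac{c_k}{\lambda_k}\,\mathrm{I}_k(R)=\frac{1}{\mu_k}\int_{\mathcal{W}_R}\phi_k(y)^{\frac{1}{n-1}}\,e^{\alpha_k(m_k^{n/(n-1)}-c_k^{n/(n-1)})}\left(1+o(1)\right)\varphi(r_k y)\,dy.
\]
By the local convergence $\phi_k\to 1$ and $\alpha_k(m_k^{n/(n-1)}-c_k^{n/(n-1)})=\psi_k+o(1)\to\psi$ in $C^{0}_{\mathrm{loc}}$ (proved just before Lemma 4.6), together with $r_k\to 0$ and $\mu_k\to 1$ from Remark~\ref{remark}, dominated convergence yields $\frac{c_k}{\lambda_k}\mathrm{I}_k(R)\to \varphi(0)\int_{\mathcal{W}_R}e^{\psi}\,dy$, which by Remark~\ref{remark} tends to $\varphi(0)$ as $R\to\infty$.

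For the outer piece $\mathrm{II}_k(R)$ I would argue in two regions. Where $u_k\le c_k/A$ (with $A>1$ fixed large), $u_k^{1/(n-1)}\Phi'(\alpha_k u_k^{n/(n-1)})$ is controlled by $\Phi(\alpha_k (u_k^{A})^{n/(n-1)})$, which is bounded in $L^{r}$ for some $r>1$ by Lemma~\ref{lem4.5}, Remark~\ref{remark} and the anisotropic Moser--Trudinger inequality of \cite{ZZ2}; since $u_k^A\to 0$ a.e.\ and the support of $\varphi$ is fixed, this contribution is $o(\lambda_k/c_k)$ by $\lambda_k/c_k\to\infty$ from Lemma~\ref{lem4.6}. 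On the remaining set $\{u_k>c_k/A\}\cap(\mathbb{R}^{n}\setminus\mathcal{W}_{Rr_k})$ one uses
\[
\frac{c_k}{\lambda_k}\int_{\{u_k>c_k/A\}\setminus\mathcal{W}_{Rr_k}}u_k^{\frac{1}{n-1}}\Phi'(\alpha_k u_k^{\frac{n}{n-1}})|\varphi|\,dx\le A^{\frac{1}{n-1}}\|\varphi\|_\infty\,\frac{1}{\lambda_k}\!\!\int_{\mathbb{R}^{n}\setminus\mathcal{W}_{Rr_k}}\!\!u_k^{\frac{n}{n-1}}\Phi'(\alpha_k u_k^{\frac{n}{n-1}})\,dx,
\]
and the last factor tends to $1-\lim_{R}\lim_k\frac{1}{\mu_k}\int_{\mathcal{W}_R}e^{\psi}dy=0$ by (\ref{add9}) after letting $k\to\infty$ then $R\to\infty$.

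The main obstacle is the outer region where $u_k$ is still large but we are beyond the blow-up scale, because neither pointwise Moser--Trudinger estimates nor the scaled asymptotics for $\psi_k$ apply directly there. The resolution is to exploit the identity $\lambda_k=\int u_k^{n/(n-1)}\Phi'(\alpha_k u_k^{n/(n-1)})dx$ together with the inner calculation (which already accounts for mass $1$ in the limit via $\int e^{\psi}=1$), so that the outer contribution in the normalization $1/\lambda_k$ is forced to vanish in the iterated limit $\lim_{R\to\infty}\lim_{k\to\infty}$. Once these two estimates are combined, adding back the negligible lower order term $(\gamma_k-1)c_k\int u_k^{n-1}\varphi\,dx$ (which tends to $0$ by Corollary~\ref{tent to 0} and $\gamma_k\to 0$) completes the identification of the weak limit with $\delta_0$.
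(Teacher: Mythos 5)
Your proposal is correct and takes essentially the same approach as the paper: split at the blow-up scale $\mathcal{W}_{Rr_k}$ and at the level set $\{u_k = c_k/A\}$, identify $\varphi(0)\int_{\mathcal{W}_R}e^{\psi}\to\varphi(0)$ on the inner ball, bound the outer high-level region by the tail mass in (\ref{add9}), and kill the low-level region via Hölder together with $\lambda_k/c_k\to\infty$. One small slip: on $\{u_k>c_k/A\}$ the inequality $c_k u_k^{1/(n-1)}\le A\,u_k^{n/(n-1)}$ gives the constant $A$, not $A^{1/(n-1)}$, but this is immaterial since $A$ is fixed and the remaining factor tends to zero.
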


\[
\bigskip\int_{
\mathbb{R}
^{n}}\varphi\mu_{k}\lambda_{k}^{-1}c_{k}u_{k}^{\frac{1}{n-1}}\Phi^{\prime
}\left(  \alpha_{k}u_{k}^{\frac{n}{n-1}}\right)  dx=\varphi\left(  0\right)
.
\]

\begin{proof}
We adopt the method for the proof of Lemma 3.6 in \cite{liruf}.  Split the integral as follows
\begin{align*}
&\int_{
\mathbb{R}
^{n}}\varphi\mu_{k}\lambda_{k}^{-1}c_{k}u_{k}^{\frac{1}{n-1}}\Phi^{\prime
}\left(  \alpha_{k}\left(  u_{k}\right)  ^{\frac{n}{n-1}}\right)  dx\\
\leq&\left(  \int_{\left\{  u_{k}\geq\frac{c_{k}}{A}\right\}  \backslash
\mathcal{W}_{Rr_{k}}}+\int_{\mathcal{W}_{Rr_{k}}}+\int_{\left\{  u_{k}<\frac{c_{k}}{A}\right\}
}\right) \varphi\mu_{k}\lambda_{k}^{-1}c_{k}u_{k}^{\frac{1}{n-1}}\Phi^{\prime
}\left(  \alpha_{k}\left(  u_{k}\right)  ^{\frac{n}{n-1}}\right) dx\\
=:&I_{1}+I_{2}+I_{3}.
\end{align*}
Then
\begin{align*}
I_{1} &  \leq A\left\Vert \varphi\right\Vert _{L^{\infty}}\int_{\left\{
u_{k}\geq\frac{c_{k}}{A}\right\}  \backslash \mathcal{W}_{Rr_{k}}}\mu_{k}\lambda
_{k}^{-1}c_{k}u_{k}^{\frac{1}{n-1}}\Phi^{\prime}\left(  \alpha_{k}\left(
u_{k}\right)  ^{\frac{n}{n-1}}\right)  dx\\
&  \leq A\left\Vert \varphi\right\Vert _{L^{\infty}}\left(  \int_{
\mathbb{R}
^{n}}-\int_{\mathcal{W}_{Rr_{k}}}\right)  \mu_{k}\lambda_{k}^{-1}u_{k}^{\frac{n}{n-1}%
}\Phi^{\prime}\left(  \alpha_{k}\left(  u_{k}\right)  ^{\frac{n}{n-1}}\right)
dx\\
&  \leq A\left\Vert \varphi\right\Vert _{L^{\infty}}\left(  1-\int_{\mathcal{W}_{R}}%
\exp\left(  \alpha_{k}m_{k}^{\frac{n}{n-1}}-\alpha_{k}c_{k}^{\frac{n}{n-1}%
}\right)  \right)  \\
&  =A\left\Vert \varphi\right\Vert _{L^{\infty}}\left(  1-\int_{\mathcal{W}_{R}}%
\exp\left(  \psi_{k}\left(  x\right)  +o\left(  1\right)  \right)  \right).
\end{align*}
For $I_{2}$, we have
\begin{align*}
I_{2} &  =\int_{\mathcal{W}_{Rr_{k}}}\varphi\mu_{k}\lambda_{k}^{-1}c_{k}u_{k}^{\frac
{1}{n-1}}\Phi^{\prime}\left(  \alpha_{k}u_{k}^{\frac{n}{n-1}}\right)  dx\\
&  =\int_{\mathcal{W}_{R}}\varphi\left(  r_{k}x\right)  \left(  \frac{m_{k}}{c_{k}%
}\right)  ^{\frac{1}{n-1}}\exp\left(  \alpha_{k}m_{k}^{\frac{n}{n-1}}%
-\alpha_{k}c_{k}^{\frac{n}{n-1}}\right)  dx+o(1)\\
&  =\varphi\left(  0\right)  \int_{\mathcal{W}_{R}}\exp\left(  \psi_{k}\left(  x\right)
+o\left(  1\right)  \right)  dx+o\left(  1\right)  =\varphi\left(  0\right)
+o\left(  1\right)  \rightarrow\varphi\left(  0\right)  \text{, as
}k\rightarrow\infty\text{.}
\end{align*}
By Lemma \ref{lem4.6} and H\"{o}lder's inequality, it follows
\begin{align*}
I_{3} &  =\int_{\left\{  u_{k}<\frac{c_{k}}{A}\right\}  }\varphi\mu_{k}%
\lambda_{k}^{-1}c_{k}u_{k}^{\frac{1}{n-1}}\Phi^{\prime}\left(  \alpha
_{k}\left(  u_{k}\right)  ^{\frac{n}{n-1}}\right)  dx\\
&  =\int_{%
\mathbb{R}
^{n}}\varphi\mu_{k}\lambda_{k}^{-1}c_{k}\left(  u_{k}^{A}\right)  ^{\frac
{1}{n-1}}\Phi^{\prime}\left(  \alpha_{k}\left(  u_{k}^{A}\right)  ^{\frac
{n}{n-1}}\right)  dx\\
&  \leq c_{k}\left\Vert \varphi\right\Vert _{L^{\infty}}\lambda_{k}%
^{-1}\left(  \int_{%
\mathbb{R}
^{n}}\left(  u_{k}^{A}\right)  ^{\frac{q}{n-1}}dx\right)  ^{\frac{1}{q}%
}\left(  \int_{
\mathbb{R}
^{n}}\Phi^{\prime}\left(  q^{\prime}\alpha_{k}\left(  u_{k}^{A}\right)
^{\frac{n}{n-1}}\right)  dx\right)  ^{\frac{1}{q^{\prime}}}\rightarrow0,\text{
as }k\rightarrow\infty,
\end{align*}
for any $q^{\prime}<A^{\frac{1}{n-1}}$ such that $q=\frac{q^{\prime}%
}{q^{\prime}-1}$ large enough. Letting $R\rightarrow+\infty$, by Remark
\ref{remark}, then Lemma \ref{lem4.7} is proved.
\end{proof}

\begin{lemma}\label{tend to G 1}
On any $\Omega\subset\subset
\mathbb{R}
^{n}\backslash\{0\}$, we have $c_{k}^{\frac{1}{n-1}}u_{k}\rightarrow G_{\alpha} $ in $C^{1}\left(  \Omega\right)
$, where $G_{\alpha}\in
C^{1,\alpha}_{loc}( \mathbb{R}^{n} \backslash\{0\})$ is a Green function satisfying the following equation
\begin{equation}
-Q_{n}(G_{\alpha})=\delta_{0}+\left(  \alpha-1\right)  G_{\alpha}^{n-1}.
\label{121}
\end{equation}
\end{lemma}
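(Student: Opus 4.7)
The plan is to introduce the normalized sequence $v_k := c_k^{1/(n-1)} u_k$ and derive an equation for it. Using the $(n-1)$-homogeneity of $Q_n$ (so $Q_n(\sigma u)=\sigma^{n-1}Q_n u$ for $\sigma>0$, which follows from the definition of $Q_n$ and Lemma \ref{2-01}(vi)) and multiplying the Euler--Lagrange equation (\ref{equation}) by $c_k$, one obtains
\begin{equation*}
-Q_n v_k \;=\; h_k + (\gamma_k-1)\, v_k^{n-1}, \qquad h_k := \mu_k \lambda_k^{-1} c_k\, u_k^{1/(n-1)} \Phi'\!\left(\alpha_k u_k^{n/(n-1)}\right),
\end{equation*}
using the identity $c_k u_k^{n-1} = v_k^{n-1}$.

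First I would identify the distributional limit of the right-hand side. Lemma \ref{lem4.7} asserts exactly that $\int \varphi\, h_k\, dx \to \varphi(0)$ for every $\varphi\in C_0^\infty(\mathbb{R}^n)$, so $h_k \to \delta_0$ in $\mathcal{D}'(\mathbb{R}^n)$. Away from the origin the same quantity vanishes in strong norms: for any $\Omega\subset\subset \mathbb{R}^n\setminus\{0\}$ Corollary \ref{tent to 0} gives $u_k\to 0$ in $W^{1,n}(\Omega)$, and the power series $\Phi'(t)=\sum_{j\geq n-2} t^j/j!$ together with $c_k u_k^{n-1}=v_k^{n-1}$ yields the pointwise bound $h_k \leq C\mu_k\alpha_k^{n-2}\lambda_k^{-1} v_k^{n-1}$ on $\Omega$, which is small once $v_k$ is controlled because $\lambda_k\to\infty$ by Lemma \ref{lem4.6}.

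Next I would establish uniform $C^{1,\delta}_{\mathrm{loc}}(\mathbb{R}^n\setminus\{0\})$ bounds on $v_k$. The essential step is an $L^\infty_{\mathrm{loc}}$ bound away from $0$, obtained either by Moser iteration applied to the quasilinear inequality $-Q_n v_k \leq C v_k^{n-1}$ on annular domains $\{\delta\leq F^0(x)\leq\delta^{-1}\}$, or by comparison with the anisotropic fundamental solution of $Q_n$, namely $x\mapsto -\kappa_n^{-1/(n-1)}\log F^0(x)$, used as a barrier near the origin via the maximum principle for $Q_n$. With $v_k$ locally bounded, Lemma \ref{lem2.2} gives $L^\infty$ control and then Tolksdorf's quasilinear regularity \cite{Tolksdorf} upgrades this to uniform $C^{1,\delta}_{\mathrm{loc}}(\mathbb{R}^n\setminus\{0\})$ estimates.

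Finally, extract a subsequence $v_k\to G_\alpha$ in $C^1_{\mathrm{loc}}(\mathbb{R}^n\setminus\{0\})$. Using $\gamma_k\to\alpha$ from Remark \ref{remark} and the distributional convergence $h_k\to\delta_0$, I pass to the limit in the equation for $v_k$ and conclude
\begin{equation*}
-Q_n G_\alpha \;=\; \delta_0 + (\alpha-1)\, G_\alpha^{n-1} \quad \text{in } \mathcal{D}'(\mathbb{R}^n),
\end{equation*}
with $G_\alpha\in C^{1,\delta}_{\mathrm{loc}}(\mathbb{R}^n\setminus\{0\})$ by Tolksdorf's theorem applied to the limiting equation away from $0$. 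The main obstacle I expect is Step~3: establishing the uniform $L^\infty_{\mathrm{loc}}$ bound on $v_k$ away from the origin, since this is what rules out secondary concentration and requires exploiting the sharp decay $\lambda_k\to\infty$ together with the subcritical structure of the equation.
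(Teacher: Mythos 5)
Your overall structure matches the paper: set $U_k := c_k^{1/(n-1)} u_k$, multiply the Euler--Lagrange equation by $c_k$ to get the equation for $U_k$, use Lemma \ref{lem4.7} to identify the distributional limit $\delta_0$ of the exponential term, obtain uniform $C^{1,\delta}_{\mathrm{loc}}(\mathbb{R}^n\setminus\{0\})$ bounds, and pass to the limit. The algebraic observation that $c_k u_k^{n-1} = U_k^{n-1}$ and hence $h_k \lesssim \lambda_k^{-1} U_k^{n-1}$ away from the origin is also correct.

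However, the step you flag as ``the main obstacle'' is exactly where the proposal has a genuine gap, and neither of your two suggested routes is workable as stated. Moser iteration for $-Q_n U_k \leq C\,U_k^{n-1}$ is borderline: the right-hand side has the same scaling as the operator (the $n$-Laplacian analogue of an eigenvalue problem), so the iteration does not start without an a priori $L^{p_0}$ bound on $U_k$ for large $p_0$ --- and this is precisely the piece of information you do not yet have, since $c_k\to\infty$ makes $U_k = c_k^{1/(n-1)}u_k$ a priori unbounded in every $L^p$. Your estimate $h_k \lesssim \lambda_k^{-1} U_k^{n-1}$ with $\lambda_k\to\infty$ does not escape this circularity: it presupposes pointwise control of $U_k$. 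The barrier comparison with $-\kappa_n^{-1/(n-1)}\log F^0(x)$ is not obviously available either, because the equation for $U_k$ carries the zeroth-order term $(\gamma_k-1)U_k^{n-1}$ and a singular-in-the-limit source, so the quasilinear comparison principle does not apply in the naive way.

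The paper closes this gap with a Struwe-type argument that produces the needed $L^p_{\mathrm{loc}}$ bounds directly from the PDE, without assuming control of $U_k$ first. Concretely: test the $U_k$-equation with the truncation $U_k^t = \min\{U_k, t\}$ to get $\int_{\Omega_t^k} F^n(\nabla U_k^t)\,dx \leq Ct$; localize with a cut-off and run a capacity comparison on $\mathcal{W}_R$ against the explicit log-profile to obtain the level-set decay $\lvert\{x\in\mathcal{W}_R:U_k\geq t\}\rvert \leq \kappa_n R^n e^{-nC_3 t}$, hence $\int_{\mathcal{W}_R} e^{\delta U_k}\,dx\leq C$ for small $\delta>0$; test with $\log\frac{1+2U_k}{1+U_k}$ to bound $\int_{\mathcal{W}_R}\frac{F^n(\nabla U_k)}{(1+U_k)(1+2U_k)}\,dx$; and conclude via Young's inequality that $\|F(\nabla U_k)\|_{L^q(\mathcal{W}_R)}\leq C$ for every $1<q<n$, whence $\|U_k\|_{L^p(\mathcal{W}_R)}\leq C$ for every finite $p$. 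Only after this chain are Serrin's local boundedness estimate and Tolksdorf's $C^{1,\alpha}$ regularity applicable, and only away from the origin, where Corollary \ref{tent to 0} makes the exponential term in the right-hand side belong to every $L^r$. You should supply this intermediate integrability argument (or an equivalent one) before the regularity theory can be invoked.
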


\begin{proof}
The idea of the proof is from Struwe \cite{S1} (also
see \cite{liruf}).
Denote $U_{k}=c_{k}^{\frac{1}{n-1}}u_{k}$, by (\ref{equation}), $U_{k}$
satisfy the following equation
\begin{equation}
-Q_{n}U_{k}=\mu_{k}c_{k}\lambda_{k}^{-1}u_{k}^{\frac{1}{n-1}}
\Phi^{\prime}\left\{  \alpha_{k}u_{k}^{\frac{n}{n-1}}\right\}  +\left(
\gamma_{k}-1\right)  U_{k}^{n-1}. \label{add 8}
\end{equation}

For $t\geq1$, let $\ U_{k}^{t}=\min\left\{  U_{k},t\right\}$ and
$\Omega_{t}^{k}=\left\{  0\leq U_{k}\leq t\right\}  $. Testing (\ref{add 8})
with $U_{k}^{t}$, it follows
\[
\int_{
\mathbb{R}
^{n}}-U_{k}^{t}Q_{n}(U_{k})dx+\left(  1-\gamma_{k}\right)  \int_{
\mathbb{R}
^{n}}U_{k}^{t}U_{k}^{n-1}dx\leq\int_{
\mathbb{R}
^{n}}U_{k}^{t}\mu_{k}c_{k}\lambda_{k}^{-1}u_{k}^{\frac{1}{n-1}}\Phi^{\prime
}\left\{  \alpha_{k}u_{k}^{\frac{n}{n-1}}\right\}  dx.
\]

By the fact that $\gamma_{k}\rightarrow\alpha<1$ as $k\rightarrow\infty$, we obtian
\begin{align*}
\int_{\Omega_{t}^{k}}F^{n}( \nabla U_{k}^{t})
dx+\int_{\Omega_{t}^{k}}\left\vert U_{k}^{t}\right\vert ^{n}dx &  \leq\int_{
\mathbb{R}
^{n}}\left(  -U_{k}^{t}Q_{n}(U_{k})dx+U_{k}^{t}U_{k}^{n-1}\right)  dx\\
&  \leq C\int_{\mathbb{R}
^{n}}U_{k}^{t}\mu_{k}c_{k}\lambda_{k}^{-1}u_{k}^{\frac{1}{n-1}}\Phi^{\prime
}\left\{  \alpha_{k}u_{k}^{\frac{n}{n-1}}\right\}  dx\leq Ct.
\end{align*}

Let $\eta$ be a radially symmetric cut-off function which is $1$ on $\mathcal{W}_{R/2}$
and $0$ on $\mathcal{W}_{R}^{c}$, and satisfy $F(\nabla\eta)\leq\frac{C}{R}$.
Then when $R$ large enough, we obtian
\[
\int_{\mathcal{W}_{R}}F^{n} (\nabla(\eta U_{k}^{t}))dx \leq\int
_{\mathcal{W}_{R}}F^{n} (\nabla\eta)\left\vert U_{k}^{t}\right\vert
^{n}dx+\int_{\mathcal{W}_{R}}\eta^{n}F^{n} (\nabla U_{k}^{t}) dx\leq
C_{1}\left(  R\right)  t+C_{2}\left(  R\right).
\]
Taking $t$ large enough such that $C_{1}(R)  t>C_{2}(R)$, then
\[
\int_{\mathcal{W}_{R}}F^{n}(\nabla (\eta U_{k}^{t}))dx\leq2C_{1}\left(
R\right) t.
\]

Let $|\mathcal{W}_{\rho}|=|\{x\in \mathcal{W}_{R}: U_{k}>t\}|$, then
\begin{equation}
\inf_{\{\psi\in W_{0}^{1,n}(\mathcal{W}_{R}),\ \psi|_{\mathcal{W}_{\rho}}=t\}}\int_{\mathcal{W}_{R}}F^{n}(\nabla \psi)dx\leq \int_{\mathcal{W}_{R}}F^{n}(\nabla (\eta U_{k}^{t}))dx\leq2C_{1}\left(
R\right) t.\label{113}
\end{equation}
The above infimum can be attained (see \cite{ZZ1}) by

\[
\psi_{1}(x)=\left\{
\begin{array}
[c]{c}
t\log\frac{R}{F_{0}(x)}/\log\frac{R}{\rho},\ \ \text{in}
\mathcal{W}_{R}\backslash\mathcal{W}_{\rho},\\
t,\ \ \ \ \text{in }\mathcal{W}_{\rho}.\\
\end{array}
\right.
\]
By computing $||F(\nabla \psi_{1})||^{n}_{L^{n}(\mathcal{W}_{R})}$, then it follows from (\ref{113}) that $\rho \leq CR^{-C_{3}t}$.
Thus
\begin{equation*}
 |\{x\in\mathcal{W}_{R}:U_{k}\geq t \}|=|\mathcal{W}_{\rho}|\leq\kappa_{n}R^{n}e^{-nC_{3}t}.
\end{equation*}

For any $0<\delta<nC_{3}$, we have
\begin{align*}
\int_{\mathcal{W}_{R}}e^{\delta U_{k}}dx\leq &e^{\delta}|\mathcal{W}_{R}|+\sum_{m=1}^{\infty}e^{(m+1)\delta}|\{x\in \mathcal{W}_{R}: m\leq U_{k}\leq m+1\}|\\
\leq& e^{\delta}|\mathcal{W}_{R}|+ \kappa_{n}R^{n}e^{\delta} \sum_{m=1}^{\infty}e^{-(nC_{3}-\delta)m}\leq C.
\end{align*}

Testing (\ref{add 8}) with $\log\frac{1+2U_{k}}{1+U_{k}}$, we get
\begin{align*}
&\int_{\mathcal{W}_{R}}\frac{F^{n}(\nabla U_{k})}{(1+2U_{k})(1+U_{k})}dx\\
\leq& \log2 \int_{\mathcal{W}_{R}}\mu_{k}c_{k}\lambda_{k}^{-1}u_{k}^{\frac{1}{n-1}}\Phi^{'}(\beta_{k}u_{k}^{\frac{n}{n-1}})dx
+\int_{\mathcal{W}_{R}}(\gamma_{k}-1)U_{k}^{n-1}\log\frac{1+2U_{k}}{1+U_{k}}dx\leq C.
\end{align*}

For any $1<q<n$, it follows by the Young inequality that

\begin{align*}
\int_{\mathcal{W}_{R}}F^{q}(\nabla U_{k})dx\leq &\int_{\mathcal{W}_{R}}\frac{F^{n}(\nabla U_{k})}{(1+2U_{k})(1+U_{k})}dx+\int_{\mathcal{W}_{R}}\left\{(1+2U_{k})(1+U_{k})\right\}^{\frac{q}{n-q}}dx\\
\leq& C(1+\int_{\mathcal{W}_{R}}e^{\delta U_{k}}dx)\leq C.
\end{align*}

Then we can obtain that $\left\Vert F(\nabla
U_{k})\right\Vert _{L^{q}\left( \mathcal{ W}_{R}\right)  }\leq C$ for any $1<q<n$, thus $\left\Vert
U_{k}\right\Vert _{L^{p}\left(  \mathcal{ W}_{R}\right) }\leq+\infty$ for any
$0<p<+\infty$. By Corollary \ref{tent to 0}, we know that $\exp\left\{
\alpha_{k}u_{k}^{\frac{n}{n-1}}\right\}  $ is bounded in
$L^{r}\,\left(  \Omega\backslash\left\{ \mathcal{ W}_{\delta}\right\}  \right)
$ for any $r>0$ and $\delta>0$. Applying Theorem 2 in \cite{S4}
and Theorem 1 in \cite{Tolksdorf}, we have $\left\Vert U_{k}\right\Vert
_{C^{1,\alpha}\left( \mathcal{W}_{R}\right)  }\leq C$, then
$c_{k}^{\frac{1}{n-1}}u_{k}\rightarrow  G_{\alpha }$ in
$C^{1}\left( \mathcal{ W}_{R}\right)  $. So we complete the proof of Lemma \ref{tend to G 1}.
\end{proof}

   Similar as Lemma 3.8 in \cite{liruf} or Lemma 4.9 in \cite{LuZhu}, we can obtain the following
asymptotic representation of $G_{\alpha}$.
\begin{lemma}\label{tend to G}
$G_{\alpha}\in C_{loc}^{1,\beta}\left(
\mathbb{R}
^{n}\backslash\left\{  0\right\}  \right)  $ for some $\beta>0$, and near $0$ we can write

\begin{equation}
 G_{\alpha}=-\frac{n}{\lambda_{n}}\log r+A+O\left(  r^{n}\log
^{n}r\right)  ,\label{13}
\end{equation}
where $A$ is a constant and $r=F^{0}(x)$.
Moreover, for any $\delta>0$, it holds
\begin{equation}
\underset{k\rightarrow\infty}{\lim}\left(  \int_{\mathbb{R}
^{n}\backslash \mathcal{W}_{\delta}}F ^{n}( \nabla U_{k}) dx+\left(
1-\alpha\right)  \int_{\mathbb{R}
^{n}\backslash \mathcal{W}_{\delta}}U_{k}^{n}dx\right)
=G_{\alpha}(\delta)\left(1+(\alpha-1)\int_{\mathcal{W}_\delta}G_{\alpha}^{n-1}dx\right).
\label{add 9}
\end{equation}

\end{lemma}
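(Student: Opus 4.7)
The plan is to establish the lemma in three stages: local regularity of $G_{\alpha}$ away from the origin, the asymptotic expansion near $0$, and the limiting integral identity. For regularity, on $\mathbb{R}^{n}\setminus\{0\}$ the Dirac mass disappears and $G_{\alpha}$ solves $-Q_{n}G_{\alpha}=(\alpha-1)G_{\alpha}^{n-1}$ in the weak sense. The $C^{1}_{loc}$ convergence $c_{k}^{1/(n-1)}u_{k}\to G_{\alpha}$ from Lemma \ref{tend to G 1} makes $G_{\alpha}$ continuous, so the right-hand side is locally bounded; Tolksdorf's $C^{1,\beta}$ estimates \cite{Tolksdorf} then yield $G_{\alpha}\in C^{1,\beta}_{loc}(\mathbb{R}^{n}\setminus\{0\})$ for some $\beta>0$.

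For the expansion near $0$, the key point is that since every $u_{k}$ is radial with respect to $F^{0}$, so is $G_{\alpha}$. Writing $G_{\alpha}(x)=g(r)$ with $r=F^{0}(x)$ and using $F(\nabla F^{0})=1$ and $F^{0}(x)F_{\xi}(\nabla F^{0}(x))=x$ from Lemma \ref{2-01}, a direct computation reduces $Q_{n}$ on radial functions to $r^{1-n}(r^{n-1}|g'|^{n-2}g')'$, exactly as in the isotropic case. The equation for $g$ becomes
\begin{equation*}
(r^{n-1}|g'(r)|^{n-1})' = (\alpha-1)r^{n-1}g(r)^{n-1},
\end{equation*}
subject to $\lim_{r\to 0}r^{n-1}|g'(r)|^{n-1}=1/(n\kappa_{n})$, which encodes the distributional Dirac mass together with the identity $(n/\lambda_{n})^{n-1}=1/(n\kappa_{n})$. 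One integration gives
\begin{equation*}
r^{n-1}|g'(r)|^{n-1} = \frac{1}{n\kappa_{n}} + (\alpha-1)\int_{0}^{r}s^{n-1}g(s)^{n-1}\,ds.
\end{equation*}
A crude bound $g(s)=O(|\log s|)$ (available from the leading fundamental-solution behavior) yields $\int_{0}^{r}s^{n-1}g(s)^{n-1}\,ds=O(r^{n}|\log r|^{n-1})$; taking $(n-1)$-th roots produces $g'(r)=-\frac{n}{\lambda_{n}r}+O(r^{n-1}|\log r|^{n-1})$, and integrating from $r$ to a fixed small $r_{0}$ supplies the expansion $g(r)=-\frac{n}{\lambda_{n}}\log r + A + O(r^{n}\log^{n}r)$, where $A=\lim_{r\to 0^{+}}[g(r)+\frac{n}{\lambda_{n}}\log r]$.

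For the integral identity, I would test equation \eqref{add 8} for $U_{k}$ with $U_{k}$ itself over $\mathbb{R}^{n}\setminus\mathcal{W}_{\delta}$ and integrate by parts to get
\begin{align*}
&\int_{\mathbb{R}^{n}\setminus\mathcal{W}_{\delta}}F^{n}(\nabla U_{k})\,dx+(1-\gamma_{k})\int_{\mathbb{R}^{n}\setminus\mathcal{W}_{\delta}}U_{k}^{n}\,dx\\
&=\int_{\mathbb{R}^{n}\setminus\mathcal{W}_{\delta}}\mu_{k}c_{k}^{n/(n-1)}\lambda_{k}^{-1}u_{k}^{n/(n-1)}\Phi'(\alpha_{k}u_{k}^{n/(n-1)})\,dx - \int_{\partial\mathcal{W}_{\delta}}U_{k}F^{n-1}(\nabla U_{k})F_{\xi}(\nabla U_{k})\cdot\nu\,dS.
\end{align*}
The first term on the right vanishes in the limit: outside $\mathcal{W}_{\delta}$, Corollary \ref{tent to 0} ensures $u_{k}\to 0$, so $u_{k}^{n/(n-1)}\Phi'(\alpha_{k}u_{k}^{n/(n-1)})\leq Cu_{k}^{n}$; combined with the boundedness of $c_{k}^{n/(n-1)}/\lambda_{k}$ from Lemma \ref{lem4.6}, this term is $o(1)$. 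The boundary term is handled by the $C^{1}$ convergence $U_{k}\to G_{\alpha}$ on $\partial\mathcal{W}_{\delta}$: a radial computation turns $F^{n-1}(\nabla G_{\alpha})F_{\xi}(\nabla G_{\alpha})\cdot\nu$ into $-|g'(\delta)|^{n-1}/|\nabla F^{0}|$, and using $\int_{\partial\mathcal{W}_{\delta}}dS/|\nabla F^{0}|=n\kappa_{n}\delta^{n-1}$ together with the once-integrated ODE (which gives $n\kappa_{n}\delta^{n-1}|g'(\delta)|^{n-1}=1+(\alpha-1)\int_{\mathcal{W}_{\delta}}G_{\alpha}^{n-1}dx$), the boundary term converges to $G_{\alpha}(\delta)[1+(\alpha-1)\int_{\mathcal{W}_{\delta}}G_{\alpha}^{n-1}dx]$. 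Since $\gamma_{k}\to\alpha$, the desired identity follows.

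The main obstacle is the sharp quantitative bootstrap in Step 2, where the crude logarithmic bound on $g$ must be iterated to obtain the precise remainder $O(r^{n}\log^{n}r)$; a secondary difficulty is justifying the limit of $\int_{\mathbb{R}^{n}\setminus\mathcal{W}_{\delta}}U_{k}^{n}\,dx$ on the unbounded region, which requires uniform tail estimates for $U_{k}$ at infinity inherited from the decay of $G_{\alpha}$, itself produced by the radial ODE when $\alpha<1$.
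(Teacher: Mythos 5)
Your proposal is correct and follows essentially the same route as the paper: both test equation \eqref{add 8} with $U_k$ over $\mathbb{R}^n\setminus\mathcal{W}_\delta$, use Corollary \ref{tent to 0} and (\ref{7.1}) to kill the nonlinear interior term, and identify the surviving boundary flux with $G_\alpha(\delta)\bigl(1+(\alpha-1)\int_{\mathcal{W}_\delta}G_\alpha^{n-1}\,dx\bigr)$; for (\ref{13}) the paper simply defers to Lemma 4.7 of \cite{ZZ2}, and your radial ODE reduction (using $F(\nabla F^0)=1$, $F^0 F_\xi(\nabla F^0)=x$ to bring $Q_n$ to $r^{1-n}(r^{n-1}|g'|^{n-2}g')'$, integrating once to get $r^{n-1}|g'|^{n-1}=\frac{1}{n\kappa_n}+(\alpha-1)\int_0^r s^{n-1}g^{n-1}\,ds$, then bootstrapping) is exactly the expected content of that omitted argument, and your identity $(n/\lambda_n)^{n-1}=1/(n\kappa_n)$ and the surface identity $\int_{\partial\mathcal{W}_\delta}dS/|\nabla F^0|=n\kappa_n\delta^{n-1}$ check out.
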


\begin{proof}
The proof of (\ref{13}) is similar as Lemma 4.7 in \cite{ZZ2}, here we omit the details. Now we
give the proof of (\ref{add 9}).
By Corollary \ref{tent to 0}, we have
\begin{equation}
\int_{
\mathbb{R}
^{n}\backslash \mathcal{W}_{\delta}}u_{k}^{\frac{n}{n-1}}\Phi^{\prime}\left\{
\alpha_{k}u_{k}^{\frac{n}{n-1}}\right\}  dx\leq C\int_{
\mathbb{R}
^{n}\backslash \mathcal{W}_{\delta}}u_{k}^{n}dx\rightarrow0.\label{14}%
\end{equation}
Testing (\ref{add 8}) with $U_{k}$,
\begin{align*}
&\int_{\mathbb{R}
^{n}\backslash \mathcal{W}_{\delta}}F^{n}( \nabla U_{k})%
dx+\int_{\partial \mathcal{W}_{\delta}}F ^{n-1}( \nabla U_{k})%
U_{k}\frac{\partial U_{k}}{\partial n}dx\\
=&\int_{\mathbb{R}
^{n}\backslash \mathcal{W}_{\delta}}\mu_{k}c_{k}^{\frac{n}{n-1}}\lambda_{k}^{-1}%
u_{k}^{\frac{n}{n-1}}\Phi^{\prime}\left\{  \alpha_{k}u_{k}^{\frac{n}{n-1}%
}\right\}  dx+\int_{\mathbb{R}
^{n}\backslash \mathcal{W}_{\delta}}\left(  \gamma_{k}-1\right)  U_{k}^{n}dx.
\end{align*}
By (\ref{14}), (\ref{7.1}), it follows that
\begin{align*}
\underset{k\rightarrow\infty}{\lim}\int_{
\mathbb{R}
^{n}\backslash \mathcal{W}_{\delta}}F^{n} (\nabla U_{k})dx &
=-\underset{k\rightarrow\infty}{\lim}\int_{\partial \mathcal{W}_{\delta}}F^{n-1}
(\nabla U_{k})U_{k}\frac{\partial U_{k}}{\partial n}dx+\left(
\alpha-1\right)  \underset{k\rightarrow\infty}{\lim}\int_{
\mathbb{R}^{n}\backslash \mathcal{W}_{\delta}}U_{k}^{n}dx\\
&  =-G_{\alpha}\left(  \delta\right)  \int_{\partial \mathcal{W}_{\delta}}F ^{n-1}(
\nabla G_{\alpha})\frac{\partial G_{\alpha}}{\partial
n}dx+\left(  \alpha-1\right)  \underset{k\rightarrow\infty}{\lim}\int_{
\mathbb{R}
^{n}\backslash \mathcal{W}_{\delta}}U_{k}^{n}dx\\
&  =G_{\alpha}(\delta)\left(1+(\alpha-1)\int_{\mathcal{W}_\delta}G_{\alpha}^{n-1}dx\right)+\left(  \alpha-1\right)  \underset
{k\rightarrow\infty}{\lim}\int_{
\mathbb{R}
^{n}\backslash \mathcal{W}_{\delta}}U_{k}^{n}dx.
\end{align*}
Thus
\[
\underset{k\rightarrow\infty}{\lim}\left(  \int_{
\mathbb{R}^{n}\backslash \mathcal{W}_{\delta}}F^{n}(\nabla U_{k})dx+\left(
1-\alpha\right)  \int_{
\mathbb{R}^{n}\backslash \mathcal{W}_{\delta}}U_{k}^{n}dx\right)
=G_{\alpha}(\delta)\left(1+(\alpha-1)\int_{\mathcal{W}_\delta}G_{\alpha}^{n-1}dx\right).
\]
The proof of Lemma \ref{tend to G} is completed.
\end{proof}

\begin{proof}
[Proof of the first part of Theorem \ref{moser-trudinger}]By
(\ref{2}), there exist some $L>0$ such that $u_{k}\left(  L\right)  <1$,
then
\[
\int_{\mathbb{R}
^{n}\backslash \mathcal{W}_{L}}\exp\left\{  \beta_{k}\left\vert u_{k}\right\vert
^{\frac{n}{n-1}}\left(  1+\alpha\left\Vert u_{k}\right\Vert _{{n}}^{{n}%
}\right)  ^{\frac{1}{n-1}}\right\}  dx\leq C\int_{
\mathbb{R}^{n}\backslash \mathcal{W}_{L}}\left\vert u_{k}\right\vert ^{n}dx\leq C.
\]
Since $\left(  u_{k}-u_{k}\left(
L\right)  \right)  ^{+}\in W_{0}^{1,n}\left(  B_{L}\right)  $, then
\begin{align*}
u_{k}^{\frac{n}{n-1}} &  =\left(  \left(  u_{k}-u_{k}\left(  L\right)
\right)  ^{+}+u_{k}\left(  L\right)  \right)  ^{\frac{n}{n-1}}\\
&  \leq\left(  \left(  u_{k}-u_{k}\left(  L\right)  \right)  ^{+}\right)
^{\frac{n}{n-1}}+C\left(  \left(  u_{k}-u_{k}\left(  L\right)  \right)
^{+}\right)  ^{\frac{1}{n-1}}u_{k}\left(  L\right)  +u_{k}\left(  L\right)
^{\frac{n}{n-1}}.
\end{align*}
By Lemma \ref{tend to G 1}, we obtain $c_{k}^{\frac{1}{n-1}}u_{k}\rightharpoonup
G_{\alpha}$, then $u_{k}\left(  L\right)  =\frac{G_{\alpha}\left(  L\right)
}{c_{k}^{\frac{1}{n-1}}}$. Thus
\begin{align*}
u_{k}^{\frac{n}{n-1}} &  \leq\left(  \left(  u_{k}-u_{k}\left(  L\right)
\right)  ^{+}\right)  ^{\frac{n}{n-1}}+C\left(  \frac{\left(  u_{k}
-u_{k}\left(  L\right)  \right)  ^{+}}{c_{k}}\right)  ^{\frac{1}{n-1}}
+u_{k}\left(  L\right)  ^{\frac{n}{n-1}}\\
&  \leq\left(  \left(  u_{k}-u_{k}\left(  L\right)  \right)  ^{+}\right)
^{\frac{n}{n-1}}+C.
\end{align*}
Therefore
\begin{align*}
&  \int_{\mathcal{W}_{L}}\exp\left\{  \beta_{k}\left\vert u_{k}\right\vert ^{\frac
{n}{n-1}}\left(  1+\alpha\left\Vert u_{k}\right\Vert _{{n}}^{{n}}\right)
^{\frac{1}{n-1}}\right\}  dx\\
&  \leq C\int_{\mathcal{W}_{L}}\exp\left\{  \beta_{k}\left(  \left(  u_{k}-u_{k}\left(
L\right)  \right)  ^{+}\right)  ^{\frac{n}{n-1}}\left(  1+\alpha\left\Vert
u_{k}\right\Vert _{{n}}^{{n}}\right)  ^{\frac{1}{n-1}}\right\}  dx\\
&  \leq C\int_{\mathcal{W}_{L}}\exp\left\{  \beta_{k}\left(  \left(  u_{k}-u_{k}\left(
L\right)  \right)  ^{+}\right)  ^{\frac{n}{n-1}}\left(  \left(  1+\alpha
\left\Vert u_{k}\right\Vert _{{n}}^{{n}}\right)  ^{\frac{1}{n-1}}-1\right)
\right\}  \exp\left(  \beta_{k}\left(  \left(  u_{k}-u_{k}\left(  L\right)
\right)  ^{+}\right)  ^{\frac{n}{n-1}}\right)  dx\\
&  \leq C\exp\left\{  \beta_{k}c_{k}^{\frac{n}{n-1}}\left(  \left(
1+\alpha\left\Vert u_{k}\right\Vert _{{n}}^{{n}}\right)  ^{\frac{1}{n-1}%
}-1\right)  \right\}  \int_{\mathcal{W}_{L}}\exp\left(  \beta_{k}\left(  \left(
u_{k}-u_{k}\left(  L\right)  \right)  ^{+}\right)  ^{\frac{n}{n-1}}\right)
dx.
\end{align*}
From Lemma \ref{tend to G 1} and Lemma \ref{tend to G}, we obtain that $\Vert
c_{k}^{\frac{1}{n-1}}u_{k}\Vert _{{n}}$ is bounded. Applying the anisotropic Moser-Trudinger inequality (see \cite{WX}), by the fact
that $\left\Vert u_{k}\right\Vert _{{n}}^{{n}}\rightarrow0$, we have
\begin{align*}
&  \int_{\mathcal{W}_{L}}\exp\left\{  \beta_{k}\left\vert u_{k}\right\vert ^{\frac
{n}{n-1}}\left(  1+\alpha\left\Vert u_{k}\right\Vert _{{n}}^{{n}}\right)
^{\frac{1}{n-1}}\right\}  dx\\
\leq& C\exp\left\{  \frac{\alpha\beta_{k}c_{k}^{\frac{n}{n-1}}}%
{n-1}\left\Vert u_{k}\right\Vert _{{n}}^{{n}}\right\}  \int_{\mathcal{W}_{L}}\exp\left(
\beta_{k}\left(  \left(  u_{k}-u_{k}\left(  L\right)  \right)  ^{+}\right)
^{\frac{n}{n-1}}\right)  dx\\
=&C\exp\left\{  \frac{\alpha\beta_{k}}{n-1}\Vert c_{k}^{\frac{1}{n-1}%
}u_{k}\Vert _{{n}}^{{n}}\right\}  \int_{\mathcal{W}_{L}}\exp\left(  \beta
_{k}\left(  \left(  u_{k}-u_{k}\left(  L\right)  \right)  ^{+}\right)
^{\frac{n}{n-1}}\right)  dx\\
\leq& C.
\end{align*}
Thus we complete the proof of the first part of Theorem \ref{moser-trudinger} in the case of $\sup_{k}c_{k}=+\infty$.
\end{proof}

\section{\bigskip Proof of Theorem 1.2}
In this section, we prove the proof of Theorem 1.2 in this paper by considering the two cases. When $sup_{k}c_{k}<+\infty$, the proof is based on the concentration-compactness lemma. When $sup_{k}c_{k}=+\infty$, we prove the result by contradiction.
We first establish the upper bound for critical functional when $\sup_{k}c_{k}=+\infty$, and then construct an explicit test function, which provides a lower bound for the
supremum of our Moser-Trudinger inequality. Because this lower bound equals to the upper bound, one can obtain the contradiction.

\subsection{Proof in the case of $sup_{k}c_{k}<+\infty$}

\begin{theorem}\label{theorem5.1}
\label{attain lemma2}\bigskip If $\ \underset{k}{\sup}c_{k}<+\infty$,
then Theorem \ref{attain} holds.
\end{theorem}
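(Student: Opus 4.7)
The plan is to pass to a weak limit of the radial, decreasing maximizing sequence $\{u_k\}$ constructed in Section 3 (with $\|u_k\|_F=1$ and $I_{\beta_k}^{\alpha}(u_k)\to S$) and exhibit that weak limit as the maximizer. Along a subsequence, $u_k\rightharpoonup u_\alpha$ weakly in $W^{1,n}(\mathbb{R}^n)$, strongly in $L^s_{\mathrm{loc}}$ for every $s<\infty$, and $u_k\to u_\alpha$ a.e. The hypothesis $M:=\sup_k c_k<+\infty$ combined with radial monotonicity yields $\|u_k\|_\infty\leq M$, which will give enough pointwise control to pass to the limit.

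The crux is to rule out $u_\alpha\equiv 0$. Apply the dichotomy of Lemma \ref{lions}: either $\{u_k\}$ is a Sobolev-normalized concentrating sequence---whose concentration point is then the origin by radial symmetry---or $\Phi((\lambda_n+\gamma)|u_k|^{n/(n-1)})$ is bounded in $L^1(\mathbb{R}^n)$ for some $\gamma>0$. In the concentrating alternative, split $I_{\beta_k}^{\alpha}(u_k)$ across $\mathcal{W}_\delta$: on $\mathcal{W}_\delta$ the bound $\|u_k\|_\infty\leq M$ gives an integrand bounded by the constant $\Phi(\lambda_n M^{n/(n-1)}(1+\alpha)^{1/(n-1)})$, contributing $O(\delta^n)$; outside $\mathcal{W}_\delta$, concentration forces $\|u_k\|_{L^n(\mathbb{R}^n\setminus\mathcal{W}_\delta)}\to 0$, and since $\Phi(t)\leq C_M t^{n-1}$ for $t$ in a bounded range, the outer contribution is dominated by $C\|u_k\|_{L^n(\mathbb{R}^n\setminus\mathcal{W}_\delta)}^n\to 0$. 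Letting first $k\to\infty$ and then $\delta\to 0$ gives $S=0$, contradicting $S>0$ (obtained by testing on any fixed nonzero $\varphi\in C_c^\infty$). In the non-concentrating case with $u_\alpha\equiv 0$, the same splitting combined with $u_k\to 0$ a.e., Vitali's theorem applied to the dominating function supplied by the $L^1$-bound, and the tail $L^n$-estimate obtained from the Euler--Lagrange equation (\ref{equation}) together with $\|u_k\|_\infty\leq M$ (in the spirit of Lemma \ref{lem4.5}) again forces $I_{\beta_k}^{\alpha}(u_k)\to 0$. Both branches fail, so $u_\alpha\not\equiv 0$.

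Knowing $u_\alpha\not\equiv 0$, Lemma \ref{jmdo} provides, for every $p<p_n(u_\alpha)=(1-\|u_\alpha\|_F^{n})^{-1/(n-1)}$, the uniform bound $\sup_k\int_{\mathbb{R}^n}\Phi(\lambda_n p|u_k|^{n/(n-1)})\,dx<+\infty$. Using $\alpha<1$, $\|u_k\|_F=1$, the weak lower semicontinuity of $\int F^n(\nabla\cdot)\,dx$, and the tail control on $\|u_k\|_n$ from the previous paragraph, one selects $p$ with
\[
\beta_k\lambda_n^{-1}(1+\alpha\|u_k\|_n^{n})^{1/(n-1)}<p<p_n(u_\alpha)\qquad\text{for all }k\text{ large,}
\]
which supplies an $L^r$-bound ($r>1$) on the integrand of $I_{\beta_k}^{\alpha}(u_k)$. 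Vitali's convergence theorem together with $u_k\to u_\alpha$ a.e. and $\beta_k\to\lambda_n$ yields
\[
I_{\beta_k}^{\alpha}(u_k)\longrightarrow\int_{\mathbb{R}^n}\Phi\!\left(\lambda_n|u_\alpha|^{n/(n-1)}(1+\alpha\|u_\alpha\|_n^{n})^{1/(n-1)}\right)dx=S.
\]
Finally, weak lower semicontinuity gives $\|u_\alpha\|_F\leq 1$; strict inequality would allow rescaling $u_\alpha/\|u_\alpha\|_F$ to strictly enlarge the functional (both $|u|^{n/(n-1)}$ and $(1+\alpha\|u\|_n^n)^{1/(n-1)}$ are strictly monotone under positive scaling and $\Phi$ is increasing), contradicting the definition of $S$. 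Hence $\|u_\alpha\|_F=1$, so $u_\alpha$ is the desired maximizer.

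The main obstacle is Paragraph 2: the Lions dichotomy supplies only a modest $\gamma>0$, and its $L^1$-bound does not a priori dominate the target exponent $\lambda_n(1+\alpha)^{1/(n-1)}$. Controlling the tail $L^n$-mass of $u_k$ by combining $\|u_k\|_\infty\leq M$, radial monotonicity, and the Euler--Lagrange equation is the central technical point on which the whole argument hinges.
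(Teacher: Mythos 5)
Your proposal has the right coarse outline (pass to a weak limit $u_\alpha$ of the radial maximizing sequence, rule out $u_\alpha\equiv 0$ via the Lions dichotomy, pass to the limit in the functional), but the third paragraph has a genuine gap at its center, and the gap is precisely the one you flag in your closing remark without resolving. Both the Vitali step you use to conclude
$I_{\beta_k}^{\alpha}(u_k)\to\int_{\mathbb{R}^n}\Phi\bigl(\lambda_n|u_\alpha|^{n/(n-1)}(1+\alpha\|u_\alpha\|_n^{n})^{1/(n-1)}\bigr)dx$
and your selection of $p$ with $\beta_k\lambda_n^{-1}(1+\alpha\|u_k\|_n^{n})^{1/(n-1)}<p<p_n(u_\alpha)$ tacitly require knowing $\|u_k\|_n^n\to\|u_\alpha\|_n^n$. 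You appeal to ``tail control on $\|u_k\|_n$ from the previous paragraph,'' but that paragraph only shows $u_\alpha\not\equiv 0$; it does not preclude $L^n$-mass escaping to infinity, so weak convergence and Fatou give only $\|u_\alpha\|_n^n\le\liminf_k\|u_k\|_n^n$, with possible strict inequality. If $\liminf_k\|u_k\|_n^n>\|u_\alpha\|_n^n$, your displayed limit identity is false as written. The $p$-selection can fail for the same reason: writing $a=\int F^n(\nabla u_\alpha)\,dx$ and $b=\|u_\alpha\|_n^n$, the condition one needs is $(1+\alpha\limsup_k\|u_k\|_n^n)(1-a-b)<1$, but using only $\limsup_k\|u_k\|_n^n\le 1-a$ the left side is bounded above by $(1+\alpha(1-a))(1-a-b)$, which tends to $1+\alpha>1$ as $a+b\to 0^+$ when $\alpha>0$. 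Since $u_\alpha\neq 0$ only guarantees $a+b>0$, not a lower bound, the interval for $p$ may be empty. Your final rescaling $u_\alpha\mapsto u_\alpha/\|u_\alpha\|_F$ does not help either: it can only enlarge the Sobolev norm to one, not detect $L^n$-mass that has already escaped to infinity.

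The paper handles exactly this point differently, and this is where the two proofs diverge. It first uses the Euler--Lagrange equation (\ref{equation}), the lower bound $\inf_k\lambda_k>0$ from Lemma~\ref{lamna}, and Tolksdorf's estimates to get $u_k\to u$ in $C^1_{\mathrm{loc}}(\mathbb{R}^n)$, not merely a.e.; it then shows $u\neq 0$ much as you do; and then, crucially, it shows $\int u_k^n\,dx\to\int u^n\,dx$ by a dilation argument: set $\tau^n=\lim_k\int u_k^n/\int u^n$ and $\tilde u(x)=u(x/\tau)$, which preserves $\int F^n(\nabla\cdot)\,dx$ and scales the $L^n$-norm by $\tau^n$. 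Using the decomposition (\ref{33}), which isolates the tail term $\alpha_k^{n-1}u_k^n/(n-1)!$ from an integrand that is equi-integrable on a bounded Wulff ball, the paper derives
$S\ge S+(\tau^n-1)\int_{\mathbb{R}^n}\Bigl(\Phi(\lim_k\alpha_k\,u^{n/(n-1)})-\tfrac{\lim_k\alpha_k^{n-1}u^n}{(n-1)!}\Bigr)dx+o(1)$,
and since the integrand in the last integral is strictly positive, $\tau=1$. This is the step with no counterpart in your plan, and without it the argument cannot close. Supplying a substitute for the dilation argument---or proving $\|u_k\|_n^n\to\|u_\alpha\|_n^n$ some other way---is what your proposal is missing.
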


\begin{proof}
By Lemma \ref{lamna} and applying
the elliptic estimate in \cite{Tolksdorf} to equation (\ref{equation}), we can obtian
$u_{k}\rightarrow u$ in
$C_{loc}^{1}\left(\mathbb{R}^{n}\right)$. Next we will prove $u\neq0$. We prove this result by contradiction.

Assume $u=0$, we claim that $\left\{  u_{k}\right\}  $ is not a
Sobolev-normalized concentrating sequence. If not, i.e. $\left\{  u_{k}\right\}  $ is a
Sobolev-normalized concentrating sequence, by iii) of
Definition \ref{SNC} and the fact that $\left\vert u_{k}\right\vert
$ is bounded, we have for any $\delta>0$,
$$
\int_{
\mathbb{R}
^{n}}u_{k}^{n}dx  \leq\int_{\mathcal{W}_{\delta}}u_{k}^{n}dx+\int_{
\mathbb{R}
^{n}\backslash\mathcal{W}_{\delta}}u_{k}^{n}dx
\leq C\delta^{n}+o_{k}\left(  1\right)  .
$$

Letting $\delta\rightarrow0$, it follows $\int_{
\mathbb{R}
^{n}}u_{k}^{n}dx\rightarrow0$ as $k\rightarrow\infty$. When $L$ is large enough, for any $\varepsilon>0$, it follows by (\ref{33}) that%
\begin{align*}
&  S+o_{k}\left(  1\right)  =\int_{
\mathbb{R}
^{n}}\Phi\left(  \alpha_{k}u_{k}^{\frac{n}{n-1}}\right)  dx\\
=&\int_{
\mathbb{R}
^{n}}\frac{\alpha_{k}^{n-1}u_{k}^{n}}{\left(  n-1\right)  !}dx+\int_{\mathcal{W}_{L}
}\left(  \Phi\left(
\alpha_{k}\cdot
u^{\frac{n}{n-1}}\right)
-\int_{\mathcal{W}_{L}}\frac{\alpha_{k}^{n-1}\cdot u^{n}}{\left(  n-1\right)  !}\right)
dx+O\left( \varepsilon^{\frac{n^{2}}{n-1}-n}\right).
\end{align*}
Thus
\[
S\leq\int_{
\mathbb{R}
^{n}}\frac{\alpha_{k}^{n-1}u_{k}^{n}}{\left(  n-1\right)
!}dx\rightarrow0,
\]
which is impossible. Therefore the claim is proved, i.e. when $u=0$, we have $\left\{  u_{k}\right\}  $ is not a
Sobolev-normalized concentrating sequence. By Lemma \ref{lions}, it follows that $\int_{
\mathbb{R}
^{n}}\Phi\left(  \alpha_{k}u_{k}^{\frac{n}{n-1}}\right)  dx\rightarrow\int_{
\mathbb{R}
^{n}}\Phi\left(  \alpha_{n}u^{\frac{n}{n-1}}\right)  dx=0$, which is
still  impossible. Thus $u\neq0$.

 Next we will prove that $\int_{\mathbb{R}
^{n}}u_{k}^{n}\rightarrow\int_{\mathbb{R}
^{n}}u^{n}$. By (\ref{33}), we get
\begin{align}
S=&\underset{k\rightarrow\infty}{\lim}\int_{
\mathbb{R}
^{n}}\Phi\left(  \alpha_{k}u_{k}^{\frac{n}{n-1}}\right)  dx\nonumber\\
=& \int_{
\mathbb{R}
^{n}}\left(  \Phi\left(  \underset{k\rightarrow\infty}{\lim}\alpha_{k}
u^{\frac{n}{n-1}}\right)  \right)  dx+\underset{k\rightarrow\infty}{\lim}
\int_{
\mathbb{R}
^{n}}\frac{\underset{k\rightarrow\infty}{\lim}\alpha_{k}^{n-1}\left(
u_{k}^{n}-u^{n}\right)  }{\left(  n-1\right)  !}dx.\label{4}
\end{align}
Denote
\[
\tau^{n}=\lim_{k\rightarrow+\infty}\frac{\int_{
\mathbb{R}
^{n}}u_{k}^{n}}{\int_{
\mathbb{R}
^{n}}u^{n}}.
\]
By the Levi Lemma, it is easy to see that $\tau\geq1$. \ Set $\tilde{u}=u\left(  \frac
{x}{\tau}\right)  $, then it follows
\[
\int_{\mathbb{R}^{n}}F^{n}(\nabla\tilde{u})dx=\int_{
\mathbb{R}^{n}} F^{n}(\nabla u)dx\leq\int_{
\mathbb{R}^{n}}F^{n}( \nabla u_{k})dx
\]
and
\[
\int_{\mathbb{R}
^{n}}\left\vert \tilde{u}\right\vert ^{n}dx=\tau^{n}\int_{%
\mathbb{R}
^{n}}\left\vert u\right\vert ^{n}dx\leq\int_{%
\mathbb{R}
^{n}}\left\vert u_{k}\right\vert ^{n}dx.
\]
Thus
\[
\Vert\tilde{u}\Vert_{F}=\int_{
\mathbb{R}
^{n}}\left( F^{n}(\nabla\tilde{u})+\left\vert \tilde
{u}\right\vert ^{n}\right)  dx\leq1.
\]

By (\ref{4}), it follows that

\begin{align*}
S &  \geq\int_{
\mathbb{R}
^{n}}\Phi\left( \lambda_{n}\tilde{u}^{\frac{n}{n-1}} \left(  1+\alpha\left\Vert \tilde{u}\right\Vert
_{n}^{n}\right)^{\frac{1}{n-1}}\right)  dx\\
&  =\tau^{n}\int_{
\mathbb{R}
^{n}}\Phi\left(  \lambda_{n}u^{\frac{n}{n-1}}\left(  1+\alpha\tau^{n}\left\Vert u\right\Vert
_{{n}}^{{n}}\right)  ^{\frac{1}{n-1}}\right)  dx\\
&  \geq\tau^{n}\int_{
\mathbb{R}
^{n}}\Phi\left(  \underset{k\rightarrow\infty}{\lim}\alpha_{k}u^{\frac{n}%
{n-1}}\right)  dx+o(1)\\
&  =\int_{
\mathbb{R}
^{n}}\left(  \Phi\left(  \underset{k\rightarrow\infty}{\lim}\alpha_{k}%
u^{\frac{n}{n-1}}\right)  +\left(  \tau^{n}-1\right)  \frac{\underset{k\rightarrow\infty}{\lim}\alpha_{k}^{n-1}u^{n}}{\left(
n-1\right)  !}\right)  dx\\
& \ \ +\left(  \tau^{n}-1\right)  \int_{
\mathbb{R}
^{n}}\left(  \Phi\left(  \underset{k\rightarrow\infty}{\lim}\alpha_{k}%
u^{\frac{n}{n-1}}\right)  -\frac{\underset{k\rightarrow\infty}{\lim}\alpha_{k}^{n-1}u^{n}}{\left(
n-1\right)  !}\right)dx  +o(1)\\
&  \geq\left(  \tau^{n}-1\right) \int_{
\mathbb{R}^{n}} \left(\Phi\left(  \underset{k\rightarrow\infty}{\lim}\alpha_{k}u^{\frac{n}%
{n-1}}\right)  -\frac{\underset{k\rightarrow\infty}{\lim}\alpha_{k}^{n-1}u^{n}}{\left(
n-1\right)  !}\right)dx  \\
& \ \ +\underset{k\rightarrow\infty}{\lim}\int_{
\mathbb{R}
^{n}}\Phi\left(  \alpha_{k}u_{k}^{\frac{n}{n-1}}\right)  dx+o(1)\\
&  =S+\left(  \tau^{n}-1\right)  \int_{
\mathbb{R}
^{n}}\left(  \Phi\left(  \underset{k\rightarrow\infty}{\lim}\alpha_{k}%
u^{\frac{n}{n-1}}\right)  -\frac{\underset{k\rightarrow\infty}{\lim}\alpha_{k}^{n-1}u^{n}}{\left(
n-1\right)  !}\right)  dx+o(1).
\end{align*}

Since $\Phi\left(  \underset{k\rightarrow\infty}{\lim}\alpha_{k}u^{\frac
{n}{n-1}}\right)  -\frac{\underset{k\rightarrow\infty}{\lim}\alpha_{k}^{n-1}u^{n}}{\left(
n-1\right)  !}>0$, we get $\tau=1$, thus
\[
\underset{k}{\lim}\int_{
\mathbb{R}
^{n}}\Phi\left(  \alpha_{k}u_{k}^{\frac{n}{n-1}}\right)  dx=\int_{
\mathbb{R}
^{n}}\Phi\left(  \lambda_{n}\left(  1+\alpha\left\Vert u\right\Vert _{{n}}
^{{n}}\right)  ^{\frac{1}{n-1}}u^{\frac{n}{n-1}}\right)  dx.
\]
So $u$ is an extremal function and the proof of Theorem \ref{theorem5.1} is fininshed.
\end{proof}

\subsection{\bigskip Proof in the case of $\sup_{k}c_{k}=+\infty$}
In this subsection, we will show that the existence of the extremal functions of
 Moser-Trudinger ineuqality involving $L^{n}$ norm in $
\mathbb{R}^{n}$ in the case of $\sup_{k}c_{k}=+\infty$. In order to prove the existence of the extremal functions, we need the
following result due to Zhou and Zhou \cite{ZZ1}, which
often plays a key role in the proof of existence result. This method has been widely used to prove the existence of the extremal functions of many kinds of Moser-Trudinger inequality (see \cite{liruf,Y2,lu-yang 1,zhu}).

\begin{lemma}
\bigskip\label{Z-Z} Assume that $\left\{  u_{k}\right\}$ is a normalized concentrating sequence in  $W^{1.n}_{0}(\mathcal{W}_{1})$
with a blow up point at the orgin, i.e.  $\int_{\mathcal{W}_{1}}F^{n}(\nabla u_{k})dx=1$, $u_{k}\rightharpoonup0$ weakly in $W_{0}^{1,n}(\mathcal{W}_{1})$
and $\lim\limits_{k\rightarrow +\infty}\int_{\mathcal{W}_{1}\backslash \mathcal{W}_{r}}F^{n}(\nabla u_{k})dx=0$ for any $0<r<1$, then
\[
\underset{k\rightarrow \infty}{\lim\sup}\int_{\mathcal{W}_{1}}e^{\lambda_{n}\left\vert
u_{k}\right\vert ^{\frac{n}{n-1}}dx}\leq \kappa_{n}\left( 1+\exp\left\{1+\frac{1}{2}
+\ldots+\frac{1}{n-1}\right\}\right).
\]
\end{lemma}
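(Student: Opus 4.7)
The plan is to reduce the estimate via convex symmetrization to a sharp one-dimensional Carleson--Chang type inequality on the half-line.

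First, I would invoke the convex symmetrization of Alvino--Ferone--Lions--Trombetti \cite{AVP}: the anisotropic rearrangement $u_k^\star$ preserves the distribution function of $u_k$ (so the left-hand side is unchanged), does not increase $\int_{\mathcal{W}_1}F^n(\nabla u_k)\,dx$, and preserves the concentration at the origin. Hence WLOG $u_k=u_k(r)$ depends only on $r=F^0(x)$ and is decreasing in $r$. By Lemma~\ref{2-01} together with the anisotropic co-area formula, $F(\nabla u_k)=|u_k'(r)|$ and the Wulff perimeter of $\mathcal{W}_r$ is $n\kappa_n r^{n-1}$, so the normalization reads $n\kappa_n\int_0^1|u_k'|^n r^{n-1}\,dr=1$ and the concentration hypothesis reads $n\kappa_n\int_r^1|u_k'|^n t^{n-1}\,dt\to 0$ for every fixed $r\in(0,1)$.

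Next I would perform the logarithmic change of variables $s=-n\log r$ and normalize by setting $\phi_k(s)=n^{(n-1)/n}(n\kappa_n)^{1/n}u_k(e^{-s/n})$. A direct computation, using the identity $\lambda_n(n\kappa_n)^{-1/(n-1)}=n$, yields $\phi_k(0)=0$ (since $u_k\in W^{1,n}_0(\mathcal{W}_1)$), $\phi_k$ nondecreasing, $\int_0^\infty|\phi_k'|^n\,ds=1$, the concentration rewrites as $\int_0^T|\phi_k'|^n\,ds\to 0$ for each fixed $T>0$ (so in particular $\phi_k\to 0$ locally uniformly by H\"older), and the crucial identity
\begin{equation*}
\int_{\mathcal{W}_1}e^{\lambda_n u_k^{n/(n-1)}}\,dx\;=\;\kappa_n\int_0^\infty e^{\phi_k(s)^{n/(n-1)}-s}\,ds.
\end{equation*}

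The lemma then reduces to the sharp one-dimensional inequality
\begin{equation*}
\limsup_{k\to\infty}\int_0^\infty e^{\phi_k(s)^{n/(n-1)}-s}\,ds\;\le\;1+\exp\!\Bigl(1+\tfrac12+\cdots+\tfrac{1}{n-1}\Bigr)
\end{equation*}
for any such sequence $\{\phi_k\}$. The ``$1$'' comes from a bulk region $[0,T_k]$ (with $T_k\to\infty$ chosen suitably slowly so that $\phi_k\to 0$ uniformly on it): here $e^{\phi_k^{n/(n-1)}-s}\to e^{-s}$ pointwise with uniform $L^\infty$-bound $1$, giving $\int_0^{T_k}\to\int_0^\infty e^{-s}\,ds=1$. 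The exponential-of-harmonic-sum term arises from the complementary ``bubble region'' $[T_k,\infty)$, on which the shifted function $\psi_k(t):=\phi_k(T_k+t)$ satisfies $\psi_k(0)\to 0$, $\int_0^\infty|\psi_k'|^n\,dt\to 1$ and must be estimated through the sharp Carleson--Chang bound on the half-line.

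The main obstacle is exactly this one-dimensional sharp estimate on the bubble region, yielding the harmonic-sum constant $H_{n-1}=\sum_{j=1}^{n-1}1/j$. For $n=2$ this is the original Carleson--Chang inequality giving the constant $e$; the general-$n$ extension proceeds via a variational argument against admissible piecewise-affine-in-$t^{(n-1)/n}$ profiles, with the harmonic sum emerging from the resulting explicit integral through repeated integration by parts and the Taylor expansion of $(1+x)^{n/(n-1)}$. The convex symmetrization performed in the first step ensures that no new difficulty specific to the anisotropic setting arises beyond the classical isotropic 1D inequality, so the bound transfers directly to the present anisotropic framework, completing the plan.
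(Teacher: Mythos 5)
The paper itself does not prove this lemma; it is quoted verbatim from Zhou--Zhou \cite{ZZ1}, so there is no in-text proof to compare against. Your reduction is nevertheless the standard route: convex symmetrization to radial decreasing profiles, the substitution $s=-n\log F^0(x)$, and the resulting one-dimensional functional $\kappa_n\int_0^\infty e^{\phi_k(s)^{n/(n-1)}-s}\,ds$ with $\phi_k(0)=0$ and $\int_0^\infty|\phi_k'|^n\,ds\le 1$. Your change of variables and the identity $\lambda_n(n\kappa_n)^{-1/(n-1)}=n$ are correct, and the resulting constant reproduces $\pi(1+e)$ at $n=2$, so the architecture agrees with Carleson--Chang \cite{CC}, Lin \cite{lin}, and Zhou--Zhou \cite{ZZ1}.

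The gap is that the entire content of the lemma is the sharp one-dimensional bound
\[
\underset{k\rightarrow\infty}{\lim\sup}\int_0^\infty e^{\phi_k(s)^{n/(n-1)}-s}\,ds\le 1+\exp\Bigl(1+\tfrac{1}{2}+\cdots+\tfrac{1}{n-1}\Bigr)
\]
for concentrating $\phi_k$, and you do not prove it: you identify it as ``the main obstacle'' and then offer only the one-sentence sketch of ``a variational argument against admissible piecewise-affine-in-$t^{(n-1)/n}$ profiles, with the harmonic sum emerging from repeated integration by parts and Taylor expansion.'' That is not an argument. The Carleson--Chang estimate is not obtained by testing against a prescribed family of profiles; it requires a pointwise comparison of $\phi_k$ with an explicit extremal near the crossing level together with a delicate balancing of two competing quantities, and the passage from $n=2$ to general $n$ is precisely where the harmonic sum emerges after substantial additional computation. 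As written, your proposal reduces the lemma to a statement of essentially the same depth rather than proving it, and that key estimate should either be carried out in detail or invoked explicitly from Lin or Zhou--Zhou. A secondary, more minor gap: the assertion that convex symmetrization ``preserves the concentration at the origin'' is not automatic. The rearranged $u_k^\star$ inherits $\int_{\mathcal{W}_1}F^n(\nabla u_k^\star)\,dx\le 1$ and $u_k^\star\rightharpoonup 0$, but $\int_{\mathcal{W}_1\setminus\mathcal{W}_r}F^n(\nabla u_k^\star)\,dx\to 0$ must either be proved or bypassed by a dichotomy: if $u_k^\star$ does not concentrate, Lions' lemma yields uniform integrability of $e^{\lambda_n|u_k^\star|^{n/(n-1)}}$ and the $\lim\sup$ is at most $\kappa_n$, which already sits below the claimed bound.
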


\begin{lemma}\label{lem5.2}
\label{attain lemma} If $S$ can not be attained, then
\[
S\leq \kappa_{n}\exp\left\{  \lambda_{n}A+1+\frac{1}%
{2}+\ldots+\frac{1}{n-1}\right\},
\]
where $A$ is the constant in (\ref{13}).
\end{lemma}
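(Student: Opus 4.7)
The plan is to combine Lemma \ref{lem4.6} with the Carleson--Chang concentration bound of Lemma \ref{Z-Z}. From Lemma \ref{lem4.6} and the defining relation $r_k^n = \lambda_k/(\mu_k c_k^{n/(n-1)} e^{\alpha_k c_k^{n/(n-1)}})$ together with $\mu_k \to 1$ (Remark \ref{remark}), I reduce the problem to showing
\[
\limsup_{k\to\infty} r_k^n e^{\alpha_k c_k^{n/(n-1)}} \leq \kappa_n e^{\lambda_n A + H_n}, \qquad H_n := 1 + \tfrac{1}{2} + \cdots + \tfrac{1}{n-1}.
\]

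For small $L > 0$, I would set $M_k^n := \int_{\mathcal{W}_L} F^n(\nabla u_k)\,dx$ and consider the normalized truncation $w_k := (u_k - u_k(L))/M_k \in W_0^{1,n}(\mathcal{W}_L)$, so that $\int_{\mathcal{W}_L} F^n(\nabla w_k)\,dx = 1$. Corollary \ref{tent to 0} implies $M_k \to 1$ and that $\{w_k\}$ is a normalized concentrating sequence at the origin; the Dirichlet-invariant rescaling $\tilde w_k(y) := w_k(Ly)$ transports the problem to $\mathcal{W}_1$, and Lemma \ref{Z-Z} applied to $\tilde w_k$ yields
\[
\limsup_{k\to\infty} \int_{\mathcal{W}_L} e^{\lambda_n w_k^{n/(n-1)}}\,dx \leq \kappa_n L^n(1 + e^{H_n}).
\]
To isolate the peak contribution, I would change variables $x = r_k y$ on $\mathcal{W}_{Rr_k}$; the blow-up analysis from Remark \ref{remark} gives $\lambda_n w_k(r_k y)^{n/(n-1)} = \lambda_n w_k(0)^{n/(n-1)} + \psi(y) + o(1)$ with $\int_{\mathbb{R}^n} e^\psi\,dy = 1$ (since $\|u_k\|_n \to 0$), so the peak integral equals $\lim_k r_k^n e^{\lambda_n w_k(0)^{n/(n-1)}}$. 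Since the bulk contribution tends to $\kappa_n L^n$, subtracting yields
\[
\limsup_{k\to\infty} r_k^n e^{\lambda_n w_k(0)^{n/(n-1)}} \leq \kappa_n L^n e^{H_n}.
\]

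The final and most delicate step is to compare the exponents $\alpha_k c_k^{n/(n-1)}$ and $\lambda_n w_k(0)^{n/(n-1)}$. From $c_k = M_k w_k(0) + u_k(L)$ and Taylor expansion, combined with $c_k^{1/(n-1)} u_k(L) \to G_\alpha(L)$ from Lemma \ref{tend to G 1}, one obtains
\[
c_k^{n/(n-1)} = M_k^{n/(n-1)} w_k(0)^{n/(n-1)} + \tfrac{n}{n-1} G_\alpha(L) + o(1).
\]
Adding the corrections $\lambda_n(M_k^{n/(n-1)} - 1) w_k(0)^{n/(n-1)}$, evaluated via (\ref{add 9}) which computes $\lim_k c_k^{n/(n-1)}(1 - M_k^n)$ in terms of $\|G_\alpha\|_n^n$, $G_\alpha(L)$ and $\int_{\mathcal{W}_L} G_\alpha^{n-1}\,dx$, and $(\alpha_k - \lambda_n) c_k^{n/(n-1)}$, controlled by $c_k^{n/(n-1)} \|u_k\|_n^n \to \|G_\alpha\|_n^n$, the $L$-dependent combinations simplify to $(1-\alpha)\int_{\mathcal{W}_L} G_\alpha^{n-1}(G_\alpha - G_\alpha(L))\,dx$ which vanishes as $L \to 0$, leaving
\[
\alpha_k c_k^{n/(n-1)} - \lambda_n w_k(0)^{n/(n-1)} = \lambda_n G_\alpha(L) + o(1).
\]
Substituting $G_\alpha(L) = -\tfrac{n}{\lambda_n}\log L + A + O(L^n \log^n L)$ from Lemma \ref{tend to G} and multiplying by the bubble bound yields
\[
\limsup_{k\to\infty} r_k^n e^{\alpha_k c_k^{n/(n-1)}} \leq \kappa_n L^n e^{H_n} \cdot L^{-n} e^{\lambda_n A + o(1)} = \kappa_n e^{\lambda_n A + H_n},
\]
as desired. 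The main obstacle is precisely this exponent cancellation, where three a priori unrelated corrections must combine to $\lambda_n G_\alpha(L)$ modulo terms vanishing with $L$; this delicate identity is enabled by the Green-function formula (\ref{add 9}).
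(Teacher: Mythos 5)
Your proposal is correct and follows essentially the same strategy the paper uses, relying on the same chain of key lemmas: Lemma \ref{lem4.6} to reduce the problem to a bound on the bubble integral (via the identity $\lambda_k/c_k^{n/(n-1)}=\mu_k r_k^n e^{\alpha_k c_k^{n/(n-1)}}$ and $\mu_k\to 1$), the Carleson--Chang type bound of Lemma \ref{Z-Z} applied to the energy-normalized truncation, and the Green-function expansion (\ref{13}) together with the energy identity (\ref{add 9}) to carry out the exponent cancellation. The only real difference is in the bookkeeping of the final comparison: the paper works with a pointwise inequality $\alpha_k u_k^{n/(n-1)}\leq \lambda_n\tilde u_k^{n/(n-1)}/\tau_k^{1/(n-1)}+\lambda_n A-n\log\delta + o(1)$ valid on $\mathcal{W}_{Lr_k}$, where $\tilde u_k=(u_k-u_k(\delta))^+$ and $\tau_k=\int_{\mathcal{W}_\delta}F^n(\nabla u_k)\,dx$, and integrates this against the Carleson--Chang bound directly; you instead change variables $x=r_k y$, identify the peak contribution as $r_k^n e^{\lambda_n w_k(0)^{n/(n-1)}}$ and the bulk contribution as $\kappa_n L^n$, subtract, and then prove the scalar identity $\alpha_k c_k^{n/(n-1)}-\lambda_n w_k(0)^{n/(n-1)}=\lambda_n G_\alpha(L)+o(1)$. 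These are the same Taylor expansions in different order -- your $M_k^n$ is precisely the paper's $\tau_k$, and your $w_k$ is the paper's $\tilde u_k/\tau_k^{1/n}$. Your packaging makes the role of the bubble integral $\int e^\psi=1$ (from (\ref{add3}) with $\lim\|u_k\|_n^n=0$) and the peak-exponent comparison slightly more transparent, while the paper's pointwise inequality avoids the small subtlety of arguing $\limsup(\text{peak}_k+\text{bulk}_k)\geq \liminf\text{bulk}_k+\limsup\text{peak}_k$; both are sound and reach the same bound after $L\to 0$.
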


\begin{proof}
\ By Lemma \ref{tend to G}, it follows that
\begin{align*}
&  \bigskip\lim_{k\rightarrow\infty}\int_{\mathbb{R}^{n}\backslash \mathcal{W}_{\delta}}\left(  F^{n}(\nabla u_{k})
+\left\vert u_{k}\right\vert ^{n}\right)  dx\\
=&c_{k}^{\frac{-n}{n-1}}\left(  \alpha\int_{
\mathbb{R}
^{n}\backslash \mathcal{W}_{\delta}}U_{k}^{{n}}dx+G_{\alpha}(\delta)\left(1+(\alpha-1)\int_{\mathcal{W}_\delta}G_{\alpha}^{n-1}dx\right)\right)  \\
=&c_{k}^{\frac{-n}{n-1}}\left(  \alpha\underset{k\rightarrow\infty}{\lim
}\left\Vert U_{k}\right\Vert _{{n}}^{{n}}+G_{\alpha}(\delta)\left(1+(\alpha-1)\int_{\mathcal{W}_\delta}G_{\alpha}^{n-1}dx\right)\right)  .
\end{align*}

Set $\tilde{u}_{k}\left(  x\right)  =\left(  u_{k}\left(  x\right)
-u_{k}\left(  \delta\right)  \right)  ^{+}$, then
\begin{align}
\bigskip\bigskip\int_{\mathcal{W}_{\delta}}F^{n}( \nabla\tilde{u}_{k})
dx &  \leq\int_{\mathcal{W}_{\delta}}F^{n}(\nabla u_{k})dx=\tau_{k}:=1-\int_{\mathbb{R}
^{n}\backslash \mathcal{W}_{\delta}}\left( F^{n}( \nabla u_{k})+\left\vert u_{k}\right\vert ^{n}\right)  dx-\int_{\mathcal{W}_{\delta}}\left\vert
u_{k}\right\vert ^{n}dx\nonumber\\
&  =1-c_{k}^{\frac{-n}{n-1}}\left(  \alpha\underset{k\rightarrow\infty}{\lim
}\left\Vert U_{k}\right\Vert _{{n}}^{{n}}-\frac{n}{\lambda_{n}}\log
\delta+A+o_{k}\left(  1\right)  +O_{\delta}\left(  1\right)  \right)
.\label{18}
\end{align}

When $x\in \mathcal{W}_{Lr_{k}}$, by Lemma \ref{tend to G 1} and (\ref{18}), it follows that
\begin{align*}
&  \alpha_{k}u_{k}^{\frac{n}{n-1}}\leq\lambda_{n}\left(  1+\alpha\left\Vert
u_{k}\right\Vert _{{n}}^{{n}}\right)  ^{\frac{1}{n-1}}\left(  \tilde{u}
_{k}+u_{k}\left(  \delta\right)  \right)  ^{\frac{n}{n-1}}\\
\leq& \lambda_{n}\left\vert \tilde{u}_{k}\right\vert ^{\frac{n}{n-1}}
+\frac{n\lambda_{n}}{n-1}\left\vert \tilde{u}_{k}\right\vert ^{\frac{1}{n-1}
}\left\vert u_{k}\left(  \delta\right)  \right\vert +\frac{\lambda_{n}\alpha
}{n-1}\Vert c_{k}^{\frac{1}{n-1}}u_{k}\Vert _{{n}}^{{n}}
+o_{k}\left(  1\right)  \\
\leq&\lambda_{n}\left\vert \tilde{u}_{k}\right\vert ^{\frac{n}{n-1}}
+\frac{n\lambda_{n}}{n-1}\left\vert c_{k}\right\vert ^{\frac{1}{n-1}}\left\vert
u_{k}\left(  \delta\right)  \right\vert +\frac{\lambda_{n}\alpha}{n-1}
\underset{k\rightarrow\infty}{\lim}\left\Vert U_{k}\right\Vert _{{n}}^{{n}
}+o_{k}\left(  1\right)  \\
\leq&\lambda_{n}\left\vert \tilde{u}_{k}\right\vert ^{\frac{n}{n-1}}
+\frac{n\lambda_{n}}{n-1}\left\vert G_{\alpha}\left(  \delta\right)
\right\vert +\frac{\lambda_{n}\alpha}{n-1}\underset{k\rightarrow\infty}{\lim
}\left\Vert U_{k}\right\Vert _{{n}}^{{n}}+o_{k}\left(  1\right)  \\
=&\lambda_{n}\left\vert \tilde{u}_{k}\right\vert ^{\frac{n}{n-1}}-\frac{n^{2}
}{n-1}\log\delta+\frac{n\lambda_{n}}{n-1}A+\frac{\lambda_{n}\alpha}
{n-1}\underset{k\rightarrow\infty}{\lim}\left\Vert U_{k}\right\Vert _{{n}
}^{{n}}+o_{k}\left(  1\right)  +o_{\delta}\left(  1\right)  \\
\leq&\frac{\lambda_{n}\left\vert \tilde{u}_{k}\right\vert ^{\frac{n}{n-1}}
}{\tau_{k}^{\frac{1}{n-1}}}+\lambda_{n}A-\log\delta^{n}+o_{k}\left(  1\right)
+o_{\delta}\left(  1\right).
\end{align*}
Integrating the above estimates on $\mathcal{W}_{Lr_{k}}$, we obtain
\begin{align*}
\int_{\mathcal{W}_{Lr_{k}}}\left(  \exp\left\{  \alpha_{k}u_{k}^{\frac{n}{n-1}}\right\}
-1\right)  dx &  \leq\delta^{-n}\exp\left\{  \lambda_{n}A+o_{k}\left(
1\right)  \right\} \\
&  \cdot\int_{\mathcal{W}_{Lr_{k}}}\left(  \exp\left\{  \alpha_{k}u_{k}^{\frac{n}{n-1}
}/\tau_{k}^{\frac{1}{n-1}}\right\}  -1\right)  dx+o_{k}\left(  1\right).
\end{align*}
By Lemma \ref{Z-Z}, we get
\[
\int_{\mathcal{W}_{Lr_{k}}}\left(  \exp\left\{  \alpha_{k}u_{k}^{\frac{n}{n-1}}\right\}
-1\right)  dx\leq\kappa_{n}\exp\left\{  \lambda_{n}A+1+\frac{1}
{2}+\ldots+\frac{1}{n-1}\right\}.
\]
By Lemma \ref{lem4.6}, we obtain
\begin{align}
\underset{k\rightarrow\infty}{\lim}\int_{
\mathbb{R}
^{n}}\Phi\left(  \alpha_{k}u_{k}^{\frac{n}{n-1}}\right)  dx &  \leq
\underset{L\rightarrow\infty}{\lim}\underset{k\rightarrow\infty}{\lim}%
\int_{\mathcal{W}_{Lr_{k}}}\left(  \exp\left(  \alpha_{k}u_{k}^{\frac{n}{n-1}}\right)
-1\right)  dx\nonumber\\
\leq& \kappa_{n}\exp\left\{  \lambda_{n}A+1+\frac{1}{2}
+\ldots+\frac{1}{n-1}\right\}  .
\end{align}
Thus the conclusion of Lemma \ref{lem5.2} holds.
\end{proof}

In the following, we will construct a function sequence $\left\{
u_{\varepsilon}\right\}  \subset W^{1,n}\left(
\mathbb{R}
^{n}\right)  $ with $\left\Vert u_{\varepsilon}\right\Vert _{F}=1$ such that%

\[
\int_{
\mathbb{R}
^{n}}\Phi\left(  \lambda_{n}u_{\varepsilon}^{\frac{n}{n-1}}\right)
dx>\kappa_{n}\exp\left\{
\lambda_{n}A+1+\frac{1}{2}+\ldots+\frac{1}{n-1}\right\} .
\]
\begin{proof}
[Proof of Theorem \ref{attain} in the case of $\sup_{k}c_{k}=+\infty$] Let
\[
u_{\varepsilon}=\left\{
\begin{array}
[c]{c}%
\frac{C-C^{\frac{-1}{n-1}}\left(  \frac{n-1}{\lambda_{n}}\log\left(
1+c_{n}(\frac{F^{0}(x)}{\varepsilon})^{\frac{n}{n-1}}\right)
-B_{\varepsilon}\right)  }{\left(  1+\alpha C^{\frac{-n}{n-1}}\left\Vert
G_{\alpha}\right\Vert _{{n}}^{{n}}\right)  ^{\frac{1}{n}}},\text{
\ \ \ \ \ \ \ \ \ \ \ \ \ \ \ \ \ \ } F^{0}(x)\leq
R\varepsilon,\\
\frac{G_{\alpha}\left( F^{0}(x)\right)  }{\left(  C^{\frac
{n}{n-1}}+\alpha\left\Vert G_{\alpha}\right\Vert _{{n}}^{{n}}\right)
^{\frac{1}{n}}},\text{
\ \ \ \ \ \ \ \ \ \ \ \ \ \ \ \ \ \ \ \ \ \ \ \ \ \ \ \ \ } F^{0}(x)>R\varepsilon
,
\end{array}
\right.
\]
where $c_{n}=\kappa_{n}^{\frac{1}{n-1}}$,
$B_{\varepsilon}$, $R$ and $C$ depending on $\varepsilon$ will also be
determined later, such that

\medskip

\bigskip (i) \ $R\varepsilon\rightarrow0$, $R\rightarrow\infty$ and
$C\rightarrow\infty$, as $\varepsilon\rightarrow0,$

\medskip

(ii) $\ \frac{C-\frac{n-1}{\lambda_{n}}C^{\frac{-1}{n-1}}\log\left(
1+c_{n}\left\vert R\right\vert ^{\frac{n}{n-1}}\right)  +C^{\frac{-1}{n-1}}B_{\varepsilon}
}{\left(  1+\alpha C^{\frac{-n}{n-1}}\left\Vert G_{\alpha}\right\Vert _{{n}
}^{{n}}\right)  ^{\frac{1}{n}}}=\frac{G_{\alpha}\left(  R\varepsilon\right)
}{\left(  C^{\frac{n}{n-1}}+\alpha\left\Vert G_{\alpha}\right\Vert _{{n}}
^{{n}}\right)  ^{\frac{1}{n}}}.$

We can obtain the information of $B_{\varepsilon}$, $C$ and $R$ by
normalizating $u_{\varepsilon}$. By Lemma \ref{tend to G}, it can check that
\begin{align*}
&  \int_{\mathbb{R}^{n}\backslash \mathcal{W}_{R\varepsilon}}\left( F^{n}(\nabla u_{\varepsilon
})+\left\vert u_{\varepsilon}\right\vert ^{n}\right)  dx\\
=& \frac{1}{C^{\frac{n}{n-1}}+\alpha\left\Vert G_{\alpha}\right\Vert _{{n}
}^{{n}}}\int_{\mathbb{R}
^{n}\backslash \mathcal{W}_{R\varepsilon}}\left( F^{n}(\nabla G_{\alpha
})+\left\vert G_{\alpha}\right\vert ^{n}\right)  dx\\
=&\frac{1}{C^{\frac{n}{n-1}}+\alpha\left\Vert G_{\alpha}\right\Vert _{{n}
}^{{n}}}\left(  -G_{\alpha}\left(  R\varepsilon\right)  \int_{\partial
\mathcal{W}_{R\varepsilon}}\left( F ^{n-2}( \nabla G_{\alpha})
\frac{\partial G_{\alpha}}{\partial n}\right)  dx+\alpha\int_{
\mathbb{R}
^{n}\backslash \mathcal{W}_{R\varepsilon}}\left\vert G_{\alpha}\right\vert ^{n}dx\right)\\
=&\frac{n\kappa_{n}G_{\alpha}\left(  R\varepsilon\right) \left\vert
G^{\prime}\left(  R\varepsilon\right)  \right\vert ^{n-1}\left(
R\varepsilon\right)  ^{n-1}+\alpha\int_{
\mathbb{R}
^{n}\backslash \mathcal{W}_{R\varepsilon}}\left\vert G_{\alpha}\right\vert ^{n}%
dx}{C^{\frac{n}{n-1}}+\alpha\left\Vert G_{\alpha}\right\Vert _{{n}}^{{n}}},
\end{align*}
and
\begin{align*}
\int_{\mathcal{W}_{R\varepsilon}}F^{n}( \nabla u_{\varepsilon})dx
 &  =\frac{n-1}{\lambda_{n}\left(  C^{\frac{n}{n-1}}
+\alpha\left\Vert G_{\alpha}\right\Vert _{{n}}^{{n}}\right)  }\int_{0}
^{c_{n}R^{\frac{n}{n-1}}}\frac{u^{n-1}}{\left(  1+u\right)  ^{n}}du\\
&  =\frac{n-1}{\lambda_{n}\left(  C^{\frac{n}{n-1}}+\alpha\left\Vert G_{\alpha
}\right\Vert _{{n}}^{{n}}\right)  }\int_{0}^{c_{n}R^{\frac{n}{n-1}}}
\frac{\left(  \left(  1+u\right)  -1\right)  ^{n-1}}{\left(  1+u\right)  ^{n}
}du\\
&  =\frac{n-1}{\lambda_{n}\left(  C^{\frac{n}{n-1}}+\alpha\left\Vert G_{\alpha
}\right\Vert _{{n}}^{{n}}\right)  }\left(  \underset{k=0}{\overset{n-2}{\sum}
}\frac{C_{n-1}^{k}\left(  -1\right)  ^{n-1-k}}{n-k-1}+\right. \\
&  +\left.  \log\left(  1+c_{n}R^{\frac{n}{n-1}}\right)  +O\left(
R^{\frac{-n}{n-1}}\right)  \right).
\end{align*}

Using the fact that%

\[
E:=\underset{k=0}{\overset{n-2}{\sum}}\frac{C_{n-1}^{k}\left(  -1\right)
^{n-1-k}}{n-k-1}=-\left(  1+\frac{1}{2}+\frac{1}{3}+\cdots+\frac{1}%
{n-1}\right)  ,
\]
we obtain
\[
\int_{\mathcal{W}_{R\varepsilon}}F^{n}(\nabla u_{\varepsilon})dx=\frac{n-1}{\lambda_{n}\left(  C^{\frac{n}{n-1}}+\alpha
\left\Vert G_{\alpha}\right\Vert _{{n}}^{{n}}\right)  }\left(  E+\log\left(
1+c_{n}R^{\frac{n}{n-1}}\right)  +O\left(  R^{\frac{-n}{n-1}}\right)  \right).
\]

It is easy to check that
\[
\int_{\mathcal{W}_{R\varepsilon}} \left\vert u_{\varepsilon}\right\vert
^{n} dx=O(C^{n}(R\varepsilon)^{n}),
\]
then
\begin{align*}
&  \int_{\mathbb{R}
^{n}}\left( F^{n}( \nabla u_{\varepsilon})+\left\vert
u_{\varepsilon}\right\vert ^{n}\right)  dx=\frac{1}{\lambda_{n}\left(
C^{\frac{n}{n-1}}+\alpha\left\Vert G_{\alpha}\right\Vert _{{n}}^{{n}}\right)
}\left(  \left(  n-1\right)  E+\left(  n-1\right)  \log\left(  1+c_{n}
R^{\frac{n}{n-1}}\right) \right. \\
& -\log\left(  R\varepsilon\right)  ^{n}+\lambda_{n}A+\alpha\lambda
_{n}\left\Vert G_{\alpha}\right\Vert _{{n}}^{{n}}+O\left( \phi\right),
\end{align*}
where
\[
\phi= C^{n} \left(R\varepsilon\right)^{n}+\left(  R\varepsilon
\right)  ^{n}\log^{n}(R\varepsilon)+R^{\frac{-n}{n-1}}+C^{\frac{-2n}{n-1}%
}+C^{\frac{n^{2}}{n-1}}(R\varepsilon)^{n}  .
\]

Because $\int_{
\mathbb{R}
^{n}}\left(F^{n}(\nabla u_{\varepsilon})+\left\vert
u_{\varepsilon}\right\vert ^{n}\right)  dx=1$, it follows that

\begin{align*}
&  \lambda_{n}\left(  C^{\frac{n}{n-1}}+\alpha\left\Vert G_{\alpha}\right\Vert
_{{n}}^{{n}}\right)  =\left(  n-1\right)  E+\left(  n-1\right)  \log\left(
1+c_{n}R^{\frac{n}{n-1}}\right) \\
&  -\log\left(  R\varepsilon\right)  ^{n}+\lambda_{n}A+\alpha\lambda
_{n}\left\Vert G_{\alpha}\right\Vert _{{n}}^{{n}}+O\left( \phi\right),
\end{align*}
namely,
\begin{equation}
\lambda_{n}C^{\frac{n}{n-1}}=\left(  n-1\right)  E+\log\kappa_{n}-\log\varepsilon^{n}+\lambda_{n}A+O\left( \phi\right). \label{20}%
\end{equation}
By (ii) we obtain

\[
C-C^{\frac{-1}{n-1}}\left(  \frac{n-1}{\lambda_{n}}\log\left(  1+c_{n}
\left\vert R\right\vert ^{\frac{n}{n-1}}\right)  -B_{\varepsilon}\right)
=\frac{-\frac{n}{\lambda_{n}}\log(R\varepsilon)+A+O\left( \phi\right)}{C^{\frac{1}{n-1}}}.
\]
Thus
\begin{equation}
C^{\frac{n}{n-1}}=-\frac{n}{\lambda_{n}}\log\varepsilon+\log\kappa_{n}-B_{\varepsilon}+A+O\left( \phi\right). \label{21}%
\end{equation}
Combining (\ref{20}) and \bigskip(\ref{21}), it is easy to see that
\begin{equation}
B_{\varepsilon}=-\frac{n-1}{\lambda_{n}}E+O\left( \phi\right).\label{22}
\end{equation}

 Letting $R=-\log\varepsilon\,$, which satisfies $R\varepsilon
\rightarrow0$ as $\varepsilon\rightarrow0$, then
\begin{equation}
\left\Vert u_{\varepsilon}\right\Vert _{{n}}^{{n}}=\frac{\left\Vert G_{\alpha
}\right\Vert _{{n}}^{{n}}+O\left(  C^{\frac{n^{2}}{n-1}}R^{n}\varepsilon
^{n}\right)  +O\left((R\varepsilon)^{n}\left(  -\log\left(  R\varepsilon
\right)  ^{n}\right)  \right)  }{C^{\frac{n}{n-1}}+\alpha\left\Vert G_{\alpha
}\right\Vert _{{n}}^{{n}}}.\text{ } \label{add5}%
\end{equation}
It is easy to check that when $\left\vert t\right\vert <1$,

$$\left(  1-t\right)  ^{\frac{n}{n-1}}\geq1-\frac{n}{n-1}t,\ \
\left(  1+t\right)  ^{-\frac{1}{n-1}}\geq1-\frac{t}{n-1}.$$
\ By using the above inequalities and \bigskip(\ref{add5}), we deduce that for any $x\in
\mathcal{W}_{R\varepsilon}$,
\begin{align*}
&  \lambda_{n}\left\vert u_{\varepsilon}\right\vert ^{\frac{n}{n-1}}\left(
1+\alpha\left\Vert u_{\varepsilon}\right\Vert _{{n}}^{{n}}\right)  ^{\frac
{1}{n-1}}\\
=&\lambda_{n}C^{\frac{n}{n-1}}\frac{\left(  1-C^{\frac{-n}{n-1}}\left(
\frac{n-1}{\lambda_{n}}\log\left(  1+c_{n} (\frac{F^{0}(x)}{\varepsilon
})^{\frac{n}{n-1}}\right)  -B_{\varepsilon}\right)  \right)
^{\frac{n}{n-1}}}{\left(  1+\alpha C^{\frac{-n}{n-1}}\left\Vert G_{\alpha
}\right\Vert _{{n}}^{{n}}\right)  ^{\frac{1}{n-1}}}\left(  1+\alpha\left\Vert
u_{\varepsilon}\right\Vert _{{n}}^{{n}}\right)  ^{\frac{1}{n-1}}\\
\geq& \lambda_{n}C^{\frac{n}{n-1}}\left(  1-\frac{n}{n-1}C^{\frac{-n}{n-1}
}\left(  \frac{n-1}{\lambda_{n}}\log\left(  1+c_{n} (\frac{F^{0}(x)}{\varepsilon
}) ^{\frac{n}{n-1}}\right)  -B_{\varepsilon}\right)
\right)  \\
\cdot& \left(  1-\alpha C^{\frac{-n}{n-1}}\left\Vert G_{\alpha}\right\Vert
_{{n}}^{{n}}\right)  ^{\frac{1}{n-1}}\left(  1+\alpha\left\Vert u_{\varepsilon
}\right\Vert _{{n}}^{{n}}\right)  ^{\frac{1}{n-1}}\\
\geq&\lambda_{n}C^{\frac{n}{n-1}}\left(  1-\frac{n}{n-1}C^{\frac{-n}{n-1}
}\left(  \frac{n-1}{\lambda_{n}}\log\left(  1+c_{n} (\frac{F^{0}(x)}{\varepsilon
})^{\frac{n}{n-1}}\right)  -B_{\varepsilon}\right)
\right)  \\
\cdot&\left(  1-\alpha C^{\frac{-n}{n-1}}\left\Vert G_{\alpha}\right\Vert
_{{n}}^{{n}}\right)  ^{\frac{1}{n-1}}\left(  1+\alpha\frac{\left\Vert
G_{\alpha}\right\Vert _{{n}}^{{n}}+O\left(  C^{\frac{n^{2}}{n-1}}
R^{n}\varepsilon^{n}\right)  +O\left(  R^{n}\varepsilon^{n}\left(
-\log\left(  R\varepsilon\right)  ^{n}\right)  \right)  }{C^{\frac{n}{n-1}}
}\right)  ^{\frac{1}{n-1}}\\
\geq&\lambda_{n}C^{\frac{n}{n-1}}\left(  1-\frac{n}{n-1}C^{\frac{-n}{n-1}
}\left(  \frac{n-1}{\lambda_{n}}\log\left(  1+c_{n} (\frac{F^{0}(x)}{\varepsilon
})^{\frac{n}{n-1}}\right)  -B_{\varepsilon}\right)
\right) \\
\cdot&\left(  1-\alpha^{2}C^{\frac{-2n}{n-1}}\left\Vert G_{\alpha
}\right\Vert _{n}^{2n}+C^{\frac{-n}{n-1}}\left(  O\left(  C^{\frac{n^{2}}
{n-1}}R^{n}\varepsilon^{n}\right)  +O\left(  R^{n}\varepsilon^{n}\left(
-\log\left(  R\varepsilon\right)  ^{n}\right)  \right)  \right)  \right)
^{\frac{1}{n-1}}\\
\geq& \lambda_{n}C^{\frac{n}{n-1}}\left(  1-\frac{n}{n-1}C^{\frac{-n}{n-1}
}\left(  \frac{n-1}{\lambda_{n}}\log\left(  1+c_{n} (\frac{F^{0}(x)}{\varepsilon
})^{\frac{n}{n-1}}\right)  -B_{\varepsilon}\right)
\right)  \\
\cdot&\left(  1-\frac{1}{n-1}\alpha^{2}C^{\frac{-2n}{n-1}}\left\Vert
G_{\alpha}\right\Vert _{n}^{2n}+C^{\frac{-n}{n-1}}\left(  O\left(
C^{\frac{n^{2}}{n-1}}R^{n}\varepsilon^{n}\right)  +O\left(  R^{n}
\varepsilon^{n}\left(  -\log\left(  R\varepsilon\right)  ^{n}\right)  \right)
\right)  \right)  \\
\geq& \lambda_{n}C^{\frac{n}{n-1}}-n\log\left(  1+c_{n} (\frac{F^{0}(x)}{\varepsilon
})^{\frac{n}{n-1}}\right)  +\frac{n\lambda_{n}}
{n-1}B_{\varepsilon}-\frac{\lambda_{n}\alpha^{2}\left\Vert G_{\alpha
}\right\Vert _{n}^{2n}}{\left(  n-1\right)  C^{\frac{n}{n-1}}}+O\left( \phi\right).
\end{align*}
By (\ref{20}) and (\ref{22}), we obtain
\begin{align*}
&  \lambda_{n}\left\vert u_{\varepsilon}\right\vert ^{\frac{n}{n-1}}\left(
1+\alpha\left\Vert u_{\varepsilon}\right\Vert _{{n}}^{{n}}\right)  ^{\frac
{1}{n-1}}\\
\geq&  -E+\log\kappa_{n}-\log\varepsilon^{n}-n\log\left(
1+c_{n} (\frac{F^{0}(x)}{\varepsilon
}) ^{\frac{n}{n-1}}\right)  \\
&  -\frac{\lambda_{n}\alpha^{2}\left\Vert G_{\alpha}\right\Vert _{{n}}^{2{n}}
}{\left(  n-1\right)  C^{\frac{n}{n-1}}}+\lambda_{n}A+O\left( \phi\right).
\end{align*}
Therefore

\begin{align*}
&  \int_{\mathcal{W}_{R\varepsilon}}\Phi\left(  \lambda_{n}\left\vert u_{\varepsilon
}\right\vert ^{\frac{n}{n-1}}\left(  1+\alpha\left\Vert u_{\varepsilon
}\right\Vert _{{n}}^{{n}}\right)  ^{\frac{1}{n-1}}\right)  dx\\
\geq&\exp\left\{  -E+\lambda_{n}A+\log\kappa_{n}-\log
\varepsilon^{n}-\frac{\lambda_{n}\alpha^{2}\left\Vert G_{\alpha}\right\Vert
_{{n}}^{2{n}}}{\left(  n-1\right)  C^{\frac{n}{n-1}}}+O\left( \phi\right)\right\} \\
\cdot&\int_{\mathcal{W}_{R\varepsilon}}\exp\left\{  -n\log\left(  1+c_{n} (\frac{F^{0}(x)}{\varepsilon
}){\frac{n}{n-1}}\right)  \right\} \\
\geq& c_{n}^{n-1}\varepsilon^{-n}\exp\left\{  -E+\lambda_{n}A-\frac
{\lambda_{n}\alpha^{2}\left\Vert G_{\alpha}\right\Vert _{{n}}^{2{n}}}{\left(
n-1\right)  C^{\frac{n}{n-1}}}+O\left( \phi\right)\right\}  \int_{\mathcal{W}_{R\varepsilon}}\left(
1+c_{n} (\frac{F^{0}(x)}{\varepsilon
})^{\frac{n}{n-1}}\right)
^{-n}dx\\
\geq&(n-1)\kappa_{n}\exp\left\{  -E+\lambda
_{n}A-\frac{\lambda_{n}\alpha^{2}\left\Vert G_{\alpha}\right\Vert _{{n}}^{2{n}
}}{\left(  n-1\right)  C^{\frac{n}{n-1}}}+O\left( \phi\right)\right\}  \int_{0}
^{c_{n}R^{\frac{n}{n-1}}}\frac{u^{n-2}}{\left(  1+u\right)  ^{n}}du\\
\geq& (n-1)\kappa_{n}\exp\left\{  -E+\lambda
_{n}A-\frac{\lambda_{n}\alpha^{2}\left\Vert G_{\alpha}\right\Vert _{{n}}^{2{n}
}}{\left(  n-1\right)  C^{\frac{n}{n-1}}}+O\left( \phi\right)\right\}  \left(  \frac{1}
{n-1}+o\left(  R^{\frac{-n}{n-1}}\right)  \right) \\
\geq&\kappa_{n}\exp\left\{  -E+\lambda_{n}A\right\}  \left(
1-\frac{\lambda_{n}\alpha^{2}\left\Vert G_{\alpha}\right\Vert _{{n}}^{2{n}}
}{\left(  n-1\right)  C^{\frac{n}{n-1}}}+O\left( \phi\right)\right).
\end{align*}

Also
\begin{align*}
\int_{
\mathbb{R}
^{n}\backslash \mathcal{W}_{R\varepsilon}}\Phi\left(  \lambda_{n}u_{\varepsilon}
^{\frac{n}{n-1}}\right)  dx  &\geq\frac{\lambda_{n}^{n-1}}{\left(  n-1\right)
!C^{\frac{n}{n-1}}}\int_{
\mathbb{R}
^{n}\backslash \mathcal{W}_{R\varepsilon}}\left\vert G_{\alpha}\right\vert ^{n}dx\\
&=\frac{\lambda_{n}^{n-1}\left\Vert G_{\alpha}\right\Vert _{n}^{n}+O\left(
R^{n}\varepsilon^{n}\left(  \log\left(  R\varepsilon\right)  ^{n}\right)
\right)  }{\left(  n-1\right)  !C^{\frac{n}{n-1}}},
\end{align*}
thus
\begin{align*}
&  \int_{
\mathbb{R}
^{n}}\Phi\left(  \lambda_{n}\left\vert u_{\varepsilon}\right\vert ^{\frac
{n}{n-1}}\left(  1+\alpha\left\Vert u_{\varepsilon}\right\Vert _{n}
^{n}\right)  ^{\frac{1}{n-1}}\right)  dx\\
\geq&\kappa_{n}\exp\left\{  -E+\lambda_{n}A\right\}  \left(
1-\frac{\lambda_{n}\alpha^{2}\left\Vert G_{\alpha}\right\Vert _{{n}}^{2{n}}
}{\left(  n-1\right)  C^{\frac{n}{n-1}}}+O\left( \phi\right)\right)  +\frac{\lambda_{n}
^{n-1}\left\Vert G_{\alpha}\right\Vert _{n}^{n}}{\left(  n-1\right)
!C^{\frac{n}{n-1}}}.
\end{align*}

Since $R=\log\frac{1}{\varepsilon}$, by (\ref{21}) one can obtain $R\sim C^{\frac
{n}{n-1}}$, then it is easy to verify that $\phi=o\left(  C^{\frac{-n}{n-1}
}\right)  $. Thus when $\alpha$ small enough, we have

\[
\int_{
\mathbb{R}
^{n}}\Phi\left(  \lambda_{n}\left\vert u_{\varepsilon}\right\vert ^{\frac
{n}{n-1}}\left(  1+\alpha\left\Vert u_{\varepsilon}\right\Vert _{{n}}^{{n}%
}\right)  ^{\frac{1}{n-1}}\right)
dx>\kappa_{n}\exp\left\{ -E+\lambda_{n}A\right\}.
\]
Then the proof of Theorem \ref{attain} with $\sup_{k}c_{k}=+\infty$ has been completed.
\end{proof}

{\bf Acknowledgement.} The author would like to thank the supervisor Professor Jiayu Li for his continuous guidance and encouragement.
The research was partially supported by Natural Science Foundation of China (Nos.11526212, 11721101, 11971026),
Natural Science Foundation of Anhui Province (No.1608085QA12), Natural Science Foundation of Education Committee of Anhui Province (Nos.KJ2016A506, KJ2017A454)
and Excellent Young Talents Foundation of Anhui Province (No.GXYQ2017070).

\bigskip

\end{document}